\newcommand{\factorial}[1]{#1!}
\newcommand{\econst}{\varepsilon}  
\newcommand{\bconst}{\beta}         
\newcommand{\Gind}{G}
\newcommand{\nk}{\frac{n}{2k-1}}    
\newcommand{\nkterm}[2]{\frac{#1}{2k-1}n + #2\beta n} 
\newcommand{\nknegterm}[2]{\frac{#1}{2k-1}n - (#2)\beta n}
\newtheorem{theorem}{Theorem}[section]
\newtheorem{problem}{Problem}
\newtheorem{corollary}[theorem]{Corollary}
\newtheorem{lemma}[theorem]{Lemma}
\newtheorem{definition}[theorem]{Definition}
\newtheorem{fact}[theorem]{Fact}
\newtheorem{claim}[theorem]{Claim}
\newcommand{\eps}{\varepsilon}
\newcommand{\de}{\delta}
\newcommand{\size}[1]{\lvert{#1}\rvert}
\newcommand{\N}{\mathbb{N}}
\title{Perfect tilings with the generalised triangle in $k$-graphs}
\author[1]{Weichan Liu \thanks{Supported by the Postdoctoral Fellowship Program of CPSF under Grant Number GZC20252020.}}
\author[2]{Xiangxiang Nie\thanks{Corresponding author.}} 
\author[1]{Donglei Yang}
\author[1]{Lin-Peng Zhang}
\affil[1]{\small School of Mathematics, Shandong University, Jinan, China.\\

{\tt  wcliu@sdu.edu.cn}\\{\tt dlyang@sdu.edu.cn}\\ {\tt lpzhangmath@163.com}}
\affil[2]{\small Data Science Institute, Shandong University, Jinan, China.\\

{\tt xiangxiangnie@sdu.edu.cn} }
\date{}
\begin{document}
\maketitle
\begin{abstract}
Denote by $T_k$ the generalised triangle, a $k$-uniform hypergraph on vertex set $\{1,2,\dots,2k-1\}$ with three edges $\{1,\dots,k-1,k\}$,$\{1,\dots,k-1,k+1\}$ and $\{k,k+1,\dots,2k-1\}$.
Recently, Bowtell, Kathapurkar, Morrison and Mycroft [arXiv: 2505.05606] established the exact minimum codegree threshold for perfect $T_3$-tilings in $3$-graphs. 
In this paper, we extend their result to all $k \geq 3$, determining the optimal minimum codegree threshold for perfect $T_k$-tilings in $k$-graphs. Our proof uses the lattice-based absorption method, as is usual, but develops a unified and effective approach to build transferrals for all uniformities, which is of independent interest.  
Additionally, we establish an asymptotically tight minimum codegree threshold for a rainbow variant of the problem.

\end{abstract}

\section{Introduction}
For an integer $k\ge 2$, a \textit{$k$-uniform hypergraph} (for short \textit{$k$-graph}) $H=(V,E)$ consists of a \textit{vertex set} $V$ and a collection $E$ of $k$-subsets (called \textit{edges}) of $V$. Given a $k$-graph $H = (V, E)$ and any $(k-1)$-subset $S \subseteq V$, we define the \emph{codegree} of $S$ as:
\[
d_H(S) := \left\lvert 
\left\{ v \in V \setminus S \mid S \cup \{v\} \in E(H) 
\right\} 
\right\rvert.
\]
The \emph{minimum codegree} of $H$ is
$\delta_{k-1}(H) := \min_{\substack{S \in \binom{V}{k-1} }} d_H(S).$
For convenience, we write $\delta(H)$ and $d(S)$ for  $\delta_{k-1}(H)$ and $d_{H} (S)$ respectively. When $k=2$, that is, when $H$ is a simple graph, the minimum codegree reduces to the minimum degree $\delta(H)$ as usual.

Let $H$ and $F$ be a $k$-graph on $n$ vertices and a $k$-graph on $s$ vertices, respectively. An \emph{$F$-tiling} in $H$ is a family of vertex-disjoint copies of $F$ in $H$. We call an $F$-tiling in $H$ \emph{perfect} if it covers all the vertices of $H$.
Note that for the case when $F$ is a single edge, a perfect $F$-tiling is exactly a perfect matching. 

Recall that the decision problem asking whether there exists a perfect matching in $3$-partite $3$-graphs is one of the first $21$ NP-Complete problems. Over the last few decades, the question of determining the minimum codegree condition for the existence of a perfect 
$F$-tiling constitutes one of the most fundamental objectives in extremal graph theory.
\begin{problem}\label{problem}
Let $F$ be a $k$-graph on $s$ vertices. What is the minimum codegree condition that guarantees a perfect $F$-tiling in a $k$-graph $H$ on $n$ vertices, where $n \in s\mathbb{N}$?
\end{problem}

The investigation of Problem~\ref{problem} in graphs originated in Dirac's seminal theorem on Hamiltonian cycles~\cite{Dirac}, which directly yields the sharp minimum degree condition $\delta(H) \geq \frac{n}{2}$ ensuring perfect $K_2$-tilings in graphs $H$. Subsequent breakthroughs establish sharp minimum degree thresholds for clique tilings: Corr\'{a}di and Hajnal \cite{CH} proved that $\delta(H) \geq \frac{2n}{3}$ suffices for perfect $K_3$-tilings, which was later extended to arbitrary cliques by Hajnal and Szemerédi \cite{HSz} who showed $\delta(H) \geq \left(1 - \frac{1}{t}\right)n$ guarantees perfect $K_t$-tilings.  For general graphs $F$, a significant breakthrough was achieved by Alon and Yuster~\cite{AY} who showed that $\delta(H) \geq \left(1 - \frac{1}{\chi(F)}+o(1)\right)n$ ensures perfect $F$-tilings, which was subsequently reinforced by Koml\'{o}s~\cite{Komlos}, Koml\'{o}s, S\'{a}rk\"{o}zy and Szemer\'{e}di~\cite{KSSz}. Notably, K\"{u}hn and Osthus~\cite{KO} characterize the minimum degree threshold up to an additive constant, establishing the current state of the art.

For graphs, comprehensive solutions to Problem~\ref{problem} are nowadays well-established. However, in contrast, the progress for $k$-graphs with $k \geq 3$ regarding Problem~\ref{problem} remains exceedingly scarce. 
Currently, substantial findings exist only in the particular case when $F$ is a $k$-partite $k$-graph. For perfect matchings (where $F$ is a single edge), R\"{o}dl, Ruci\'{n}ski and Szemer\'{edi}~\cite{RRS} established the minimum codegree threshold $\delta(H) \geq n/2 - c$ with $c \in \{1/2, 1, 3/2, 2\}$ determined by the parities of $k$ and $n$. This improved on a sequence of increasingly stronger approximations established earlier by R\"{o}dl, Ruci\'{n}ski and Szemer\'{edi}~\cite{RRS2}, K\"{u}hn and Osthus~\cite{ko_match}, and R\"{o}dl, Rucinski, Schacht and Szemer\'{edi}~\cite{rrss}.
For an arbitrary $k$-partite $k$-graph $F$, Mycroft~\cite{Mycroft} established a sufficient minimum codegree condition for the existence of a perfect $F$-tiling and proved that this condition is asymptotically best possible for a broad class of $k$-partite $k$-graphs including all complete $k$-partite $k$-graphs. 
In particular, Mycroft provided an asymptotic solution to a question of R\"{o}dl and Ruci\'{n}ski~\cite{RR} on a minimum codegree condition for the existence of a perfect $F$-tiling when $F$ is a loose cycle.
When $F$ is the 4-vertex 3-graph with two edges, this case was resolved earlier by K\"{u}hn and Osthus~\cite{ko_loose}. Subsequently, Czygrinow, DeBiasio, and Nagle~\cite{cdn} established the exact minimum codegree threshold for large $n$.
For the $(k+1)$-vertex $k$-graph with two edges, Gao, Han, and Zhao~\cite{GHZ} 
determined the exact minimum codegree threshold. 
Moreover, they strengthened the sublinear error term in Mycroft's general result 
and established a sharp minimum codegree threshold for $F$ being a loose cycle.

The theory for non-$k$-partite $k$-graphs remains significantly less developed. Consider the case $F = K_4^3$, Lo and Markstr\"{o}m~\cite{LM} established the asymptotically optimal minimum codegree threshold $\delta(H) \geq 3n/4 + o(n)$ for perfect $K_4^3$-tilings. Keevash and Mycroft~\cite{keevash15}  determined the exact threshold for large $n$, proving $\delta(H) \geq 3n/4 - 2$ when $8 \mid n$ and $\delta(H) \geq 3n/4 - 1$ otherwise. These advancements superseded earlier sufficient conditions by Czygrinow and Nagle~\cite{cn} and Pikhurko~\cite{pikhurko}.
For the $4$-vertex $3$-graph $F = K_4^{3-}$, Lo and Markstr\"{o}m~\cite{lm2} first established an asymptotically tight bound $\delta(H) \geq n/2 + o(n)$, which was later strengthened by Han, Lo, Treglown and Zhao~\cite{HLTZ} who established the exact threshold $\delta(H) \geq n/2 - 1$.
For higher uniformities, Han, Lo and Sanhueza-Matamala~\cite{hlsm} characterized asymptotically optimal minimum codegree thresholds for $F$-tilings where $k \geq 4$ is even and $F$ represents a long tight cycle satisfying specific divisibility constraints.

Denote by $T_k$ the generalised triangle, a $k$-uniform hypergraph on vertex set 
$\{1,2,\dots,\\2k-1\}$ with three edges
$\{1,\dots,k-1,k\}$,$\{1,\dots,k-1,k+1\}$ and $\{k,k+1,\dots,2k-1\}$.
In the case \(k=3\), the generalised triangle \(T_3\) plays a fundamental role in extremal graph theory, motivating diverse research directions. Bowtell, Kathapurkar, Morrison and Mycroft~\cite{BKMM} provide a systematic survey of results concerning this fundamental object. We refer readers to their comprehensive survey for further exploration. In \cite{BKMM}, they established the exact minimum codegree threshold for perfect $T_3$-tilings in $3$-graphs. 
In this paper, we extend their result to every integer $k \geq 3$, determining the optimal minimum codegree threshold for perfect $T_k$-tilings in $k$-graphs. This advancement, as an analogue of
the Corr\'{a}di--Hajnal theorem in $k$-graphs, provides strong support for resolving perfect $F$-tilings in the case when $F$ is a non-$k$-partite $k$-graph.

\begin{theorem}[Main result]\label{thm:main}
Let $k\ge 3$ be an integer. There exists $n_0 \in \mathbb{N}$ such that for every $n \geq n_0$ with $(2k-1) \mid n$, every $k$-graph $H$ on $n$ vertices with $\delta(H) \geq \frac{2n}{2k-1}$ admits a perfect $T_k$-tiling.
\end{theorem}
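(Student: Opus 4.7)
The plan is to apply the lattice-based absorption method, splitting the argument into extremal and non-extremal cases based on whether $H$ is structurally close to the natural extremal construction achieving the $\frac{2n}{2k-1}$ bound. Fix a hierarchy of constants $1/n_0 \ll \eps \ll \bconst \ll 1/k$. In the \emph{extremal case}, where $H$ is $\bconst$-close to the extremal configuration, the tiling is constructed by a direct structural argument that exploits the near-partition of $V(H)$ inherited from the extremal example, together with the codegree condition to route a bounded number of ``exceptional'' vertices.

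For the non-extremal case, the proof has three standard ingredients. First, an \textbf{absorbing lemma}: find a reservoir $A \subseteq V(H)$ of size $\Theta(\bconst n)$ such that for every $U \subseteq V(H) \setminus A$ with $|U| \leq \eps n$ and $(2k-1) \mid |A \cup U|$, the induced subgraph $H[A \cup U]$ admits a perfect $T_k$-tiling. Second, a \textbf{near-perfect tiling lemma}: find a $T_k$-tiling of $V(H) \setminus A$ covering all but at most $\eps n$ vertices; I would derive this via the weak hypergraph regularity method, or through a direct fractional-to-integral packing argument leveraging the codegree condition. Third, apply the absorbing property of $A$ to swallow the leftover and complete the tiling.

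The crux is the absorbing lemma, and within it the construction of \emph{transferrals}. For a pair $(u,v)$ of vertices, a $(u,v)$-transferral is a small vertex set $S$ together with two $T_k$-tilings, one spanning $S \cup \{u\}$ and the other spanning $S \cup \{v\}$. Following the Keevash--Mycroft vertex-type lattice paradigm, the absorber $A$ is assembled by probabilistic gluing of many such transferrals, which reduces the problem to showing that the lattice generated by $T_k$-copies is as rich as possible under the codegree hypothesis. The novelty of the approach, stressed in the introduction, is a unified construction of transferrals that works for all $k \geq 3$, whereas prior arguments relied on case-specific constructions for small $k$. I would exploit the asymmetry of $T_k$ between its core $\{1,\dots,k-1\}$, its apex pair $\{k,k+1\}$ (the two vertices that differ between the two edges sharing the core), and its tail $\{k+2,\dots,2k-1\}$ to build short chains of $T_k$-copies along which a vertex role can be swapped; the codegree threshold $\frac{2n}{2k-1}$ in the non-extremal regime should supply exactly enough flexibility to connect any pair of vertices by such a chain. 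This uniform transferral construction is where the bulk of the technical effort will concentrate, and it is the step I expect to be the principal obstacle.
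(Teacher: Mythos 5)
Your proposal takes essentially the same route as the paper: the extremal/non-extremal dichotomy, a Nenadov--Pehova/Montgomery-style absorber built from the Keevash--Mycroft lattice/reachability framework, a near-perfect cover obtained by a fractional-to-integral packing argument (this is the option the paper chooses, via Farkas' lemma and Pippenger--Spencer), and the recognition that a uniform transferral construction across all $k$ is the technical crux. One refinement worth noting: the paper's absorbing lemma in fact works under the much weaker hypothesis $\delta(H)\ge\alpha n$ for arbitrary $\alpha>0$, not the full threshold $\tfrac{2n}{2k-1}$; decoupling the transferral/closedness machinery from the sharp codegree bound is precisely what the authors flag as the key technical advance over the $k=3$ case.
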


To demonstrate the optimality of the minimum codegree condition in Theorem \ref{thm:main}, we introduce the following extremal construction. 
Let \(n\in (2k-1)\N\). Let $A$ and $B$ be two disjoint vertex sets of size 
$\frac{2n}{2k-1} - 1$ and $\frac{(2k-3)n}{2k-1} + 1$ respectively.
Define a \(k\)-graph \(H_\mathrm{ext}\) on \(A \cup B\) where a \(k\)-set is an edge if and only if it intersects \(A\). 
This yields
\[
\delta_{k-1}(H_\mathrm{ext}) = |A| = \frac{2n}{2k-1} - 1.
\]
Critically, every edge of \(H_\mathrm{ext}\) contains at least one vertex from \(A\), 
while no vertex of \(T_k\) lies in all its edges. 
Consequently, every copy of \(T_k\) in \(H_\mathrm{ext}\) must contain at least two vertices from \(A\). 
It follows that any \(T_k\)-tiling $\mathcal{T}$ satisfies
\[
|\mathcal{T}| \leq \frac{|A|}{2} = \frac{n}{2k-1} - \frac{1}{2} < \frac{n}{2k-1},
\]
proving that \(H_\mathrm{ext}\) admits no perfect \(T_k\)-tiling.

\subsection{Rainbow tilings}

We also investigate minimum codegree conditions for rainbow $T_k$-tilings in families of $k$-graphs. Let $V$ be a set of $n$ vertices and $\mathcal{H} = \{H_1, \dots, H_{3n/(2k-1)}\}$ be a family of $k$-graphs with common vertex set $V$. A \emph{perfect rainbow $T_k$-tiling} in $\mathcal{H}$ is a perfect $T_k$-tiling in the multiset union $\bigcup_{i=1}^{3n/(2k-1)} H_i$ consisting of edges $e_1, \dots, e_{3n/(2k-1)}$ where $e_i \in E(H_i)$ for each $i$. 

In Section \ref{sec:rainbow}, we combine Theorem~\ref{thm:main} with a theorem given by Lang~\cite{Lang} to establish the following rainbow version.

\begin{theorem}\label{thm:rainbow}
Let $k\ge 3$ be an integer. For every $\varepsilon > 0$, there exists $n_0 \in \mathbb{N}$ such that the following holds for every $n \geq n_0$ with $(2k-1) \mid n$. Let $V$ be a set of $n$ vertices and $\mathcal{H} = \{H_1, \dots, H_{3n/(2k-1)}\}$ be a family of $k$-graphs with common vertex set $V$. If $\delta(H_i) \geq \left( \frac{2}{2k-1} + \varepsilon \right)n$ for each $i\in [\frac{3n}{2k-1}]$, then $\mathcal{H}$ contains a perfect rainbow $T_k$-tiling.
\end{theorem}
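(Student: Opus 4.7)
The plan is to derive Theorem~\ref{thm:rainbow} as a direct application of a general rainbow transference theorem of Lang~\cite{Lang} combined with Theorem~\ref{thm:main}. Lang's framework says, roughly, that any Dirac-type minimum codegree threshold for perfect $F$-tilings in $k$-graphs admits a rainbow analogue, at the cost only of an additive $\eps n$ slack in the codegree hypothesis. More precisely, if the perfect $F$-tiling threshold is $c\,n$, then for every $\eps>0$ and every sufficiently large $n$ with $|V(F)| \mid n$, any family of $|E(F)|\,n/|V(F)|$ $k$-graphs on a common vertex set, each with minimum codegree at least $(c+\eps)n$, contains a perfect rainbow $F$-tiling.

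Applying this with $F = T_k$, we have $|V(T_k)| = 2k-1$ and $|E(T_k)| = 3$, so the family size is exactly $3n/(2k-1)$, matching the hypothesis of Theorem~\ref{thm:rainbow}. Theorem~\ref{thm:main} supplies the underlying tiling threshold $c = \frac{2}{2k-1}$, and the assumed bound $\delta(H_i) \geq \left(\frac{2}{2k-1} + \eps\right)n$ is precisely $(c+\eps)n$. Plugging these parameters into Lang's theorem produces the desired perfect rainbow $T_k$-tiling in $\mathcal{H}$.

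The main obstacle is essentially a matter of formal verification rather than new combinatorial work. Lang's black box typically requires the underlying tiling result to satisfy mild auxiliary properties — for instance, an almost-perfect tiling version, robustness under the removal of an $o(n)$-size vertex set, or an explicit absorber lemma with the expected flexibility. Because the proof of Theorem~\ref{thm:main} proceeds via the lattice-based absorption method, these auxiliary features arise automatically from its intermediate steps (an absorbing set, an almost-covering step, and a divisibility-barrier analysis), so the deduction of Theorem~\ref{thm:rainbow} reduces to identifying the appropriate intermediate lemmas and confirming that they meet Lang's hypotheses in the precise form required for each uniformity $k \geq 3$.
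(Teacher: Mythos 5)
Your high-level plan -- reduce Theorem~\ref{thm:rainbow} to Theorem~\ref{thm:main} via Lang's transference machinery -- is the same route the paper takes, but your description of what Lang's theorem actually requires is off, and the omission is not cosmetic. Lang's Theorem~5.11 (Theorem~\ref{thm:lang} in the paper) states $\delta^r_F = \max\{\delta^c_F, \delta_F\}$, where $\delta^c_F$ is the \emph{color covering threshold}: the least codegree fraction guaranteeing that any pair $(H_1,H_2)$ admits an injection $\phi\colon V(F)\to V$ mapping one edge into $E(H_1)$ and all remaining edges into $E(H_2)$. So the rainbow threshold equals the tiling threshold only if one additionally shows $\delta^c_F \le \delta_F$; this is a genuine extra condition, not a ``mild auxiliary property'' that falls out of the absorption method, and it has nothing to do with almost-perfect tilings or robustness of absorbers.

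Your proposal never mentions $\delta^c_{T_k}$ and hence leaves a gap: without bounding it, $\max\{\delta^c_{T_k},\delta_{T_k}\}$ could exceed $\tfrac{2}{2k-1}$. The paper closes this gap with a short direct argument showing $\delta^c_{T_k}=0$: given $H_1,H_2$ with $\delta(H_i)\ge k$, fix an edge $v_1\cdots v_k\in E(H_1)$, use the codegree of $H_2$ to extend $v_1\cdots v_{k-1}$ by a new vertex $v_{k+1}$ with $v_1\cdots v_{k-1}v_{k+1}\in E(H_2)$, and then extend $v_kv_{k+1}$ greedily to an edge $v_kv_{k+1}\cdots v_{2k-1}\in E(H_2)$; these three edges form a copy of $T_k$ with one edge in $H_1$ and two in $H_2$. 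That step is easy but indispensable. Once you add it, your deduction becomes correct and matches the paper's.
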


The minimum codegree bound in Theorem~\ref{thm:rainbow} is asymptotically tight. This is seen by setting $H_1 = \cdots = H_{3n/(2k-1)} = H_{\mathrm{ext}}$, where $H_{\mathrm{ext}}$ is the extremal construction after Theorem~\ref{thm:main}. 

\subsection{Proof strategy}
 By modifying and extending the methods in~\cite{BKMM}, our proof of Theorem~\ref{thm:main} distinguishes an extremal case (when $H$ approximates the configuration $H_\mathrm{ext}$) from a non-extremal case. 
The \emph{density} of a $k$-graph $H$ on $n$ vertices is defined as $d(H) := e(H) / \binom{n}{k}$, where $e(H)$ denotes the number of edges in $H$. For a set $S \subseteq V(H)$, the \emph{subgraph of $H$ induced by $S$}, denoted by $H[S]$, is the $k$-graph with vertex set $S$ consisting of all edges $e \in E(H)$ such that $e \subseteq S$.

\begin{definition}\rm
Let $H$ be a $k$-graph on $n$ vertices. For $\gamma > 0$, we say $H$ is \textit{$\gamma$-extremal} if there exists $S \subseteq V(H)$ with
$|S| = \lfloor \frac{2k-3}{2k-1} n \rfloor$
such that $H[S]$ has density at most $\gamma$.
\end{definition}

The following lemma establishes that Theorem~\ref{thm:main} holds in the extremal case. The proof, presented in Section~\ref{sec:extremal}, utilizes an approach specifically designed for the generalised triangle $T_k$; the pivotal step involves finding a perfect matching in an appropriately constructed auxiliary graph.
\begin{lemma}[Extremal case] \label{lem:extremal}
Let $k$ be an integer with $k\geq 3$, there exist $\gamma > 0$ and $n_0 \in \mathbb{N}$ for which the following holds. Let $H$ be a $k$-graph on $n \geq n_0$ vertices with $\delta(H) \geq \frac{2n}{2k-1}$ and $(2k-1) \mid n$. If $H$ is $\gamma$-extremal, then $H$ contains a perfect $T_k$-tiling.
\end{lemma}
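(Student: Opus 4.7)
The plan is to convert the $\gamma$-extremal structure into a clean bipartition that mirrors the extremal example $H_\mathrm{ext}$, reduce the tiling problem to a pair-matching on the small side, and finally complete each matched pair to a copy of $T_k$ through the large side. First I set $B_0 := S$ and $A_0 := V(H)\setminus S$, so $|B_0| = \frac{(2k-3)n}{2k-1}$, $|A_0| = \frac{2n}{2k-1}$, and $e(H[B_0]) \le \gamma\binom{|B_0|}{k}$. Fix constants $\gamma \ll \alpha \ll \beta \ll 1$. Call $v \in B_0$ \emph{dense} if $v$ lies in more than $\alpha\binom{|B_0|-1}{k-1}$ edges of $H[B_0]$, and call $u \in A_0$ \emph{sparse} if fewer than $(1-\alpha)\binom{|B_0|}{k-1}$ of the $(k-1)$-subsets $L \subseteq B_0$ satisfy $L\cup\{u\} \in E(H)$. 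Standard double counts supported by the codegree hypothesis yield at most $O(\gamma/\alpha)n$ dense and at most $O(\gamma/\alpha)n$ sparse vertices; let $W$ be their union. I then greedily cover $W$ by vertex-disjoint copies of $T_k$, permitting each such copy to use two or three vertices of $A_0$ as needed so that after deletion the leftover bipartition $A' \sqcup B'$ on $n'$ vertices preserves the ratios $|A'| = \frac{2n'}{2k-1}$, $|B'| = \frac{(2k-3)n'}{2k-1}$. These absorbing copies exist at each step by the minimum codegree condition and the near-completeness of $H$ outside $B_0$.

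Next I would build an auxiliary graph $G$ on $A'$ by placing an edge between $a, b \in A'$ whenever the number of pairs $(L,R)$ with $L \in \binom{B'}{k-1}$, $R \in \binom{B' \setminus L}{k-2}$, and $L\cup\{a\}, L\cup\{b\}, \{a,b\}\cup R \in E(H)$ exceeds $\beta\binom{|B'|}{k-1}\binom{|B'|-k+1}{k-2}$. The typicality of $A'$- and $B'$-vertices (non-sparse and non-dense, respectively, as secured in the previous step) forces $\delta(G) \ge (1-o(1))|A'|$, in particular $\delta(G) \ge |A'|/2$, so Dirac's theorem produces a perfect matching $M$ of $G$. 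This matching pairs $A'$ into $\frac{n'}{2k-1}$ central pairs $\{a_i, b_i\}$, each meant to play the template roles of vertices $k$ and $k+1$ in a copy of $T_k$.

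The hard part will be to equip each central pair $\{a_i, b_i\}$ with disjoint $L_i, R_i \subseteq B'$ so that the $(2k-3)$-sets $L_i \cup R_i$ exactly partition $B'$. A naive greedy selection of $(L_i, R_i)$ exhausts the $B'$-budget in the final few rounds and leaves no slack for corrections; this is the main obstacle of the proof. I would resolve it via a buffer-and-Hall device: randomly reserve a small buffer $B^\mathrm{buf} \subseteq B'$ together with a matched reserve $M^\mathrm{res} \subseteq M$ satisfying $|B^\mathrm{buf}| = (2k-3)|M^\mathrm{res}|$; greedily select $(L_i, R_i)$ for the pairs in $M \setminus M^\mathrm{res}$ using only vertices of $B' \setminus B^\mathrm{buf}$, where typicality guarantees $\Omega(|B'|^{2k-3})$ compatible extensions at each step; and conclude with a Hall-type bipartite matching between $M^\mathrm{res}$ and a suitable partition of $B^\mathrm{buf}$ into $(2k-3)$-blocks, the Hall condition following from the large number of compatible $(L,R)$-splits of each block with each central pair.
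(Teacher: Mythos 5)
Your overall scaffolding — identify an almost-complete bipartition $A' \sqcup B'$ in the ratio $(2k-3):2$, clean up a small set of atypical vertices with a few extra $T_k$-copies, and reduce the remaining tiling to an auxiliary matching problem — is the same as the paper's. The paper then builds a single $(2k-1)$-uniform auxiliary hypergraph $J$ whose hyperedges are $(2k-1)$-sets meeting the large side in $2k-3$ vertices and the small side in $2$ vertices that support a copy of $T_k$, and finds a perfect matching in $J$ via the Daykin--H\"aggkvist theorem (Corollary~\ref{auxiliarymatching}), which handles the pairing of the small side and the assignment from the large side simultaneously. You instead split this into two stages: a Dirac matching on a $2$-graph to pair the small side, followed by a greedy-plus-Hall scheme to assign disjoint $(2k-3)$-sets from the large side to each pair. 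This two-stage decomposition does not actually reduce the difficulty: the residual assignment subproblem is itself a $(2k-2)$-uniform perfect matching problem (pairs on one side, $B'$-vertices on the other), so the Dirac step has not circumvented the need for a hypergraph matching theorem.

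The concrete gap is in the second stage. You set $|B^{\mathrm{buf}}| = (2k-3)|M^{\mathrm{res}}|$ and run the greedy phase over all of $M \setminus M^{\mathrm{res}}$ using only $B' \setminus B^{\mathrm{buf}}$; since $|B' \setminus B^{\mathrm{buf}}| = (2k-3)|M \setminus M^{\mathrm{res}}|$ exactly, the last greedy step has only $2k-3$ vertices available and the claimed $\Omega(|B'|^{2k-3})$ compatible extensions fail — you are back to the zero-slack situation you flagged as the main obstacle, just one layer down. The closing Hall step is also underspecified: a ``suitable partition of $B^{\mathrm{buf}}$ into $(2k-3)$-blocks'' is not given, and an arbitrary partition need not put every pair in $M^{\mathrm{res}}$ in Hall position with enough blocks; choosing the partition is where the combinatorics must be done, and making this rigorous would essentially reproduce the Daykin--H\"aggkvist argument. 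A smaller concern: the cleanup phase that absorbs the dense/sparse vertex set $W$ by $T_k$-copies using ``two or three'' $A_0$-vertices is asserted without the bookkeeping that guarantees the resulting $|A'|:|B'| = (2k-3):2$ ratio and divisibility are achieved simultaneously, whereas the paper keeps the sparse $A_0$-vertices inside the large side and uses a matching of exactly the required size $|X|$, each edge extended by a single $B$-vertex, making the ratio correction transparent. I would suggest either applying Daykin--H\"aggkvist directly to a $(2k-1)$-partite auxiliary hypergraph as in the paper, or, if you want to keep the Dirac pairing step, recognizing that the assignment stage needs its own complete matching theorem (for instance Daykin--H\"aggkvist again on a $(2k-2)$-partite hypergraph) rather than a sketched greedy-plus-Hall.
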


It remains to prove Theorem~\ref{thm:main} in the non-extremal case.
Our proof employs the absorption method, an indispensable technique for spanning structures in discrete mathematics popularized by R\"{o}dl, Ruci\'{n}ski, and Szemer\'{e}di~\cite{rodl09}. 
It comprises two main ingredients. The first is the following absorbing lemma, guaranteeing the existence of a small $T_k$-tiling that can incorporate any sufficiently small vertex set (which is called an \emph{absorbing} set, see Definition~\ref{def:absorbing set simple}). Our first task is to prove the existence of an absorbing set in any $k$-graph with an arbitraily small linear codegree (see Lemma~\ref{lem:absorbing}), which takes up the bulk of our work. In constrast, the previous construction of absorbing set from Bowtell, Kathapurkar, Morrison and Mycroft~\cite{BKMM} in $3$-graphs heavily relies on a much stronger codegree condition $\de_2(H)\ge \frac{1}{3}n+o(n)$.

We will use the following notion of absorbing set as used in the graph case for example in~\cite{HMWY,NP}.
\begin{definition} \label{def:absorbing set simple}\rm
Let $H$ be an $n$-vertex $k$-graph and $F$ be an $s$-vertex $k$-graph.
A subset $A\subseteq V(H)$ is an $(F,\xi)$-\emph{absorbing set} for some constant $\xi>0$ if for any subset $U\subseteq V(H)\setminus A$ of size at most $\xi n$ such that $s$ divides $|A\cup U|$, $H[A\cup U]$ contains a perfect $F$-tiling.
\end{definition}

Here, we have the following key lemma which ensures the existence of an absorbing set.
\begin{lemma}[Absorbing lemma] \label{lem:absorbing} 
For any $\gamma,\alpha>0$, there exists $\xi>0$ such that the following holds for all sufficiently large $n$. If $H$ is an $n$-vertex $k$-graph with $\delta(H)\ge \alpha n$, then $H$ contains a $(T_k,\xi)$-absorbing set of size at most $\gamma n$. 
\end{lemma}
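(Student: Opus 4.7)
The plan is to use the standard absorption paradigm: first establish a local-absorber structure, and then combine local absorbers into a global absorbing set via a randomized argument.

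First, for each $(2k-1)$-subset $S \subseteq V(H)$, I would define an \emph{$S$-absorber} as a bounded-size vertex set $X \subseteq V(H) \setminus S$, of size $m = m(k)$, such that both $H[X]$ and $H[X \cup S]$ admit perfect $T_k$-tilings. Under the weak codegree assumption $\delta(H) \geq \alpha n$, we cannot expect every such $S$ to have many absorbers; instead, the aim is to partition $V(H)$ into a bounded number of classes so that every $(2k-1)$-set $S$ lying inside a single class admits $\Omega(n^m)$ absorbers. The local construction exploits the mixed structure of $T_k$: two edges $e_1, e_2$ share the $(k-1)$-subset $\{1,\dots,k-1\}$, while the third edge $e_3$ is attached as a ``pendant'' through the vertices $k$ and $k+1$. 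This flexibility lets one assemble small absorbers by iteratively extending partial $T_k$-tilings using the codegree condition to fill in the $(k-1)$-type shadow.

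The heart of the proof is the construction of \emph{transferrals} to handle sets straddling class boundaries. Given $u, v \in V(H)$, a $(u, v)$-transferral is a bounded-size vertex set $X$, disjoint from $\{u, v\}$, such that both $H[X \cup \{u\}]$ and $H[X \cup \{v\}]$ admit perfect $T_k$-tilings. The key claim is that, under $\delta(H) \geq \alpha n$, this transferral relation partitions $V(H)$ into at most $C = C(k, \alpha)$ equivalence classes, and that every same-class pair $(u, v)$ admits $\Omega(n^{|X|-1})$ transferrals. The paper advertises this step as its main technical novelty --- ``a unified and effective approach to build transferrals for all uniformities'' --- which I read as an inductive or recursive template producing transferrals from a common scheme across all $k \geq 3$.

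With local absorbers and transferrals established, I would build the absorbing set $A$ by a randomized selection: pick each vertex of $V(H)$ independently with probability $p = \gamma / (2k-1)$, and apply Chernoff-type concentration to ensure that, with positive probability, $A$ contains a linear number of absorbers and transferrals of each required type while satisfying $|A| \leq \gamma n$. A greedy argument then absorbs any $U$ of size at most $\xi n$ with the correct divisibility by distributing its vertices among the absorbers in $A$, using transferrals to correct residue mismatches across classes. The principal obstacle is the transferral construction under only $\delta(H) \geq \alpha n$: prior work (e.g., BKMM for $k=3$) leans on the much stronger bound $\delta(H) \geq \tfrac{1}{3}n + o(n)$, which makes transferrals almost immediate. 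Extending this to arbitrary $k$ and arbitrarily small $\alpha > 0$ is where I expect the bulk of the technical effort, and it is precisely where the ``tight-plus-pendant'' structure of $T_k$ must be exploited in a uniform manner.
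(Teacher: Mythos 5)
Your high-level architecture is the right one: define small absorbers, introduce a transferral/reachability relation to partition $V(H)$ into boundedly many classes, and merge those classes. You also correctly diagnose where the difficulty lies, namely in building transferrals under the arbitrarily weak hypothesis $\delta(H)\ge\alpha n$. But there is a genuine gap in the final assembly step, and the merging step is left entirely blank.

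The gap is your proposal to build the absorbing set by ``pick each vertex independently with probability $p$, apply Chernoff, then greedily absorb.'' That is the classical R\"odl--Ruci\'nski--Szemer\'edi scheme, and it requires that each $(2k-1)$-set $S$ admit $\Omega(n^m)$ absorbers (where $m$ is the fixed absorber size): only then will a random family of $\Theta(n)$ candidate $m$-sets survive a union bound over all $\binom{n}{2k-1}$ choices of $S$. However, the reachability-and-closure machinery available under $\delta(H)\ge\alpha n$ yields only $\Omega(n)$ \emph{pairwise vertex-disjoint} absorbers per $S$ (this is the content of Lemma~\ref{close}, which is all that the $(T_k,\beta n,t)$-closedness of $V(H)$ delivers). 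There is no route in your sketch to the stronger $\Omega(n^m)$ count, and it is not automatic: building an absorber by concatenating $q$ connectors gives roughly $n^q$ choices, not $n^{|A_S|}$, and $q$ is much smaller than $|A_S|$. The paper bridges this gap by invoking the Montgomery/Nenadov--Pehova template (Lemma~\ref{bip-temp}, proved via Montgomery's robust sparse bipartite gadget in Appendix~A), which is precisely designed to produce an $(F,\xi)$-absorbing set of size $\gamma n$ from the weaker input of $\Omega(n)$ vertex-disjoint absorbers per $s$-set. Without this gadget (or a stronger absorber count that you would need to prove from scratch), your plan does not close.

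A second, smaller issue: you say transferrals are used ``to correct residue mismatches across classes.'' In fact the paper takes a different final shape: it merges all parts into a single class (via Lemmas~\ref{closed}--\ref{kmorethan4}, which rely on a blow-up lemma and the monochromatic-versus-rainbow dichotomy in Lemma~\ref{sametpart}), so that $V(H)$ itself becomes $(T_k,\cdot,\cdot)$-closed and every $(2k-1)$-set $S$ then has linearly many vertex-disjoint absorbers without any residue bookkeeping. You acknowledge that the merging argument is ``the bulk of the technical effort,'' but you give no indication of how it should proceed, and the paper's path there (the stability lemma on rainbow tight $2$-paths, the index-vector arithmetic, and the iterated application of Lemma~\ref{transferral}) is not something one can wave at.
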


The second component for the non-extremal case is the following almost cover lemma: under a slightly weaker codegree condition than that in Theorem~\ref{thm:main}, any non-extremal $k$-graph $H$ admits a $T_k$-tiling covering almost all vertices. We shall prove the following lemma in Section~\ref{sec:tiling}.
\begin{lemma}[Almost cover lemma] \label{lem:tiling}
Suppose that $1/n \ll \alpha, \eta \ll \gamma$, and that $H$ is a $k$-graph on $n$ vertices. If $\delta(H) \geq \frac{2n}{2k-1} - \alpha n$,
then either $H$ admits a $T_k$-tiling covering at least $(1-\eta) n$ vertices of $H$, or $H$ is $\gamma$-extremal.
\end{lemma}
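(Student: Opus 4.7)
The plan is to apply a hypergraph regularity lemma, reduce the problem to finding a near-perfect $T_k$-tiling in the reduced $k$-graph $R$, and then transfer this back through the dense regular tuples. With hierarchy $1/n \ll \eps \ll d \ll \alpha, \eta \ll \gamma$, I would take a (weak) regular partition of $V(H)$ into clusters $V_0 \cup V_1 \cup \cdots \cup V_t$ of common size $m$ (with $|V_0| \leq \eps n$) and form the reduced $k$-graph $R$ on $[t]$ whose edges correspond to $\eps$-regular $k$-tuples of density at least $d$. Standard cleaning (moving the vertices witnessing low-codegree $(k-1)$-tuples into the exceptional set) ensures $\delta(R) \geq \left(\frac{2}{2k-1} - 2\alpha\right)t$. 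A direct unpacking of the extremality definition shows that if $R$ is $\gamma_1$-extremal for a suitable $\gamma_1 = \gamma_1(\gamma, d)$, then $H$ itself is $\gamma$-extremal; so I may assume throughout that $R$ is not $\gamma_1$-extremal.

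The heart of the proof is finding a $T_k$-tiling $\mathcal{T}$ in $R$ covering all but at most $\eta t/2$ vertices. Take $\mathcal{T}$ to be a maximum $T_k$-tiling in $R$, set $W := [t] \setminus V(\mathcal{T})$, and assume toward contradiction that $|W| \geq \eta t/2$. Exploiting the structure of $T_k$ (two edges sharing a common $(k-1)$-set together with a disjoint third edge), I would construct local augmentations: identify a bounded number of tiles in $\mathcal{T}$ that, together with a small set of vertices of $W$, can be rearranged into strictly more copies of $T_k$ than were removed, contradicting maximality. If no such augmentation exists, a double-counting argument (or equivalently LP duality for the fractional $T_k$-tiling polytope) forces most pairs of non-edges to concentrate on a set $S \subseteq V(R)$ of size roughly $\frac{2k-3}{2k-1}t$ inducing density at most $\gamma_1$, contradicting the non-extremality of $R$.

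Finally, for each $T = \{i_1, \ldots, i_{2k-1}\} \in \mathcal{T}$, the $\eps$-regularity and density of the corresponding $k$-tuples in $H$ let me greedily embed $(1 - o(1))m$ vertex-disjoint copies of $T_k$ into $V_{i_1} \cup \cdots \cup V_{i_{2k-1}}$ by a standard blow-up/greedy argument that matches the three edges of $T_k$ one at a time inside the regular clusters. Combined across all $T \in \mathcal{T}$, this produces a $T_k$-tiling of $H$ covering all but at most $\eta n$ vertices, as required. The main obstacle I anticipate is the augmentation step in $R$: producing rotations that work under only a slightly sub-critical codegree and precisely identifying the failure mode as the appearance of a large sparse set. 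Because the integer and fractional codegree thresholds for $T_k$-tilings coincide at $\frac{2}{2k-1}$, the non-extremality hypothesis is genuinely needed to close this gap, and the argument is specific to the three-edge structure of $T_k$ rather than a consequence of generic almost-cover machinery.
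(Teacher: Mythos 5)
Your proposed route is fundamentally different from the paper's. The paper does not touch regularity at all: it proves the almost cover lemma by first establishing a perfect \emph{fractional} $T_k$-tiling whose weight on every pair is $O(1/n)$ (Lemma~\ref{frac_tiling_with_bound}), then encodes this as a fractional matching in the auxiliary $(2k-1)$-graph whose edges are the $T_k$-supporting $(2k-1)$-sets, and finally invokes the Pippenger--Spencer theorem (Lemma~\ref{frac_to_almost}, as restated in~\cite{BKMM}) to convert this into an integer matching covering $(1-\eta)n$ vertices. The fractional step itself is handled by Farkas' lemma together with a carefully constructed partition of $V(H)$ into three families of $(2k-1)$-subsets, to derive a contradiction from a hypothetical separating functional.

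There are two genuine gaps in your proposal. First, the embedding step does not go through with the tools you invoke. The \emph{weak} (vertex/Chung--Graham) regularity lemma for $k$-graphs controls densities of $k$-partite sub-$k$-graphs induced by products of large subsets, but it does \emph{not} come with a counting or embedding lemma for non-linear $k$-graphs, i.e.\ those in which two edges share more than one vertex. Since $T_k$ has two edges sharing $k-1 \ge 2$ vertices, weak regularity of the three $k$-tuples corresponding to the edges of $T_k$ does not guarantee that you can build even a single copy, let alone $(1-o(1))m$ disjoint ones: it gives you no control over codegrees $|N(v_1\cdots v_{k-1})\cap V_{i_k}|$ inside the cluster, and the classical tournament construction for $3$-graphs shows that weakly regular hypergraphs can have vanishingly many overlapping substructures despite having the ``right'' density. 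To make the greedy embedding correct you would need strong hypergraph regularity with its accompanying counting lemma, which is a substantially heavier machine and would change the character of the argument. Second, the step you describe as ``the heart of the proof'' --- augmenting a maximum $T_k$-tiling in $R$ under a slightly sub-critical codegree, or else extracting a large sparse set --- is exactly the difficult content of the lemma, and you leave it as a sketch (you explicitly flag this). The paper's way of closing this is Lemma~\ref{frac_tiling}: it works directly with the LP dual (Farkas' lemma), orders the vertex set by the dual weights, builds three explicit copies $T^1,T^2,T^3$ of $T_k$ dominated by a structured decomposition of $V(H)$ into $(2k-1)$-tuples, and reads off the contradiction; the extremal alternative comes out precisely as a sparse initial segment of the ordering. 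That mechanism is what produces the clean dichotomy between ``perfect fractional tiling'' and ``$\gamma$-extremal'', and it cannot simply be imported as a black box into the cluster graph without re-deriving it there. In short, your plan correctly identifies the right high-level shape of the dichotomy, but the specific tools you propose (weak hypergraph regularity plus an unspecified rotation argument) do not suffice, whereas the paper's fractional/Pippenger--Spencer route bypasses both obstacles.
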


Combining Lemmas~\ref{lem:extremal},~\ref{lem:absorbing} and~\ref{lem:tiling}, we are now ready to prove Theorem~\ref{thm:main}.

\begin{proof}[\bf Proof of Theorem~\ref{thm:main}]
Choose constants $1/n\ll\gamma',\xi\ll \alpha\ll \gamma\ll 1/k$ such that $n$ is divisible by $2k-1$, and $H$ is a $k$-graph on $n$ vertices with $\delta(H) \geq 2n/(2k-1)$. By Lemma~\ref{lem:absorbing}, there exists a $(T_k,\xi)$-absorbing set $A \subseteq V(H)$. 
Let $V' := V(H) \setminus A$, $H' = H[V']$ and $n' = |V'|$, so $n-\gamma' n \leq n' \leq n$, and also $\delta(H') \geq 2n/(2k-1) - \gamma' n \geq 2n'/(2k-1) - \alpha n'$. By Lemma~\ref{lem:tiling} (applied with $\xi$ and $\gamma/2$ in place of $\eta$ and $\gamma$ respectively) it follows that either $H'$ admits a $T_k$-tiling covering at least $(1-\xi)n'$ vertices of $H'$, or $H'$ is $\gamma/2$-extremal.

Suppose first that $H'$ admits a $T_k$-tiling $\mathcal{T}_k$ which covers at least $(1-\xi)n'$ vertices of $H'$. Then $S := V' \setminus V(\mathcal{T}_k)$ has $|S| \leq \xi n' \leq \xi n$, and moreover $(2k-1) \mid |A\cup S|$ since both $n$ and $|V(\mathcal{T}_k)|$ are divisible by $2k-1$. By our choice of $A$ it follows that $H[A \cup S]$ admits a perfect $T_k$-tiling $\mathcal{T}'_k$, whereupon $\mathcal{T}_k \cup \mathcal{T}'_k$ is a perfect $T_k$-tiling in $H$, as required.

Now suppose instead that $H'$ is $\gamma/2$-extremal, meaning that there exists a set $S' \subseteq V(H')$ with $|S'| = (2k-3)n'/(2k-1)$ such that $d(H'[S']) \leq \gamma/2$.
By adding arbitrary $(2k-3)(n-n')/(2k-1) \leq \gamma' n$ vertices of $V(H) \setminus S'$ to $S'$, we obtain a set $S \subseteq V(H)$ with $|S| = (2k-3)n/(2k-1)$ such that $d(H[S]) \leq ((\gamma/2) \binom{n'}{k} + \gamma' n \binom{n}{k-1})/\binom{n}{k} \leq \gamma$.
This set $S$ witnesses that $H$ is $\gamma$-extremal, and so $H$ admits a perfect $T_k$-tiling by Lemma~\ref{lem:extremal}, completing the proof.
\end{proof}

\noindent \textbf{Notation}. 
For $n \in \mathbb{N}$, define $[n] := \{1, 2, \dots, n\}$. For simplicity, we often abbreviate the set $\{u_1,\ldots, u_l\}$ as $u_1\ldots u_l$. For a set $X$, write $\binom{X}{m}$ for all $m$-subsets of $X$. The expression $x \ll y$ means that for any $y > 0$, there exists $x_0 = x_0(y) > 0$ such that all relevant statements hold for $0 < x < x_0$. This notation extends naturally to arbitrary lengths (eg. $\alpha \ll \beta \ll \gamma$).

Let $H$ be a $k$-graph. Denote by $\nu(H)$ and $e(H)$ the number of vertices and edges in $H$, respectively. We call $H$ a \textit{$k$-partite} $k$-graph if its vertex set $V(H)$ admits a partition into $k$ parts such that every edge contains exactly one vertex from each part.
A vertex subset $S\subseteq V(H)$ of size $2k-1$ \textit{supports $T_k$} if $H[S]$ contains a copy of $T_k$.
A \textit{tight $2$-path} in $H$ consists of two edges sharing $k-1$ vertices. 

\section{Absorbing} \label{sec:absorbing}
In this section, we prove the absorbing lemma (Lemma~\ref{lem:absorbing}). For this, we follow the approach of Han, Morris, Wang and Yang~\cite{HMWY} and Nenadov and Pehova~\cite{NP} (the only difference is we consider hypergraphs here). 
At first, we define the following key notion of \emph{absorber} (following the notation in~\cite{HMWY,NP} in the graph setting) which we will use as ``building blocks'' for absorbing structures.

\begin{definition}\label{def:absorber}
\rm Let $H$ be a $k$-graph on $n$ vertices and $F$ be a $k$-graph on $s$ vertices.
  For any $S\in \binom{V(H)}{s}$ and an integer $t$, we say a subset $A_S\subset V(H)\setminus S$ is an $(F,t)$-\emph{absorber} for $S$ if $|A_S|\le s^2t$ and both $H[A_S]$ and $H[A_S\cup S]$ contain a perfect $F$-tiling.  
\end{definition} 

\subsection{Main tools}
We make use of a construction which guarantees a $(F,\xi)$-absorbing set provided that \emph{every $s$-set of vertices has linearly many vertex-disjoint $(F,t)$-absorbers}. The key idea that makes this possible stems back to Montgomery~\cite{Montgo} and has since found many applications in absorption arguments. Here, we shall follow the approach (and notation) of Nenadov and Pehova \cite{NP}. The following lemma is a hypergraph version of Lemma 2.2 in \cite{NP}. We prove it in Appendix \ref{lem3.2} for completeness.\footnote{To be  exact, the lemma of Nenadov and Pehova \cite[Lemma 2.2]{NP} differs slightly from ours here as they define absorbers to have at most $kt$ vertices whereas we define the upper bound to be $k^2t$. This minor adjustment has no bearing on the statement of this lemma or its proof.}

\begin{lemma}\label{bip-temp}
Let $s,t\in \mathbb{N}$, $F$ be a $k$-graph on $s$ vertices and  $\gamma >0$. Then there exists $\xi = \xi(s,t,\gamma)>0$ such that  the following holds. If $H$ is an $n$-vertex $k$-graph such that for
every $S\in \binom{V(H)}{s}$ there is a family of at least
$\gamma n$ vertex-disjoint $(F,t)$-absorbers, 
then $H$ contains a $(F,\xi)$-absorbing set of size at most $\gamma n$.
\end{lemma}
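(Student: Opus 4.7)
Proof Plan:

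My plan is to adapt the proof of~\cite[Lemma 2.2]{NP} from the graph, single-vertex-absorber setting to our $k$-graph, $s$-set-absorber setting, where $s:=|V(F)|$. The modifications are essentially cosmetic bookkeeping; each ``absorber for a vertex'' in~\cite{NP} is replaced by an absorber for a suitable $s$-set.

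The main external ingredient is Montgomery's robust bipartite template (see~\cite[Lemma 2.3]{NP}): for every $\mu>0$, there exist $\Delta=\Delta(\mu)\in\mathbb{N}$ and $m_0\in\mathbb{N}$ such that for every $m\ge m_0$, there is a bipartite graph $B$ on parts $X\cup(Y\cup Z)$ with $|X|=|Y|=m$, $|Z|=\mu m$, and $\Delta(B)\le\Delta$, with the property that $B[X,Y\cup Z']$ contains a perfect matching covering $X$ for every $Z'\subseteq Z$ with $|Z'|=\mu m/2$. Using this, I would fix $\mu=\mu(\gamma,s)>0$ sufficiently small, set $m:=\lfloor\gamma n/C\rfloor$ for a large constant $C=C(s,t,\Delta,\mu)$, and set $\xi:=\mu m/(3n)$.

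The construction is then as follows. Embed $B$ into $H$ by choosing pairwise disjoint $(s-1)$-subsets $\{T_x:x\in X\}$ of $V(H)$ and distinct vertices $\{v_w:w\in Y\cup Z\}$ of $V(H)$, disjoint from all $T_x$. For each edge $e=xw\in E(B)$, define the associated $s$-set $S_e:=T_x\cup\{v_w\}$, and greedily select (iterating over $E(B)$ in any order) an $(F,t)$-absorber $A_e$ for $S_e$ disjoint from all previously committed vertices. The greedy step succeeds because every $s$-set admits at least $\gamma n$ vertex-disjoint absorbers by hypothesis, while only $O(m\Delta s^2t)\ll\gamma n$ vertices have been used at any earlier stage. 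Define the absorbing set
\[
A:=\bigcup_{x\in X}T_x\;\cup\;\{v_w:w\in Y\cup Z\}\;\cup\;\bigcup_{e\in E(B)}A_e,
\]
whose size is at most $\gamma n$ by the choice of $m$ and $C$.

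To verify the absorbing property, let $U\subseteq V(H)\setminus A$ with $|U|\le\xi n$ and $s\mid|A\cup U|$. Fix an injection $\phi:U\hookrightarrow\{v_z:z\in Z\}$ with image $\{v_z:z\in Z^*\}$ for a set $Z^*\subseteq Z$ of size $|U|\le\mu m/2$, and apply the template to obtain a perfect matching $M$ of $X$ in $B[X,Y\cup(Z\setminus Z^*)]$. For each matched edge $e=xw\in M$, use the perfect $F$-tiling of $H[A_e\cup S_e]$ given by the absorber property; for each unmatched edge $e\in E(B)\setminus M$, use the perfect $F$-tiling of $H[A_e]$. The \emph{main obstacle} is tiling the residual $\{v_w:w\in Z^*\}\cup U$: following~\cite{NP}, this is accomplished by a ``swap'' step in which $u=\phi^{-1}(v_w)$ is exchanged with $v_w$ in the tiling of a suitable absorber, leveraging symmetry built into the choice of $A_e$ (the absorber is selected so that $H[A_e\cup T_x\cup\{u\}]$ also admits a perfect $F$-tiling). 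The careful bookkeeping to justify this swap, together with divisibility and size control, mirrors~\cite{NP} exactly and is deferred to the appendix.
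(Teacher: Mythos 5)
Your proposal correctly identifies the high-level framework (Montgomery's robust bipartite template with $(F,t)$-absorbers placed greedily on its edges), and the greedy selection and size bookkeeping are sound. However, there is a genuine gap in the verification of the absorbing property, namely the ``swap'' step you defer to the appendix. An $(F,t)$-absorber $A_e$ for $S_e=T_x\cup\{v_w\}$ guarantees perfect $F$-tilings of $H[A_e]$ and of $H[A_e\cup S_e]$, and nothing else; in particular it says nothing about $H[A_e\cup T_x\cup\{u\}]$ for an arbitrary swapped-in $u\in U$. Since the $A_e$ are fixed before $U$ is revealed, there is no way to ``select the absorber so that $H[A_e\cup T_x\cup\{u\}]$ also admits a perfect $F$-tiling'' for every conceivable $u$. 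This is not bookkeeping to defer; it is the crux of absorption, and~\cite{NP} does not use a swap of this kind. (Separately, the template variant you quote — match $X$ into $Y\cup Z'$ with $|Z'|=\mu m/2$ — saturates $X$ but leaves $|Y\cup Z'|-m=\mu m/2$ of the template vertices unmatched, so the uncovered part of $A$ has size $\mu m$, not $|U|$; even the residual count is off.)

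The mechanism that actually works, and which the paper follows, is to absorb $U$ directly into the \emph{flexible} part $X$ of the template rather than into the absorbers. First the absorber hypothesis is upgraded: every vertex $v$ lies in at least $\gamma n/s$ copies of $F$ that are pairwise disjoint outside $v$; call the resulting family of $(s-1)$-sets $\mathcal{F}_v$. Then $X$ is chosen \emph{randomly} (each vertex kept with probability $q$), so that with positive probability every $v$ still has $\Omega(n)$ members of $\mathcal{F}_v$ entirely inside $X$, denoted $\mathcal{F}'_v$. The $(s-1)$-sets of the construction are attached to the $Z_m$ side of the template (partitioned as $\{Z_i\}_{i\in[3m]}$), while $X_m,Y_m$ map to single vertices. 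To absorb a leftover set $R$, one first covers each $v\in R$ by a copy of $F$ from $\mathcal{F}'_v$ (consuming $|R|(s-1)$ vertices of $X$), then fills more of $X$ with disjoint copies of $F$ until exactly $m$ vertices remain, and \emph{only then} invokes the template to find a perfect matching of this $m$-subset $X'_m$ together with $Y_m$ against $Z_m$, using the absorbers $A_e$ on the matched edges and the trivial tiling $H[A_e]$ on the rest. Your proposal reverses the roles of $X$ and $Z$, omits the derivation of the per-vertex families $\mathcal{F}_v$ from the absorber hypothesis, omits the random choice of $X$, and in their place relies on a swap that the absorber definition does not support; without these ingredients the residual cannot be tiled.
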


In order to apply Lemma \ref{bip-temp}, what remains is to prove the existence of linearly many vertex-disjoint absorbers. To achieve this, we adopt the lattice-based absorbing method developed in \cite{han16,han17,keevash15}, which we now discuss. 

We will use the following notation introduced by Keevash and Mycroft
\cite{keevash15}.
Let $H$ be an $n$-vertex $k$-graph and $\mathcal{P} = \{V_1,\ldots,V_r\}$ be a partition of $V(H)$. For any subset $S\subseteq V(H)$, the \emph{index vector} of $S$ with respect to $\mathcal{P}$, denoted by $\mathbf{i}_{\mathcal{P}}(S)$, is the vector in $\mathbb{Z}^r$ whose $i$th coordinate is the size of the intersections of $S$ with $V_i$ for each $i\in[r]$. For $j \in [r]$, let $\mathbf{u}_j\in \mathbb{Z}^r$ be the $j$th unit vector, i.e., $\mathbf{u}_j$ has $j$th coordinate  $1$ and
 all other coordinates $0$. 
 A \emph{transferral} is a vector of the form $\mathbf{u}_i-\mathbf{u}_j$ for some
distinct $i\neq j\in[r]$. A vector $\mathbf{i}\in \mathbb{Z}^r$ is an $s$-vector if all its coordinates are non-negative and their sum equals
$s$. Given $\beta > 0$ and an $s$-vertex graph $F$, an $s$-vector $\mathbf{v}$ is called $(F,\beta)$-\emph{robust with respect to} $\mathcal{P}$ if for any set $W$ of at most $\beta n$ vertices, there is a copy of $F$ in $V(H)\setminus W$ whose vertex set has index vector $\mathbf{v}$. Let $I^{\beta}(\mathcal{P})$ be the set of all $(F,\beta)$-robust $s$-vectors and $L^{\beta}
(\mathcal{P})$ be the lattice (i.e., the additive subgroup) generated by $I^{\beta}(\mathcal{P})$.\medskip

We also need the notion of $F$-reachability introduced by Han, Morris, Wang and Yang~\cite{HMWY}. Let $m,t$ be positive integers. Then we say that two vertices $u, v\in V(H)$ are $(F, m, t)$-\emph{reachable} (in $H$) if for any set $W\subseteq V(H)$ of at most $m$ vertices, there is a set $S\subseteq V(H)\setminus W$ of size at most $st-1$ such that both $H[S\cup \{u\}]$ and $H[S\cup \{v\}]$ have perfect $F$-tilings, where we call such $S$ an $F$-\emph{connector} for $u$ and $v$. 
Moreover, a set $U\subseteq V(H)$ is $(F,m,t)$-\emph{closed} if every two vertices in $U$ are $(F,m,t)$-reachable.   \medskip

The following result provides a sufficient condition on a given partition $\{V_1, V_2,\ldots,V_r\}$ to ensure that every $S\in \binom{V(G)}{s}$ of a certain type has linearly many vertex-disjoint absorbers. The following lemma is a hypergraph version of Lemma 3.10 in \cite{HMWY}. We prove it in Appendix \ref{lem3.3} for completeness.

\begin{lemma}\label{close}
Given $k,s, t\in \mathbb{N}$ with $k,s\ge 3, t\ge 1$ and  $\beta >0$, the following holds for any $s$-vertex $k$-graph $F$ and sufficiently large $n$. Let $H$ be an $n$-vertex $k$-graph with a partition $\mathcal{P}= \{V_1, V_2,\ldots,V_r\}$ for some integer $r\ge 1$ such that each $V_i$ is $(F,\beta n, t)$-closed with $|V_i|\ge \beta n$ for each $i\in [r]$. If $S\in \binom{V(H)}{s}$ such that $\emph{\bf{i}}_{\mathcal{P}}(S)$ is $(F,\beta)$-robust, then $S$ has at least $\frac{\beta}{s^3t}n$
vertex-disjoint $(F,t)$-absorbers.
\end{lemma}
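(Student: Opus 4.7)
The plan is to construct an $(F,t)$-absorber for $S$ explicitly by combining the robustness of $\mathbf{i}_\mathcal{P}(S)$ with the closedness of each part, and then to iterate greedily to produce linearly many pairwise vertex-disjoint copies. The robustness hypothesis is what lets us ``swap in'' a copy of $F$ with the same part-distribution as $S$, and the closedness of each part is what lets us glue the swap together with $F$-tilings.

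For one absorber, write $S=\{s_1,\dots,s_s\}$ with $s_i\in V_{j_i}$, and suppose we are barred from using some set $W\subseteq V(H)$ with $|W|\le\beta n/2$. Since $\mathbf{i}_\mathcal{P}(S)$ is $(F,\beta)$-robust, there is a copy $F'$ of $F$ in $V(H)\setminus(W\cup S)$ with $\mathbf{i}_\mathcal{P}(V(F'))=\mathbf{i}_\mathcal{P}(S)$, so we may label $V(F')=\{v_1,\dots,v_s\}$ with $v_i\in V_{j_i}$ for each $i$. Because each $V_{j_i}$ is $(F,\beta n,t)$-closed and $s_i,v_i\in V_{j_i}$, we may choose successively for $i=1,\dots,s$ an $F$-connector $C_i$ for the pair $(s_i,v_i)$ with $|C_i|\le st-1$, avoiding $W\cup S\cup V(F')\cup C_1\cup\dots\cup C_{i-1}$; the cumulative forbidden set has size at most $\beta n/2+2s+s(st-1)\le\beta n$ for $n$ large, so closedness applies at every step. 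Set $A_S:=V(F')\cup C_1\cup\dots\cup C_s$, so $|A_S|\le s+s(st-1)=s^2t$.

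The set $A_S$ is an $(F,t)$-absorber for $S$: a perfect $F$-tiling of $H[A_S\cup S]$ is given by $F'$ together with the perfect $F$-tiling of each $H[C_i\cup\{s_i\}]$ guaranteed by the definition of $F$-connector, and these are pairwise vertex-disjoint and cover exactly $V(F')\cup\bigcup_i(C_i\cup\{s_i\})=A_S\cup S$; conversely, a perfect $F$-tiling of $H[A_S]$ is given by the perfect $F$-tiling of each $H[C_i\cup\{v_i\}]$, whose vertex sets partition $(\bigcup_i C_i)\cup V(F')=A_S$.

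To obtain the required number of vertex-disjoint absorbers I would iterate greedily. Assume we have already produced $\ell<\beta n/(s^3t)$ pairwise vertex-disjoint $(F,t)$-absorbers $A^{(1)},\dots,A^{(\ell)}$ for $S$, using in total at most $\ell s^2t<\beta n/s\le \beta n/2$ vertices. Applying the single-absorber construction with $W:=A^{(1)}\cup\dots\cup A^{(\ell)}$ produces a new absorber disjoint from all previous ones, and we continue until $\ell\ge\beta n/(s^3t)$. The only real obstacle is bookkeeping: each step invokes the robustness slack once and the closedness slack $s$ times, so one must verify that the forbidden set never exceeds $\beta n$ throughout the entire greedy process. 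The constants $s^2t$ and $s^3t$ in the statement are chosen precisely to make this accounting close.
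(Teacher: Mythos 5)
Your proposal is correct and follows essentially the same approach as the paper: use robustness of $\mathbf{i}_\mathcal{P}(S)$ to find a copy $F'$ with matching index vector, link the vertex pairs $(s_i,v_i)$ by $F$-connectors using the closedness of each part, and iterate greedily to reach $\beta n/(s^3t)$ disjoint absorbers. The only cosmetic difference is that you phrase the iteration as a direct greedy construction with an explicit forbidden set $W$, whereas the paper takes a maximal family and derives a contradiction; the bookkeeping and the absorber verification are the same.
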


To apply Lemma~\ref{close}, our main task is to show that
$V(H)$ is $(T_k,\beta n, t)$-closed (which is achieved until Lemma~\ref{closed}). In doing this, we first show that 
$V(H)$ admits a partition into constantly many closed parts. This partition serves as a basis for merging parts to obtain our target partition-$\{V(H)\}$. The following lemma is a hypergraph version of Lemma 4.1 in \cite{HMWY}. We prove it in Appendix \ref{lem3.4} for completeness.

\begin{lemma}[Partition lemma]\label{partition}
For any constant $\delta > 0$ and integers $s,k \ge 3$, there exist $\beta>0$ 
and an integer $t>0$
such that the following holds for sufficiently large $n$. Let $H$ be an $n$-vertex $k$-graph and $F$ be an $s$-vertex $k$-graph. If every vertex in $V(H)$ is $(F,\delta n, 1)$-reachable to at least $\delta n$ other vertices, then there is a partition $\mathcal{P}= \{V_1, V_2,\ldots,V_r\}$ of $V(H)$ for some integer $r\le \lceil{\tfrac{1}{\delta}\rceil}$ such that for each $i\in[r]$, $V_i$ is $(F,\beta n, t)$-closed and $|V_i|\ge  \tfrac{\delta }{2}n$.
\end{lemma}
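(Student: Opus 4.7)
The plan is to construct the partition by iteratively merging vertices via their reachability relation. The central technical tool is a \emph{merging step}: if vertices $u$ and $w$ are both $(F, m + 2st, t)$-reachable to a common vertex $v$, then $u$ and $w$ are themselves $(F, m, 2t)$-reachable. Indeed, given any forbidden set $W$ with $|W| \leq m$, choose an $F$-connector $S_1$ for $(u, v)$ avoiding $W \cup \{w\}$, and then an $F$-connector $S_2$ for $(v, w)$ avoiding $W \cup S_1 \cup \{u\}$; both enlarged forbidden sets have size at most $m + 2st$, so such connectors exist. The set $S := S_1 \cup \{v\} \cup S_2$ has size at most $2st - 1$, and both $H[S \cup \{u\}]$ and $H[S \cup \{w\}]$ decompose into disjoint subgraphs each admitting a perfect $F$-tiling (splitting at $v$), so $S$ is a joint $F$-connector.

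Set $\widetilde{N}_0(v) := \{u \in V(H) : u \text{ is } (F, \delta n, 1)\text{-reachable to } v\}$, so $|\widetilde{N}_0(v)| \geq \delta n$ by hypothesis. Strengthen reachability iteratively by fixing $t_i := 2^i$ and a geometric sequence $\beta_i := \delta \cdot C^{-i}$ for a suitable constant $C = C(s)$, and letting $\widetilde{N}_i(v)$ denote the $(F, \beta_i n, t_i)$-reachability neighborhood of $v$. As long as some pair $u \notin \widetilde{N}_i(v)$ satisfies $|\widetilde{N}_i(u) \cap \widetilde{N}_i(v)| \geq 2\beta_{i+1} n + 2$, the merging step yields $u \in \widetilde{N}_{i+1}(v)$ via any common vertex selected outside the enlarged forbidden set. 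Tracking the reachability classes (maximal sets of pairwise reachable vertices), the number of such classes is initially at most $\lceil 1/\delta \rceil$ since each contains some $\widetilde{N}_0(v)$ of size at least $\delta n$, and it can only decrease under further merging, so the iteration stabilizes after $I \leq \lceil 1/\delta \rceil$ rounds.

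Take $V_1, \ldots, V_r$ to be the stabilized reachability classes; one further merging round across within-class pairs ensures each class is $(F, \beta n, t)$-closed with $\beta := \beta_I$ and $t := t_I$, both constants depending only on $\delta$ and $s$. The bound $|V_i| \geq \delta n / 2$ follows since each part contains some $\widetilde{N}_0(v)$ (whose size is at least $\delta n$), and $r \leq \lceil 1/\delta \rceil$ follows from pairwise disjointness. The main obstacle is controlling the parameter degradation: each merging round shrinks $\beta$ by a factor of $C$ and doubles $t$, so after $O(1/\delta)$ rounds we obtain $\beta \geq \delta \cdot C^{-O(1/\delta)} > 0$, a positive constant depending only on $\delta$ and $s$ as required, while $t = 2^{O(1/\delta)}$ remains bounded. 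A subsidiary subtlety is that the stabilized shared-neighborhood relation may not be literally transitive without additional work; this is handled by a final round of local merges that enforces pairwise closedness within each putative class, at the cost of another constant-factor hit on $\beta$.
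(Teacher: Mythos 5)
Your merging step (a one-common-vertex version of the paper's Fact~\ref{fact:concatenate}) is fine once you select the intermediate vertex outside the forbidden set, and the general flavor — iteratively trading $\beta$ for $t$ to strengthen reachability — matches the paper. The genuine gap is in how you extract the partition. You propose to take ``reachability classes (maximal sets of pairwise reachable vertices)'' and argue there are at most $\lceil 1/\delta\rceil$ of them because ``each contains some $\widetilde{N}_0(v)$ of size at least $\delta n$.'' This does not hold: $\widetilde{N}_0(v)$ is the set of vertices $(F,\delta n,1)$-reachable \emph{to} $v$, but those vertices need not be pairwise reachable to one another, so $\widetilde{N}_0(v)$ is not a clique in the reachability graph and a maximal clique containing $v$ need not contain it. Without this inclusion, maximal cliques need not be large, need not be few, and moreover they overlap, so they do not give a partition at all. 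Your termination argument (``stabilizes after $I\le \lceil 1/\delta\rceil$ rounds'') inherits this problem: the number of maximal cliques is not controlled by $\lceil 1/\delta\rceil$, and even if it were, merges can occur within a class without decreasing the clique count, so the round count is not bounded by the class count. Dismissing non-transitivity as ``a subsidiary subtlety... handled by a final round of local merges'' is precisely where the missing idea lives.

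The paper avoids this by a dual, extremal argument: take $d$ to be the \emph{largest} number of pairwise \emph{non}-reachable vertices $v_1,\dots,v_d$ (at a suitable parameter level). The Inclusion–Exclusion observation — any $r_0 = \lceil 1/\delta\rceil+1$ vertices contain a reachable pair because the sets $\widetilde{N}_0(v_i)$, each of size $\ge\delta n$, cannot be pairwise disjoint — bounds $d \le r_0-1$, which is what controls the parameter degradation to a fixed constant number of levels. Maximality of $d$ then immediately forces two clean structural facts: (i) every other vertex is reachable to some $v_i$ (otherwise extend the anti-chain), and (ii) the neighborhoods $\widetilde{N}_{d+1}(v_i)$ pairwise intersect in at most $\lambda_d n+1$ vertices (otherwise Fact~\ref{fact:concatenate} would make $v_i,v_j$ reachable). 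Subtracting the pairwise overlaps yields sets $U_i$ that are automatically closed and nearly partition $V(H)$, and the leftover $U_0$ (of size $O(\lambda_d n)$) is dropped back in by one more application of the concatenation fact. This extremal set-up is what makes the partition and the parameter bookkeeping rigorous; the iterative-merge framing you chose does not supply an equivalent handle on either the number of parts or the number of rounds.
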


We then proceed with a sequence of merging processes, which follows the strategy of transferrals in \cite{han17}. The following lemma builds a sufficient condition that allows us to merge two distinct parts into a closed one.
As the proof of Lemma \ref{transferral} is very similar to that of [\cite{HMWY}, Lemma 4.4], we defer it to Appendix \ref{fulu:transferral}.

\begin{lemma}[Transferral]\label{transferral}
Given any positive integers $s, t,k$  with $s,k\ge 3$ and  constant $\beta>0$, the following holds for sufficiently large $n$. Let $F$ be an $s$-vertex $k$-graph and $H$ be an $n$-vertex $k$-graph with a partition $\mathcal{P}=\{V_1, V_2,\ldots,V_r\}$ of $V(H)$ such that each $V_i$ is $(F,\beta n, t)$-closed. For distinct $i, j \in[r]$, if $\mathbf{u}_i-\mathbf{u}_j\in L^{\beta}(\mathcal{P})$, then there exists a constant $C:=C(F,r)$ such that $V_i\cup V_j$ is $\left(F,\tfrac{\beta}{2}n, Cst\right)$-closed.
\end{lemma}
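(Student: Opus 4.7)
If $u,v$ already lie in a common class $V_l$ then the conclusion follows immediately from the $(F,\beta n,t)$-closedness of $V_l$ (since $\beta n/2 \le \beta n$ and $st-1 \le s(Cst)-1$ for $C \ge 1$), so I would focus on the nontrivial case $u \in V_i$, $v \in V_j$. The plan is to use robust copies of $F$ to transport indices from $V_i$ to $V_j$ and then glue everything into a single $F$-connector using in-class closedness.

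The first step is to express the transferral as a short integer combination of robust vectors. Since $I^\beta(\mathcal{P}) \subseteq \{0,1,\ldots,s\}^r$ has cardinality bounded in terms of $s$ and $r$, and $\mathbf{u}_i - \mathbf{u}_j$ has coordinates bounded by $1$, reducing any presentation modulo the kernel of the surjection $\mathbb{Z}^{I^\beta(\mathcal{P})} \twoheadrightarrow L^\beta(\mathcal{P})$ yields
\[
\mathbf{u}_i - \mathbf{u}_j \;=\; \sum_{l=1}^{p}\mathbf{a}_l \;-\; \sum_{l=1}^{p}\mathbf{b}_l
\]
with $\mathbf{a}_l,\mathbf{b}_l \in I^\beta(\mathcal{P})$ and $p \le p_0 = p_0(F,r)$; equal lengths of the two sums follow from comparing coordinate sums (each $\mathbf{a}_l,\mathbf{b}_l$ contributes $s$ while $\mathbf{u}_i - \mathbf{u}_j$ contributes $0$). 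Next, I would greedily pick vertex-disjoint copies $F_1^+,\ldots,F_p^+,F_1^-,\ldots,F_p^-$ with these index vectors, all disjoint from $W \cup \{u,v\}$: when selecting each copy the set to avoid has size $\tfrac{\beta}{2}n + O_{F,r}(1) < \beta n$ for $n$ large, so the robustness of the relevant index vector produces the required copy.

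Set $A := \bigcup_l V(F_l^+)$ and $B := \bigcup_l V(F_l^-)$. The identity $\mathbf{u}_i + \sum_l \mathbf{b}_l = \mathbf{u}_j + \sum_l \mathbf{a}_l$ gives $\mathbf{i}_\mathcal{P}(\{u\}\cup B) = \mathbf{i}_\mathcal{P}(\{v\}\cup A)$, so a class-preserving bijection $\phi\colon \{u\}\cup B \to \{v\}\cup A$ exists, and every pair $(x,\phi(x))$ consists of two distinct vertices in a common class $V_l$. Processing the pairs one at a time, I would invoke the $(F,\beta n,t)$-closedness of their common class to pick a connector $T_x$ of size at most $st-1$ for each pair, keeping all $T_x$'s pairwise disjoint and disjoint from $A \cup B \cup W \cup \{u,v\}$; the running avoidance set stays below $\beta n$ since it grows by only an $O_{F,r}(1)$ amount. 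Put $S := A \cup B \cup \bigcup_x T_x$.

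It remains to verify that both $S \cup \{u\}$ and $S \cup \{v\}$ admit a perfect $F$-tiling. For $S \cup \{u\}$, decompose as $A \,\sqcup\, \bigsqcup_{x \in B \cup \{u\}}(T_x \cup \{x\})$: the block $A$ is tiled by the $F_l^+$'s and each $T_x \cup \{x\}$ is $F$-tileable by the connector property. For $S \cup \{v\}$, decompose instead as $B \,\sqcup\, \bigsqcup_{x \in B \cup \{u\}}(T_x \cup \{\phi(x)\})$: the block $B$ is tiled by the $F_l^-$'s, each $T_x \cup \{\phi(x)\}$ is $F$-tileable, and the bijectivity of $\phi$ onto $\{v\} \cup A$ ensures every element of $A \cup \{v\}$ is covered exactly once. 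A routine size estimate gives $|S| \le s(Cst)-1$ with $C := 2p_0 + 1 = C(F,r)$, so $S$ certifies that $u$ and $v$ are $(F,\beta n/2,Cst)$-reachable. The main obstacle is securing the bound $p \le p_0(F,r)$: although $L^\beta(\mathcal{P})$ is visibly generated by the finite set $I^\beta(\mathcal{P})$, a careless presentation of $\mathbf{u}_i - \mathbf{u}_j$ could use unbounded coefficients, and the whole construction collapses unless $p$ is uniformly bounded in terms of $F$ and $r$.
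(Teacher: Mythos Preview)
Your proposal is correct and follows essentially the same strategy as the paper: realise the transferral as a bounded-length difference of robust $s$-vectors, plant disjoint copies of $F$ with those index vectors, and then use the in-class closedness of the $V_l$'s to attach connectors. Your class-preserving bijection $\phi\colon\{u\}\cup B\to\{v\}\cup A$ is in fact a cleaner packaging of what the paper does via its partition of $U_{\mathbf v}^i\setminus\{x',y'\}$ into two equal halves; the two tilings you exhibit for $S\cup\{u\}$ and $S\cup\{v\}$ are exactly the paper's tilings rewritten through $\phi$. You are also right to flag the bound $p\le p_0(F,r)$ as the one nontrivial point: the paper leaves this implicit (it simply asserts a constant $C=C(F,r)$), while your finiteness argument---only finitely many possible $I^\beta(\mathcal P)\subseteq\{0,\ldots,s\}^r$, hence a uniform bound on the shortest representation of any $\mathbf u_i-\mathbf u_j$---is the standard way to justify it.
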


To satisfy the required condition in Lemma~\ref{partition} when $F=T_k$, we prove the following result.
\begin{lemma}\label{patition1}
Suppose that $1/n\ll \delta \ll \alpha$ and $k\ge 3$ is an integer. If $H$ is a $k$-graph on $n$ vertices with $\delta(H)\ge \alpha n$, then every vertex in $V(H)$ is $(T_k,\delta n,1)$-reachable to at least $\delta n$ vertices of $V(H)$. 
\end{lemma}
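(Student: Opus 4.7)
The plan is to fix an arbitrary $u\in V(H)$ and to exhibit at least $\alpha n/2 \ge \delta n$ vertices $v\in V(H)\setminus\{u\}$ that are $(T_k,\delta n,1)$-reachable from $u$. For each such pair I aim to construct $\Omega(n^{2k-2})$ candidate connector sets $S$ of size $2k-2$ so that both $H[S\cup\{u\}]$ and $H[S\cup\{v\}]$ contain $T_k$, which comfortably beats the $O(\delta n^{2k-2})$ candidates that any forbidden set $W$ with $|W|\le\delta n$ can destroy. To fix the common skeleton, I let both $u$ and $v$ play the role of vertex $1$ in the labelling $\{1,\dots,2k-1\}$ of $T_k$, and decompose $S=A\cup\{b,c\}\cup B$ into a $(k-2)$-set $A$ (roles $2,\dots,k-1$), two vertices $b,c$ (roles $k,k+1$), and a $(k-2)$-set $B$ (roles $k+2,\dots,2k-1$). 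Then both $H[S\cup\{u\}]$ and $H[S\cup\{v\}]$ contain $T_k$ precisely when
\[
A\cup\{u,b\},\ A\cup\{u,c\},\ A\cup\{v,b\},\ A\cup\{v,c\},\ B\cup\{b,c\}\in E(H).
\]

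First I would identify the good partners $v$. A double count using $\delta(H)\ge\alpha n$ gives that the number of triples $(A,b,v)$ with $A\cup\{u,b\},A\cup\{v,b\}\in E(H)$ is at least $\alpha n\cdot(k-1)d(u)=\Omega(n^k)$, while the same count with any fixed $v$ is at most $(k-1)d(u)=O(n^{k-1})$. A standard averaging argument then yields a set $R(u)\subseteq V(H)\setminus\{u\}$ with $|R(u)|\ge\alpha n/2$ such that, for every $v\in R(u)$,
\[
n_1(u,v):=\sum_{A\in\binom{V(H)\setminus\{u,v\}}{k-2}}\bigl|N(A\cup\{u\})\cap N(A\cup\{v\})\bigr|\ge c\,n^{k-1}
\]
for some $c=c(\alpha,k)>0$, where $N(\cdot)$ denotes the codegree neighbourhood. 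Since $\alpha n/2\ge\delta n$, it suffices to verify $(T_k,\delta n,1)$-reachability for every $v\in R(u)$.

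Next, fix $v\in R(u)$ and set $M_A=N(A\cup\{u\})\cap N(A\cup\{v\})$. Cauchy--Schwarz gives $\sum_A|M_A|^2\ge n_1(u,v)^2/\binom{n}{k-2}=\Omega(n^k)$, so there are $\Omega(n^k)$ pairs $(A,\{b,c\})$ with distinct $b,c\in M_A$, realising the four codegree conditions involving $u$ and $v$. For each such pair, the codegree bound applied to $(k-1)$-subsets containing $\{b,c\}$ produces $\Omega(n^{k-2})$ edges of $H$ through $\{b,c\}$, and at most $O(n^{k-3})$ of these meet the $O(k)$-sized set $A\cup\{u,v\}$; hence there remain $\Omega(n^{k-2})$ valid $(k-2)$-sets $B$ with $B\cup\{b,c\}\in E(H)$ and $B$ disjoint from $A\cup\{u,v\}$. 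Multiplying gives $\Omega(n^{2k-2})$ complete configurations $(A,\{b,c\},B)$. Discarding those whose $S$ intersects $W$ costs only $|W|\cdot O(n^{2k-3})=O(\delta n^{2k-2})$, and since $\delta\ll\alpha$ a configuration survives, delivering the required $S$.

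The hard part will be Step~2, producing $R(u)$: a priori the $\Omega(n^k)$ triple count could be concentrated on a sublinear set of partners $v$, and one must exploit the per-vertex upper bound $(k-1)d(u)=O(n^{k-1})$ to force linearly many good $v$'s. The subsequent Cauchy--Schwarz bootstrap and the routine codegree estimate on $2$-sets then close the argument. A small sanity check is worth making for $k=3$, where $A$ and $B$ degenerate to singletons; the first $T_3$ then reads $\{a,u,b\},\{a,u,c\},\{b,c,d\}$ and the counting logic above applies unchanged.
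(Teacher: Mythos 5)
Your proof is correct, and it uses the same connector skeleton as the paper: both $u$ and $v$ play the role of vertex $1$ in $T_k$, and $S=A\cup\{b,c\}\cup B$ corresponds exactly to the paper's $U\cup\{w_i,w_j\}\cup\{y_1,\dots,y_{k-3},u'\}$, with the same five edge conditions. What differs is the logical structure of the argument. You first isolate a fixed set $R(u)$ of at least $\alpha n/2$ good partners via a double count of triples $(A,b,v)$ and an averaging argument, and then for each $v\in R(u)$ you show there are $\Omega(n^{2k-2})$ distinct connectors (using Cauchy--Schwarz on $\sum_A|M_A|$ to pass from a first-moment bound of order $n^{k-1}$ to a count of $(A,\{b,c\})$ pairs of order $n^k$, then a routine codegree count for $B$), so that any forbidden $W$ with $|W|\le\delta n$ destroys only $O(\delta n^{2k-2})$ of them. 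The paper instead fixes $W$ at the outset, takes an arbitrary $(k-2)$-set $U$ avoiding $W$, extracts $w_i,w_j\in N(U\cup\{z\})\setminus W$ by pigeonhole so that $|N(U\cup\{w_i\})\cap N(U\cup\{w_j\})|$ is large, and reads off $\ge\delta n$ choices of $v$ from that common neighbourhood minus $W$. This is shorter and more hands-on, but the set of partners $v$ it produces depends on $W$; strictly speaking it proves the statement with the quantifiers over $v$ and $W$ in the opposite order from what the definition of $(F,m,t)$-reachability requires. Your counting-based version makes $R(u)$ manifestly independent of $W$ and so fills that gap, at the cost of the extra averaging and second-moment bookkeeping. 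Both routes ultimately hinge on the same codegree-driven abundance of the five required edges.
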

\begin{proof}
Fix a vertex $z \in V(H)$ and a subset $W \subset V(H)$ with $|W| \leq \delta n$. For any $(k-2)$-set $U \subset V(H) \setminus (W \cup \{z\})$, we have $|N(U \cup \{z\})|\ge \alpha n$ as $\de(H)\ge \alpha n$. Since $\delta \ll \alpha$, we arbitrarily select $\lceil\frac{4}{\alpha}\rceil$ vertices $\{w_1, \dots, w_{\lceil\frac{4}{\alpha}\rceil}\} $ from $ N(U \cup \{z\}) \setminus W$.

By the minimum codegree condition and Inclusion-Exclusion Principle, there exist distinct $i, j \in \{1,2,\ldots,\lceil\frac{4}{\alpha}\rceil\}$ such that 
\[
|N(U \cup \{w_i\}) \cap N(U \cup \{w_j\})| \geq \frac{n}{2\lceil\frac{4}{\alpha}\rceil^2}\ge 2\delta n.
\]
We may then choose any vertex $v \in \left(N(U \cup \{w_i\}) \cap N(U \cup \{w_j\})\right) \setminus W$, noting there are at least $2\delta n - \delta n = \delta n$ choices. 

Next, select $k-3$ vertices $\{y_1, \dots, y_{k-3}\}$ from $V(H) \setminus (U \cup W\cup\{z, w_i, w_j\})$ and choose $u \in N(\{w_i, w_j, y_1,\dots, y_{k-3}\}) \setminus W$. Then both $(2k-1)$-sets 
\[
\{z\} \cup U \cup \{w_i, w_j\} \cup \{y_1, \dots, y_{k-3}\} \cup \{u\}
\]
and 
\[
\{v\} \cup U \cup \{w_i, w_j\} \cup \{y_1, \dots, y_{k-3}\} \cup \{u\}
\]
induce two copies of $T_k$. Hence every vertex $z$ is $(T_k, \delta n, 1)$-reachable to at least $\delta n$ vertices $v$ as above.
\end{proof}

\subsection{Merging}
Given Lemma~\ref{partition}, the application of Lemma~\ref{transferral} 
in the merging process reduces to verifying the existence of a transferral. The key difference from  Bowtell, Kathapurkar, Morrison and Mycroft's work~\cite{BKMM} is that $\delta(H) \geq \alpha n$ enables us to merge constantly many parts into a closed one. 

\begin{lemma}\label{closed} 
Let $k\ge 3$, $r,t$ be positive integers and $1/n\ll\beta\ll\delta,\alpha$. Let $H$ be a $k$-graph on $n$ vertices such that $\delta(H) \ge \alpha  n$ and $\mathcal{P}=\{V_1, V_2,\ldots,V_r\}$ be a partition of $V(H)$, where $V_i$ is $(T_k,\beta n, t)$-closed and $|V_i|\ge\delta n$ for each $i\in[r]$. Then $V(H)$ is~$(T_k, \tfrac{\beta}{2^{r-1}}n, (2Ck-C)^{r-1}t)$-closed, where $C=C(T_k,r)$ is a constant.
\end{lemma}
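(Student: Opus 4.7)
The plan is to induct on $r$. The base case $r=1$ is immediate since $V(H)=V_1$ is already $(T_k, \beta n, t)$-closed. For $r \geq 2$, I will identify a pair $i \neq j$ with $\mathbf{u}_i - \mathbf{u}_j \in L^{\beta}(\mathcal{P})$; Lemma~\ref{transferral} then merges $V_i$ and $V_j$ into a single $(T_k, \beta n/2, (2Ck-C)t)$-closed part of size $\geq \delta n$ (using $Cs = C(2k-1) = 2Ck-C$), leaving a partition with $r-1$ parts of size $\geq \delta n$, each $(T_k, \beta n/2, (2Ck-C)t)$-closed. Applying the inductive hypothesis with $\beta/2$ and $(2Ck-C)t$ in place of $\beta$ and $t$ yields $(T_k, \beta n/2^{r-1}, (2Ck-C)^{r-1}t)$-closedness of $V(H)$.

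The heart of the proof is building this transferral, and the plan is to exploit the tail-swap symmetry of $T_k$: each of the vertices at positions $k+2, \dots, 2k-1$ lies in only the third edge $e_3 = \{x_k, x_{k+1}, x_{k+2}, \dots, x_{2k-1}\}$. Hence for any $T_k$-copy on $(x_1, \dots, x_{2k-1})$, replacing $x_{k+2}$ by any $v \in N(\{x_k, x_{k+1}, x_{k+3}, \dots, x_{2k-1}\})$ (avoiding the other $2k-2$ fixed vertices) produces another $T_k$-copy. Since this is a $(k-1)$-codegree neighborhood and $\delta(H) \geq \alpha n$, the swap-set has size at least $\alpha n - 2k$. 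If it meets two distinct parts $V_i$ and $V_j$, the two resulting index vectors differ by exactly $\mathbf{u}_i - \mathbf{u}_j$, so once both vectors are $(T_k, \beta)$-robust, the required transferral lies in $L^{\beta}(\mathcal{P})$.

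Robustness follows from a double-counting argument. The codegree bound $\delta(H) \geq \alpha n$ yields $d(v) \geq c \alpha n^{k-1}$ for every vertex $v$ (by summing the $(k-1)$-codegree over $(k-2)$-sets containing $v$), so $\sum_S |N(S) \cap V_p| = \sum_{v \in V_p} d(v) \geq c \alpha \delta n^k$ for each part $V_p$. Averaging over $(k-1)$-sets $S$, many $S$ simultaneously satisfy $N(S) \cap V_i \neq \emptyset$ and $N(S) \cap V_j \neq \emptyset$ for any fixed pair $(i,j)$; indeed, were $N(S) \subseteq V_1$ for every $(k-1)$-set $S$, every edge of $H$ would lie in $V_1$, contradicting $|V_p| \geq \delta n$ and $\delta(H) \geq \alpha n$ for $p \neq 1$. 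For each such $S$, picking two of its elements as $(x_k, x_{k+1})$ and extending by a core $C \in \{C : C \cup \{x_k\}, C \cup \{x_{k+1}\} \in E(H), |C| = k-1\}$---whose average size over pairs is $\Omega(\alpha^2 n^{k-1})$ by Cauchy--Schwarz on $\sum_C |N(C)|^2$---produces $\Omega(n^{2k-1})$ $T_k$-copies realizing each target index vector. Since $\beta \ll \alpha, \delta$, this count exceeds $\beta n \cdot \binom{n-1}{2k-2}$, so removal of any $W$ with $|W| \leq \beta n$ leaves a valid $T_k$-copy, establishing $(T_k, \beta)$-robustness of both vectors.

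The main obstacle is arranging the counting so that a common $(2k-2)$-skeleton supports extensions into both target parts $V_i$ and $V_j$: separately finding $T_k$-copies in each part is routine, but aligning them through a shared skeleton requires controlling where the swap-sets $N(\{x_k, x_{k+1}, x_{k+3}, \dots, x_{2k-1}\})$ land. I expect the averaging above to suffice directly, but if it fails at a particular tail position one can iterate the tail-swap across the $k-2$ tail positions and chain the resulting transferrals, using that $L^{\beta}(\mathcal{P})$ is an additive subgroup so that $\mathbf{u}_i - \mathbf{u}_h, \mathbf{u}_h - \mathbf{u}_j \in L^{\beta}(\mathcal{P})$ implies $\mathbf{u}_i - \mathbf{u}_j \in L^{\beta}(\mathcal{P})$.
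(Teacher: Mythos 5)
Your high-level inductive frame — find a transferral $\mathbf{u}_i-\mathbf{u}_j\in L^\beta(\mathcal{P})$, merge $V_i\cup V_j$ via Lemma~\ref{transferral}, and iterate $r-1$ times — does match the paper's outer structure, and the tail-swap observation about $T_k$ (replacing a vertex that lies only in the third edge) is the same geometric fact the paper exploits inside Lemma~\ref{distinctpart} and Lemma~\ref{kmorethan4}. However, the core step is not justified, and this is precisely where the paper does all of its work.

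The gap is your claim that the averaging ``$\sum_S |N(S)\cap V_p| = \sum_{v\in V_p} d(v) \geq c\alpha\delta n^k$ for each $p$'' yields many $(k-1)$-sets $S$ with $N(S)\cap V_i\neq\emptyset$ \emph{and} $N(S)\cap V_j\neq\emptyset$ for a fixed pair $(i,j)$. The two sums are over disjoint events that different $S$'s can satisfy: it is entirely consistent with the hypotheses ($\delta(H)\geq\alpha n$, $|V_p|\geq\delta n$) that every $(k-1)$-set $S$ has its codegree neighborhood $N(S)$ concentrated almost entirely in a \emph{single} part $V_{i(S)}$, with the part $i(S)$ varying with $S$. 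A $k$-graph close to $k$-partite is the prototype. The rebuttal you offer (``were $N(S)\subseteq V_1$ for every $S$, every edge would lie in $V_1$'') only excludes the degenerate case where all $S$ point to the \emph{same} single part; it does not exclude the distributed case. This distributed case is precisely why the paper introduces the $(k-1)$-graph $H^{k-1}$, the $r$-coloring $\phi_{\mathbf{v}}$, the stability result Lemma~\ref{sametpart} (few rainbow tight $2$-paths $\Rightarrow$ near-monochromatic), and the technical transfer argument Claim~\ref{fact}. Your proposed fallback of ``iterating the tail-swap across the $k-2$ tail positions'' does not escape the obstruction either, since each swap-set may land in the same single part no matter which tail position is used — and for $k=3$ there is only one tail position, which is exactly the case where the paper is forced to use a $\pm$ combination of three $T_3$-copies rather than two (Subcase~3.1 of Lemma~\ref{kmorethan4}). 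The Cauchy–Schwarz bound on cores is also only an average over all pairs $(x_k,x_{k+1})$ and does not control cores anchored at the specific pairs your construction needs, nor does it pin down a single target index vector $\mathbf{s}$ for which robustness is being established. In short, the proposal correctly identifies what must be produced (two robust index vectors differing by a transferral) and correctly uses $T_k$'s structure, but it replaces the hard dichotomy (Lemma~\ref{distinctpart} vs. Lemma~\ref{kmorethan4} via the stability analysis) with an averaging claim that fails.
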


Combined with the blowup arguments as in Lemma \ref{lem:rooted-blow-ups}, our construction of transferrals with respect to $T_k$ is reformulated as depicting how the edges in $H$ are statistically distributed, and this is handled by Lemma~\ref{distinctpart} and~Lemma \ref{kmorethan4}.

\begin{lemma}\label{distinctpart}
Let $r\ge 2, k\ge 3,t$ be positive integers, $1/n\ll\beta\ll\gamma,\zeta\ll\delta,\alpha$. Let $H$ be a $k$-graph on $n$ vertices such that $\delta(H) \ge \alpha  n$ and $\mathcal{P}=\{V_1, V_2,\ldots,V_r\}$ be a partition of $V(H)$, where $V_i$ is $(T_k,\beta n, t)$-closed and $|V_i|\ge\delta n$ for each $i\in[r]$.  For some distinct $i,j\in [r]$, if there exists a $(k-1)$-vector $\mathbf{v}$ such that  for at least $\gamma n^{k-1}$ many $(k-1)$-sets $S$  with $\mathbf{i}_{\mathcal{P}}(S)=\mathbf{v}$, we have $|N(S)\cap V_i|\ge \zeta n$ and  $|N(S)\cap V_j|\ge \zeta n$, then  $V_i\cup V_j$ is $\left(T_k,\tfrac{\beta}{2}n, (2Ck-C)t\right)$-closed, where $C=C(T_k,r)$ is a constant.
\end{lemma}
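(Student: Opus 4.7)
The plan is to apply Lemma~\ref{transferral}, so it suffices to establish the transferral $\mathbf{u}_i - \mathbf{u}_j \in L^{\beta}(\mathcal{P})$. Once this is in hand, Lemma~\ref{transferral} applied with $F = T_k$ (so $s = 2k-1$) gives that $V_i \cup V_j$ is $(T_k,\tfrac{\beta}{2}n, C(2k-1)t)$-closed, and $C(2k-1)t = (2Ck-C)t$ matches the stated conclusion.

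To build the transferral, I will classify copies of $T_k$ by their natural structure: every copy $X$ of $T_k$ in $H$ decomposes uniquely (up to swapping the two ``tip'' vertices) as $X = S \cup \{v_1, v_2\} \cup T$, where $S$ is the common $(k-1)$-subset of the first two edges, $v_1, v_2$ are distinct neighbors of $S$, and $T$ is a $(k-2)$-set disjoint from $S \cup \{v_1, v_2\}$ with $\{v_1, v_2\} \cup T \in E(H)$. Combining the $\gamma n^{k-1}$ good $(k-1)$-sets $S$ from the hypothesis (each having $|N(S) \cap V_i|, |N(S) \cap V_j| \ge \zeta n$) with the codegree bound $\delta(H) \ge \alpha n$ (which, via a standard double-counting, provides $\Omega(\alpha n^{k-2})$ valid choices of $T$ per pair $\{v_1, v_2\}$ after discarding the $O(n^{k-3})$ sets that meet $S$), I will obtain $\Omega(\gamma \zeta^2 \alpha \cdot n^{2k-1})$ copies of $T_k$ of each of the three types distinguished by $(\pi(v_1), \pi(v_2)) \in \{(i,i),(i,j),(j,j)\}$, where $\pi$ denotes the part assignment under $\mathcal{P}$.

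Next I will group copies of each type by $\mathbf{w} := \mathbf{i}_{\mathcal{P}}(T)$; there are only $O_{r,k}(1)$ feasible $\mathbf{w}$'s. The decisive step is to locate a single $\mathbf{w}^{\star}$ that works for all three types simultaneously. For this I will invoke the blow-up argument of Lemma~\ref{lem:rooted-blow-ups}, which leverages the closedness of each part together with $|V_p| \ge \delta n$ to redistribute the $(k-2)$-set $T$ across parts. The outcome is a $\mathbf{w}^{\star}$ for which each of the three types still contributes $\Omega(n^{2k-1})$ copies with $\mathbf{i}_{\mathcal{P}}(T) = \mathbf{w}^{\star}$. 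Since any vertex lies in at most $O(n^{2k-2})$ copies of $T_k$ and $\beta \ll \gamma, \zeta, \alpha$, the three $(2k-1)$-vectors
\[
\mathbf{v} + 2\mathbf{u}_i + \mathbf{w}^{\star},\quad \mathbf{v} + \mathbf{u}_i + \mathbf{u}_j + \mathbf{w}^{\star},\quad \mathbf{v} + 2\mathbf{u}_j + \mathbf{w}^{\star}
\]
are all $(T_k,\beta)$-robust. Subtracting the second from the first yields $\mathbf{u}_i - \mathbf{u}_j \in L^{\beta}(\mathcal{P})$, completing the reduction.

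The main obstacle will be the common-$\mathbf{w}^{\star}$ step. A naive per-type pigeonhole applied independently would produce distinct $\mathbf{w}^{\star}_{ii}, \mathbf{w}^{\star}_{ij}, \mathbf{w}^{\star}_{jj}$, and the corresponding robust vectors would only combine to $\mathbf{u}_i - \mathbf{u}_j + (\mathbf{w}^{\star}_{ii} - \mathbf{w}^{\star}_{ij})$, which is not a pure transferral and cannot yet be corrected inside $L^{\beta}$ at this stage of the merging process, since distinct parts $V_p, V_{p'}$ have not been merged. The rooted blow-up lemma circumvents precisely this obstacle by upgrading the raw codegree-based count of valid $T$'s into a sufficiently uniform distribution over the feasible index vectors, so that the same $\mathbf{w}^{\star}$ can be extracted across all three types.
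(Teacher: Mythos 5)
Your high-level plan --- exhibit two $(2k{-}1)$-vectors in $I^{\beta}(\mathcal{P})$ whose difference is $\mathbf{u}_i-\mathbf{u}_j$ and invoke Lemma~\ref{transferral} --- matches the paper, and your enumeration of $T_k$-copies by the parts of the two ``tip'' vertices gives the right raw counts. But the obstacle you flag at the end is a genuine gap, and your proposed fix does not close it. Lemma~\ref{lem:rooted-blow-ups} takes $\Omega(n^s)$ labelled $s$-sets inducing graphs from a finite family and returns a single $2$-blowup of one of them; it says nothing about how those copies are distributed over index vectors, it does not ``leverage the closedness of each part'', and it cannot ``redistribute'' the $(k{-}2)$-set $T$ across parts. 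After an independent per-type pigeonhole you may well land on $\mathbf{w}^{\star}_{ii}\neq\mathbf{w}^{\star}_{ij}$: the only condition on $H$ is $\delta(H)\ge\alpha n$, so the neighborhoods from which the last vertex $z$ of $T$ is drawn for type $(i,i)$ and for type $(i,j)$ can concentrate in different parts, and nothing forces a common $\mathbf{w}^{\star}$. The subtraction step then produces $\mathbf{u}_i-\mathbf{u}_j+(\mathbf{w}^{\star}_{ii}-\mathbf{w}^{\star}_{ij})$, which, as you note yourself, is not yet usable.

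The paper avoids this issue by never constructing the two $T_k$-copies independently. It counts the $\Omega(\gamma\zeta^{2}n^{k+1})$ tight $2$-paths (base $S$ with $\mathbf{i}_{\mathcal{P}}(S)=\mathbf{v}$, one tip in $V_i$, one in $V_j$), applies Lemma~\ref{lem:rooted-blow-ups} to \emph{this} family to obtain a $2$-blowup of a tight $2$-path with $a,a'\in V_i$, $b,b'\in V_j$ and two base-copies $\{u_1,\dots,u_{k-1}\}$, $\{u'_1,\dots,u'_{k-1}\}$ of identical index vector $\mathbf{v}$, and then builds the pair $T,T'$ so that they \emph{share the entire third edge} $u_{k-1}u'_{k-1}w_1\cdots w_{k-3}z$ --- in particular, the literal $(k{-}2)$-set $\{w_1,\dots,w_{k-3},z\}$ and the two tips $u_{k-1},u'_{k-1}\in V_q$. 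The only asymmetry between $T$ and $T'$ is $a\in V_i$ versus $b\in V_j$ inside the respective bases, so $\mathbf{i}_{\mathcal{P}}(T)-\mathbf{i}_{\mathcal{P}}(T')=\mathbf{u}_i-\mathbf{u}_j$ holds for every pair by construction, with no coordination of a $\mathbf{w}^{\star}$ across independent families needed. A single pigeonhole over the part $p$ of the shared vertex $z$ (ranging over $r$ values) then produces $2\beta n$ vertex-disjoint pairs with fixed $\mathbf{s},\mathbf{t}\in I^{\beta}(\mathcal{P})$ satisfying $\mathbf{s}-\mathbf{t}=\mathbf{u}_i-\mathbf{u}_j$, and Lemma~\ref{transferral} finishes. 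To repair your argument you should replace the three independent families with a maximal family of such coupled pairs, exactly as the paper does.
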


\begin{lemma}\label{kmorethan4}
Let $r\ge 2, k\ge3, t$ be positive integers,  $1/n\ll\beta\ll\gamma,\zeta\ll\delta, \alpha$. Let $H$ be a $k$-graph on $n$ vertices such that $\delta(H) \ge \alpha  n$ and $\mathcal{P}=\{V_1, V_2,\ldots,V_r\}$ be a partition of $V(H)$, where $V_i$ is $(T_k,\beta n, t)$-closed and $|V_i|\ge\delta n$ for each $i\in[r]$. 
If for any $(k-1)$-vector $\mathbf{v}$, there is $i\in[r]$ such that for all but at most $\gamma n^{k-1}$ such $(k-1)$-sets $S$ with $\mathbf{i}_{\mathcal{P}}(S)=\mathbf{v}$, we have $|N(S)\cap V_i|\ge |N(S)|-\zeta n$,  
then there are $j,l\in [r]$ such that $V_j\cup V_l$ is $\left(T_k,\tfrac{\beta}{2}n, (2Ck-C)t\right)$-closed, where $C=C(T_k,r)$ is a constant.
\end{lemma}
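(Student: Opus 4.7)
The plan is to produce distinct $j,l\in[r]$ with $\mathbf{u}_j-\mathbf{u}_l\in L^{\beta}(\mathcal{P})$; Lemma~\ref{transferral} (applied with $s=2k-1$ and the constant $C=C(T_k,r)$) then delivers $V_j\cup V_l$ as a $(T_k, \beta n/2, C(2k-1)t)$-closed set, matching the claimed parameter $(2Ck-C)t$. The hypothesis assigns to each $(k-1)$-vector $\mathbf{v}$ a \emph{dominant part} $i(\mathbf{v})\in[r]$ containing all but at most $\zeta n$ neighbours of a typical $(k-1)$-set of type $\mathbf{v}$.

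As a first step I would show that the function $i$ is surjective. If some $j\in[r]$ never appears as a dominant part, then $i(\mathbf{v})\neq j$ for every type, so outside the at most $\gamma n^{k-1}$ atypical $(k-1)$-sets of any fixed type we have $|N(S)\cap V_j|\le \zeta n$. Summing over all $(k-1)$-sets $S$ and exchanging order of summation via $\sum_S |N(S)\cap V_j|=\sum_{v\in V_j} d(v)$ gives an upper bound of $O((\zeta+\gamma)n^{k})$. On the other hand, $\delta(H)\ge \alpha n$ forces $d(v)\ge \alpha n^{k-1}/(k-1)!$ for every vertex $v$, and so $\sum_{v\in V_j}d(v)\ge |V_j|\alpha n^{k-1}/(k-1)!\ge \delta\alpha n^{k}/(k-1)!$. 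Since $\zeta,\gamma\ll\delta,\alpha$, these bounds are incompatible, so $i$ must be surjective.

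Next, for each core type $\mathbf{v}_A$ with $i_0:=i(\mathbf{v}_A)$ I construct robust $(2k-1)$-vectors. A typical core $A$ of type $\mathbf{v}_A$ has $|N(A)\cap V_{i_0}|\ge \alpha n/2$, providing $\Omega(n^{2})$ apex pairs $\{x,y\}\subseteq V_{i_0}\cap N(A)$; for each such pair the codegree condition yields $\Omega(n^{k-2})$ completions $B$ with $\{x,y\}\cup B\in E(H)$, so that $A\cup\{x,y\}\cup B$ spans a copy of $T_k$. A standard supersaturation/blow-up step (as in Lemma~\ref{lem:rooted-blow-ups}) then certifies that $\mathbf{w}(\mathbf{v}_A,\mathbf{v}_B):=\mathbf{v}_A+2\mathbf{u}_{i_0}+\mathbf{v}_B$ belongs to $I^{\beta}(\mathcal{P})$ for every $(k-2)$-type $\mathbf{v}_B$ realised by a positive density of such completions.

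The main obstacle is to exhibit two such vectors whose difference is a single transferral $\mathbf{u}_j-\mathbf{u}_l$. I would proceed by case analysis. In the easy case, for some $\mathbf{v}_A$ the $(k-2)$-type distribution of valid completions contains two types $\mathbf{v}_B,\mathbf{v}_B'$ with $\mathbf{v}_B-\mathbf{v}_B'=\mathbf{u}_j-\mathbf{u}_l$, and subtracting the two $\mathbf{w}$'s gives the transferral directly. In the hard case, for every $\mathbf{v}_A$ the completions $B$ concentrate on a single canonical $(k-2)$-type, forced rigidly by iterating the chaining rule implicit in the hypothesis: for each $b\in B$ the part containing $b$ must be $V_{i(2\mathbf{u}_{i_0}+\mathbf{v}_{B\setminus\{b\}})}$. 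I would then compare the robust vectors $\mathbf{w}(\mathbf{v}_A)$ across varying core types and show, using surjectivity of $i$ and pigeonhole over the constantly many $(k-1)$-types, that their pairwise differences in $L^{\beta}(\mathcal{P})$ must contain some $\mathbf{u}_j-\mathbf{u}_l$. The delicate subcase is the rigid alternating configuration (e.g.\ $r=2$ with $i$ alternating on adjacent types), which is resolved by tracking the contribution $\mathbf{v}_A$ and the shift $2\mathbf{u}_{i(\mathbf{v}_A)}$ in tandem so that their combined difference collapses to a single transferral for some pair. Once the transferral is secured, Lemma~\ref{transferral} concludes the proof.
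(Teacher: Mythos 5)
Your sketch identifies two of the right ingredients --- the chaining rule (which the paper isolates as Claim~\ref{fact}) and the plan to certify index vectors of $T_k$-copies as $(T_k,\beta)$-robust and then difference them to land a transferral --- but the step that actually does the work is missing. You acknowledge a ``hard case'' in which, for every core type $\mathbf{v}_A$, the completion types are rigidly determined by iterating the chaining rule, and you resolve it with ``tracking the contribution $\mathbf{v}_A$ and the shift $2\mathbf{u}_{i(\mathbf{v}_A)}$ in tandem so that their combined difference collapses to a single transferral.'' That sentence is precisely the content of the lemma; it is not a proof. In the rigid regime, varying the completion does not help, and comparing $\mathbf{w}(\mathbf{v}_A)$ and $\mathbf{w}(\mathbf{v}_A')$ for two different cores produces a difference $(\mathbf{v}_A-\mathbf{v}_A')+2(\mathbf{u}_{i(\mathbf{v}_A)}-\mathbf{u}_{i(\mathbf{v}_A')})+(\mathbf{v}_B-\mathbf{v}_B')$ that has no reason to be a single $\mathbf{u}_j-\mathbf{u}_l$: the three summands can overlap, cancel partially, or combine into a vector of support size $\ge 3$. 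Pigeonhole over constantly many types does not rescue this, because the obstruction is algebraic (the shape of the sum), not probabilistic.

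The paper resolves exactly this by a mechanism your framing does not capture. Rather than fixing one core and varying completions, it fixes a \emph{single shared third edge} and attaches to it two \emph{different} cores $A$ and $B$ whose types $\mathbf{a}=\mathbf{v}+\mathbf{u}_i-\mathbf{u}_1$ and $\mathbf{b}=\mathbf{v}+\mathbf{u}_i-\mathbf{u}_2$ are obtained from a common $\mathbf{v}$ by two applications of Claim~\ref{fact} in opposite directions. Because the third edge is identical in both $T_k$'s, all of its vertices cancel in $\mathbf{i}_{\mathcal{P}}(T)-\mathbf{i}_{\mathcal{P}}(T')$, leaving only $\mathbf{a}-\mathbf{b}=\mathbf{u}_2-\mathbf{u}_1$. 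Making this construction legal forces the source vector $\mathbf{v}$ to have two coordinates $\ge 2$ (so that $\mathbf{a},\mathbf{b}$ are nonnegative) and enough room for the $k-5$ auxiliary vertices $v_5,\dots,v_{k-1}$; that is why the paper must split into $k\ge 5$, $k=4$, and $k=3$, and why in one $k=3$ subcase a \emph{triple} $(T,T',T'')$ with $2\mathbf{i}_\mathcal{P}(T)-\mathbf{i}_\mathcal{P}(T')-\mathbf{i}_\mathcal{P}(T'')$ equal to a transferral is needed instead of a pair. Your proposal has no analogue of the shared-edge coupling and never acknowledges the $k$-dependence of the construction, so the argument as written would not close. (Your surjectivity observation is true and follows from the chaining rule, but the paper never needs it as a separate step.)
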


Now we are ready to give a proof of Lemma~\ref{lem:absorbing}.
\begin{proof}[{\bf Proof of Lemma~\ref{lem:absorbing}}]
Choose constants $1/n\ll \beta'\ll\delta'\ll \alpha$ and let $H$ be an $n$-vertex  $k$-graph with $\de(H)\ge \alpha n$.
By Lemma~\ref{patition1} applied with $\delta=2\delta'$, 
every vertex in $V(H)$ is $(T_k,2\delta' n,1)$-reachable to at least $2\delta' n$ vertices of $V(H)$. Lemma~\ref{partition}, 
implies a partition $\mathcal{P}= \{V_1, V_2,\ldots,V_r\}$ of $V(H)$ for some integer $r\le \lceil{\tfrac{1}{2\delta'}\rceil}$ such that for each $i\in[r]$, $V_i$ is $(T_k,\beta' n, t)$-closed and $|V_i|\ge  \delta' n$. By Lemma~\ref{closed} applied with $\beta=\beta'$, $\delta=\delta'$, there exists a constant $C=C(T_k,r)$ such that $V(H)$ is~$(T_k, \tfrac{\beta'}{2^{r-1}}n, (2Ck-C)^{r-1}t)$-closed.
Now note that for any set $W$ of at most $\tfrac{\beta'}{2^{r-1}}n$ vertices, there is a copy of $T_k$ in $V(H)\setminus W$. Indeed, this follows by the minimum codegree condition of $H$. Therefore, by Lemma \ref{close} applied with $r=1$, $\beta=\tfrac{\beta'}{2^{r-1}}$, $s=2k-1$ and $F=T_k$, every $S\in \binom{V(H)}{2k-1}$ has at least $\gamma n$ vertex-disjoint $(T_k,t)$-absorbers for $\gamma=\tfrac{\beta}{(2k-1)^3t}$. Furthermore, applying Lemma \ref{bip-temp}, we have that $H$ contains a $(T_k,\xi)$-absorbing set of size at most $\gamma n$ as desired.        
\end{proof}

\subsubsection{Proofs of Lemma~\ref{closed}-\ref{kmorethan4}}
We denote by $F[t]$ the \emph{$t$-blowup of $F$} obtained from $F$ by replacing every vertex of $F$ by a vertex set of size $t$ and every hyperedge of $F$ by a copy of a complete $k$-partite $k$-graph.
Lemma~\ref{lem:rooted-blow-ups} is obtained from \cite[Lemma 4.3]{blowup}.
\begin{lemma}[\cite{blowup}, Lemma 4.3]\label{lem:rooted-blow-ups}
Let $1/n \ll 1/s,1/k$ and $\mu>0$. Let~$H$ be an $n$-vertex $k$-graph, $\mathcal{F}$ be a finite family of $s$-vertex $k$-graphs.
Suppose that there are at least $\mu n^{s}$ subsets $S \subseteq V(H)$ of size $s$ such that $H[S] \in \mathcal{F}$.
Then there is a family $\mathcal{U} = \{U_i\}_{i \in [s]}$ of pairwise disjoint $2$-sets such that the $s$-partite $k$-graph induced by $\mathcal{U}$ contains a $2$-blowup of some $F\in \mathcal{F}$.
\end{lemma}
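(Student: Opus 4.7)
The approach is a standard supersaturation argument. First, since $|\mathcal{F}|$ depends only on $s$ and $k$, the pigeonhole principle yields a single $F \in \mathcal{F}$ such that at least $\mu n^s / |\mathcal{F}|$ of the $s$-subsets $S \subseteq V(H)$ satisfy $H[S] \cong F$. Replacing $\mu$ by $\mu / |\mathcal{F}|$, I may henceforth assume $\mathcal{F} = \{F\}$ consists of a single $k$-graph.

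Next, fix a bijection $V(F) = [s]$ and count injective maps $\phi:[s] \to V(H)$ which are isomorphisms onto $H[\phi([s])]$. Each unordered copy of $F$ yields $s!/|\mathrm{Aut}(F)|$ such labellings, so the number of such \emph{$F$-labelled tuples} is at least $c_0 n^s$ for some $c_0 = c_0(\mu, F) > 0$. I then aim to find two $F$-labelled tuples $(u_1,\dots,u_s)$ and $(w_1,\dots,w_s)$ on $2s$ distinct vertices such that for every edge $e = \{i_1,\dots,i_k\} \in E(F)$ and every $\epsilon \in \{0,1\}^k$, the ``mixed'' $k$-set obtained by taking $u_{i_j}$ when $\epsilon_j = 0$ and $w_{i_j}$ otherwise lies in $E(H)$. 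Setting $U_i = \{u_i, w_i\}$ then directly exhibits a $2$-blowup of $F$ in the $s$-partite $k$-graph induced by $\mathcal{U}$, as required.

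To produce such a pair of tuples, I would iterate over the coordinates $j = 1, 2, \dots, s$. Having fixed compatible pairs $U_1, \dots, U_{j-1}$ (for which all partial transversals of every $e \in E(F)$ already lie in $H$), I use a Cauchy--Schwarz / averaging step to select $U_j = \{u_j, w_j\}$ so that sufficiently many compatible extensions $(u_{j+1}, w_{j+1}, \dots, u_s, w_s)$ remain. Each edge of $F$ containing coordinate $j$ contributes one factor in the counting inequality (through $\sum_{x,y} f(x)f(y) \ge (\sum_x f(x))^2/n$ applied slice-wise); since $F$ is fixed, only a constant number of edges are affected per step and the surviving count decreases by at most a constant factor. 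After $s$ iterations, a positive density of valid configurations remains, so at least one exists for sufficiently large $n$.

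The main obstacle is arranging the Cauchy--Schwarz steps so that each edge of $F$ is charged the correct number of times without incurring losses that drive the total density to zero. A clean way to sidestep this bookkeeping is to reinterpret the desired configuration as a labelled homomorphism from $F[2]$ into $H$ and appeal directly to a standard hypergraph supersaturation lemma (e.g., Erd\H{o}s' theorem on complete $k$-partite $k$-graphs, applied edge by edge of $F$): positive copy density of $F$ in $H$ forces positive copy density of $F[2]$, any one of which provides the desired family $\mathcal{U}$.
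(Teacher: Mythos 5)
The paper cites this lemma from \cite{blowup} without reproducing a proof, so there is no in-paper argument to compare against. Your proposal is the standard supersaturation argument and is correct in outline: pigeonhole to a single $F$, pass to labelled copies, and upgrade positive $F$-density to positive $F[2]$-density. The clean implementation (which you gesture at) is to form the auxiliary $s$-partite $s$-uniform hypergraph $\mathcal{H}$ on $s$ disjoint copies of $V(H)$, whose edges are the $\Omega(n^s)$ labelled copies of $F$, and apply Erd\H{o}s' hypergraph K\H{o}v\'ari--S\'os--Tur\'an theorem to $\mathcal{H}$ to extract a $K^{(s)}_s(2,\dots,2)$; since each of its $2^s$ transversals is a labelled copy of $F$, every mixed $k$-set arising from an edge of $F$ extends to one of these transversals and hence lies in $E(H)$, giving the desired $2$-blowup.

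Two points need fixing. First, the parenthetical ``applied edge by edge of $F$'' is not a valid shortcut: running Erd\H{o}s' theorem separately for each edge of $F$ would yield $2$-blowups of individual edges with no guarantee that they agree on the pairs sitting over coordinates shared by different edges of $F$; the coherence must come from applying the theorem once to the single $s$-uniform auxiliary hypergraph $\mathcal{H}$. (Your iterative Cauchy--Schwarz over coordinates is in fact precisely the proof of Erd\H{o}s' theorem for $\mathcal{H}$, and the bookkeeping you worry about becomes routine once one works there, since each doubling step is a single convexity estimate.) Second, you should explicitly ensure the $U_i$ are pairwise disjoint as subsets of $V(H)$: the $s$ parts of $\mathcal{H}$ are formal disjoint copies of $V(H)$, so a $K^{(s)}_s(2)$ in $\mathcal{H}$ could a priori reuse a vertex of $H$ in two different coordinates. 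This is handled by noting that $\mathcal{H}$ contains $\Omega(n^{2s})$ copies of $K^{(s)}_s(2)$ while only $O(n^{2s-1})$ of them are degenerate, so a non-degenerate copy exists for large $n$.
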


Given a $k$-vector $\mathbf{x} = (x_1, \dots, x_r)$, let $\mathcal{P} = \{V_1, V_2, \ldots, V_r\}$ be a partition of the vertex set of a $k$-graph $H$ such that $E(H) \subseteq \{ S \subseteq V(H) : \mathbf{i}_{\mathcal{P}}(S) = \mathbf{x} \}$.
We define the \emph{$\mathbf{x}$-density} of $H$ as $d^{\mathbf{x}}(H) := e(H) / \prod_{i=1}^r \binom{|V_i|}{x_i}$.
We say that $H$ is \emph{$(\epsilon, \mathbf{x})$-complete} if $d^{\mathbf{x}}(H) \ge 1 - \epsilon$. When $\mathbf{x}$ is clear from context, we simply say that $H$ is \emph{$\epsilon$-complete}.
Let $c : E(H) \rightarrow [r]$ be an $r$-edge coloring of $H$. We say that $H$ is \emph{$\zeta$-monochromatic with respect to $c$} if there exists some $i \in [r]$ such that $|c^{-1}(i)| / e(H) \ge 1 - \zeta$.
We will use the following stability result, whose proof is deferred to the end of this section.

Let $H$ be an edge-colored $k$-graph. 
A tight 2-path in $H$ is \textit{rainbow} if its two edges have different colors.

\begin{lemma}\label{sametpart}
Let $k,r$ be  positive integers and $1/n \ll \epsilon_k,\, \xi_k \ll \zeta_k,\, \delta$.  
Let $\mathbf{x} = (x_1, \dots, x_r)$ be a $k$-vector, and $\mathcal{P} = \{V_1, V_2, \dots, V_r\}$ be a partition of the vertex set of an $n$-vertex $k$-graph $H$ such that $E(H) \subseteq \{ S \subseteq V(H) : \mathbf{i}_{\mathcal{P}}(S) = \mathbf{x} \}$, where each part satisfies $|V_j| \ge \delta n$ for all $j \in [r]$.  
Suppose $H$ is an $(\epsilon_k, \mathbf{x})$-complete $r$-edge-colored $k$-graph on $n$ vertices, which contains fewer than $\xi_k n^{k+1}$ rainbow tight $2$-paths.  
Then $H$ is $\zeta_k$-monochromatic.
\end{lemma}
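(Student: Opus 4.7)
The plan is to combine $(\epsilon_k,\mathbf{x})$-completeness---which forces typical $(k-1)$-sets to have large codegree---with the few-rainbow-$2$-paths hypothesis, which via a Tur\'an-type convexity inequality forces each such $(k-1)$-set to be almost monochromatic in its extensions, and then to propagate this local monochromaticity to a global majority colour by invoking completeness a second time.

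First I would establish that most $(k-1)$-sets are \emph{dense}. For each $j\in[r]$ with $x_j\ge 1$, let $\mathcal{S}_j$ be the family of $(k-1)$-sets of index vector $\mathbf{x}-\mathbf{u}_j$; every edge has exactly $x_j$ subsets in $\mathcal{S}_j$, and the identity $x_j\binom{|V_j|}{x_j}=(|V_j|-x_j+1)\binom{|V_j|}{x_j-1}$ combined with $(\epsilon_k,\mathbf{x})$-completeness gives an average codegree over $\mathcal{S}_j$ of at least $(1-\epsilon_k)(|V_j|-x_j+1)$. By Markov's inequality, all but $\sqrt{\epsilon_k}|\mathcal{S}_j|$ sets $S\in\mathcal{S}_j$ satisfy $d_H(S)\ge\delta n/2$; call these dense. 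Writing $d_i(S)$ for the number of colour-$i$ extensions of $S$ and $d^*(S):=\max_i d_i(S)$, the rainbow tight $2$-path count at $S$ equals $\tfrac{1}{2}(d(S)^2-\sum_i d_i(S)^2)\ge d^*(S)(d(S)-d^*(S))$; with $\tau:=\zeta_k^{1/3}$, a dense $S$ with $d^*(S)<(1-\tau)d(S)$ contributes $\Omega(\tau n^2)$ rainbow $2$-paths, so fewer than $O(\xi_k/\tau)n^{k-1}$ dense $S$ can fail to have a \emph{majority colour} $c^*(S)\in[r]$ satisfying $d_{c^*(S)}(S)\ge(1-\tau)d(S)$.

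It then remains to show that $c^*$ is constant on almost all dense $S$. Let $i_0\in[r]$ maximise $|H_{i_0}|$; suppose for contradiction that at least $\zeta_k n^{k-1}/2$ dense $(k-1)$-sets have $c^*(S)\ne i_0$. Pigeonholing on the type index fixes some $j$ with $\Omega(\zeta_k n^{k-1})$ \emph{bad} sets (where $c^*(S)\ne i_0$) and $\Omega(n^{k-1})$ \emph{good} sets (where $c^*(S')=i_0$) inside $\mathcal{S}_j$. For any pair $(S,S')$ in $\mathcal{S}_j$ with opposite majorities and $|S\cap S'|=k-2$---writing $T=S\cap S'$, $S=T\cup\{u\}$, $S'=T\cup\{u'\}$, $u,u'\in V_\ell$---density of $S$ and $S'$ with inclusion--exclusion gives $|N(S)\cap N(S')\cap V_j|\ge(1-2\sqrt{\epsilon_k})|V_j|$, and outside an $O(\tau n)$ exceptional set each common extension $v\in V_j$ produces a rainbow tight $2$-path $\{T\cup\{u,v\},T\cup\{u',v\}\}$ at apex $T\cup\{v\}$. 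A double-count over $(k-2)$-sets, using $(\epsilon_k,\mathbf{x})$-completeness at the $(k-2)$-codegree level to exclude a pathological partition of the $(k-2)$-links into bad-only and good-only, yields $\Omega(\zeta_k n^k)$ such pairs, whence $\Omega(\zeta_k n^{k+1})$ rainbow tight $2$-paths, contradicting $\xi_k n^{k+1}$ since $\xi_k\ll\zeta_k$. Summing $d_{i_0}(S)\ge(1-\tau)d(S)$ over the majority-$i_0$ dense $S$ then yields $|H_{i_0}|\ge(1-\zeta_k)e(H)$.

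The main obstacle is the counting in the contradiction step: one must certify that, across $(k-2)$-set links, the bad and good families mix enough to produce $\Omega(\zeta_k n^k)$ pairs sharing $k-2$ vertices with opposite majorities. This is essentially an edge-expansion property for the tight $2$-path graph of $H$, and $(\epsilon_k,\mathbf{x})$-completeness applied to the density of $(k-2)$-set codegrees is the key tool for ruling out the pathological ``disjoint links'' configuration.
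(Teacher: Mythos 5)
Your proposal takes a genuinely different route from the paper: you argue directly, whereas the paper proceeds by induction on $k$. Specifically, the paper assigns a dominant colour to each dense $(k-1)$-set of index $\hat{\mathbf{x}} = \mathbf{x} - \mathbf{u}_1$, builds the auxiliary $(k-1)$-graph $H^{k-1}$ with those sets as edges coloured by dominant colour, verifies that $H^{k-1}$ is $(\epsilon_{k-1},\hat{\mathbf{x}})$-complete with few rainbow tight $2$-paths (by lifting any such $2$-path to many rainbow tight $2$-paths of $H^k$ through common extensions in $V_1$), and then invokes the inductive hypothesis. The only place the paper needs a ``many mixed pairs'' count directly is the base case $k=2$, where two vertices always have $\Omega(n)$ common neighbours, so the expansion is trivial. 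Your proposal, by contrast, tries to establish the analogous count at the top level: if $\Omega(n^{k-1})$ bad and $\Omega(n^{k-1})$ good $(k-1)$-sets coexist, there are $\Omega(n^{k})$ bad--good pairs sharing $k-2$ vertices.

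That step is where there is a genuine gap. You flag it yourself and appeal to ``$(\epsilon_k,\mathbf{x})$-completeness at the $(k-2)$-codegree level'' to exclude a pathological partition of links. But completeness of $H$ controls only how many $(k-1)$-sets are \emph{dense}; it says nothing directly about how the bad/good labelling distributes across links. A single application of codegree-completeness at the $(k-2)$-level does rule out the pathology when $k=3$ (it forces too many non-dense pairs across $V_{\mathrm{bad}}$ and $V_{\mathrm{good}}$), but for $k\ge 4$ it merely pushes the problem one level down: you would deduce that $\Omega(n^{k-2})$ $(k-2)$-sets are ``mostly-bad'' and $\Omega(n^{k-2})$ are ``mostly-good'', and you would again need to show those two families mix across $(k-3)$-common-intersections. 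Carried out fully, this becomes a recursion through all levels $k-2,k-3,\dots,1$, with error terms accruing at each level; at the bottom (singletons) the mixing is trivial, and the contradiction propagates back up. In effect this re-derives the paper's induction in another guise, so it is not a free step. (A spectral route is also not free: the second eigenvalue of the Johnson graph $J(m,\ell)$ is roughly $(1-1/\ell)$ of the degree, so for $\ell\ge 2$ the expander mixing lemma alone does not certify $e(A,B)>0$ for all $\Omega(1)$-density disjoint $A,B$; one must additionally exploit that $A\cup B$ covers almost everything, which again feeds back into a level-by-level argument.) As written, the proposal therefore does not close the key step, and the route via which you claim to close it does not work for general $k$.
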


\begin{proof}[{\bf Proof of Lemma~\ref{closed}}]
Choose constants $\gamma,\xi_{k-1},\epsilon_{k-1},\zeta,\zeta_{k-1}$ satisfying $\beta \ll \xi_{k-1}\ll \gamma,\zeta\ll \epsilon_{k-1}\ll \zeta_{k-1} \ll \delta,\alpha$ and let $H$ be a $k$-graph on $n$ vertices such that $\delta(H) \ge \alpha  n$ and $\mathcal{P}=\{V_1, V_2,\ldots,V_r\}$ be a partition of $V(H)$, where $V_i$ is $(T_k,\beta n, t)$-closed for each $i\in[r]$. We may assume that $r\ge 2$, or else we are done. 
Lemma~\ref{distinctpart} implies that whenever there exists a $(k-1)$-vector $\mathbf{v}$ for which at least $\gamma n^{k-1}$ many $(k-1)$-sets $S$ with $\mathbf{i}_{\mathcal{P}}(S) = \mathbf{v}$ satisfy $|N(S) \cap V_i| \geq \zeta n$ and $|N(S) \cap V_j| \geq \zeta n$ for some pair $i \neq j$, then there exists a constant $C_r=C_r(T_k,r)$ such that  $V_i \cup V_j$ becomes $\left(T_k, \tfrac{\beta}{2}n, (2C_rk-C_r)t\right)$-closed.

Assume that there exists no such a $(k-1)$-vector with the required property in Lemma~\ref{distinctpart}. 
Let $\mathcal{S}^{i}_{\mathbf{v}}$ denote the collection of all $(k-1)$-sets $S$ with index vector $\mathbf{v}=(x_1,\ldots,x_r)$ satisfying $|N(S)\cap V_i| \geq |N(S)| - (r-1)\zeta n$, and define $\mathcal{S}_{\mathbf{v}} = \bigcup_{i=1}^r \mathcal{S}^i_{\mathbf{v}}$. We construct a $(k-1)$-graph $H^{k-1}$ on  $V(H)$ with $E(H^{k-1}) = \mathcal{S}_{\mathbf{v}}$. 

The hypergraph $H^{k-1}$ satisfies 
\[
e(H^{k-1}) \geq \prod_{i=1}^r \binom{|V_i|}{x_i} - \gamma n^{k-1} > (1-\epsilon_{k-1})\prod_{j=1}^r \binom{|V_j|}{x_j},
\]
which establishes that $H^{k-1}$ is $(\epsilon_{k-1},\mathbf{v})$-complete. 

We further define an edge coloring $\phi_{\mathbf{v}}: \mathcal{S}_{\mathbf{v}} \to [r]$ on $H^{k-1}$  where each $(k-1)$-set $S$ is assigned color $\phi_{\mathbf{v}}(S) = i$ if and only if  $S\in \mathcal{S}^i_{\mathbf{v}}$.
Let $R$ the number of rainbow tight $2$-paths in the resulting edge-colored $H^{k-1}$.
We split the argument into two separate cases.

{\bf Case 1.} $R \geq \xi_{k-1} n^{k}$. There exist two colors, without loss of generality, assume colors $1$ and $2$, such that $H^{k-1}$ contains at least $\xi_{k-1} n^{k}/\binom{r}{2}$  rainbow tight 2-paths using these two colors. 

Recall $\textbf{v}=(x_1,\ldots,x_r)$ and 
assume $x_q\neq0$ for some $q\in[r]$.  Let $\mathcal{T}$ be a maximal family of pairwise vertex-disjoint pairs $\{T, T'\}$ of copies of $T_k$ (i.e., $V(T\cup T')\cap V(\widetilde{T}\cup \widetilde{T}')=\emptyset$ for any $(T, T'),(\widetilde{T}, \widetilde{T}')\in \mathcal{T}$)  such that  $\mathbf{i}_{\mathcal{P}}(T)-\mathbf{i}_{\mathcal{P}}(T')=\mathbf{u}_1-\mathbf{u}_2$ and $\mathbf{i}_{\mathcal{P}}(T)\in\{\sum_{l=1}^{r}x_l\mathbf{u}_l+(k-2)\mathbf{u}_1+\mathbf{u}_p+\mathbf{u}_q: p\in [r]\}$.

We claim that $|\mathcal{T}|\ge 2r\beta n$.
Otherwise, $|V(\mathcal{T})|< (8k-4)r\beta n$.   
 Observing that there are at most $|V(\mathcal{T})| \cdot n^{k-1} < (8k-4)r\beta n^{k}$ rainbow tight 2-paths intersecting $V(\mathcal{T})$, it follows that $H^{k-1} \setminus V(\mathcal{T})$ contains at least $\xi_{k-1} n^{k}/\binom{r}{2}  - (8k-4)r\beta  n^{k} $ rainbow tight $2$-paths whose edges are colored 1 and 2. 
There exists $i\in [r]$ such that at least $(\xi_{k-1} n^{k}/\binom{r}{2}  - (8k-4)r\beta  n^{k})/r$ rainbow tight 2-paths  $P=e_1\cup e_2$ in $H^{k-1}$  with the symmetric difference of $e_1$ and $e_2$ in $ V_i$ and $\phi_{\mathbf{v}}(e_1)=1$, $\phi_{\mathbf{v}}(e_2)=2$.
Let $\mathcal{F}$ be the family of a $2$-path $P'=(e_1\cup\{v_1\})\cup (e_2\cup\{v_2\})$ in $H$ with the symmetric difference of $e_1$ and $e_2$ in $ V_i$ with $v_1\in N(e_1)\cap (V_1\setminus V(\mathcal{T}))$ and $v_2\in N(e_2)\cap (V_2\setminus V(\mathcal{T}))$.  Note that $|N(e_i)\cap (V_i\setminus V(\mathcal{T}))|\ge\alpha n- (r-1)\zeta n-(8k-4)r\beta n$ for $i=\{1,2\}$.
Then $|\mathcal{F}|\ge (\xi_{k-1}  n^k/\binom{r}{2}  - (8k-4)r\beta n^k)(\alpha n-(r-1)\zeta n-(8k-4)r\beta n)^2/r$.
 Applying Lemma~\ref{lem:rooted-blow-ups}, we obtain a 2-blowup $F^*$ of some $F \in \mathcal{F}$ with vertex set $V(F^*) = \{a,a',b,b',u,u',v,v',u_1,\ldots,u_{k-2},u'_1,\ldots,u'_{k-2}\}$, where 
 $a,a'\in V_1\setminus V(\mathcal{T})$, $b,b'\in V_2\setminus V(\mathcal{T})$,
 $u,u',v,v' \in V_i\setminus V(\mathcal{T})$, $u_1,u'_1\in V_q\setminus V(\mathcal{T})$, $\{u,u_1,\dots,u_{k-2},a\} \in E(H)$, $\{v,u_1,\dots,u_{k-2},b\}\in E(H)$.

Take $w_1,\ldots,w_{k-3}\in V_1\setminus (V(\mathcal{T})\cup V(F^*))$ and $z\in N(w_1\ldots w_{k-3}u_1u'_1)\setminus (V(\mathcal{T}\cup V(F^*))$.  Now, we get two copies of $T_k$
$$T=\{au_1\ldots u_{k-2}u,au'_1u_2\ldots u_{k-2}u,u_1u'_1w_1\ldots w_{k-3}z\},$$ 
$$T'=\{b u_1 \ldots u_{k-2}v, bu'_1u_2\ldots u_{k-2}v,u_1u'_1w_1\ldots w_{k-3}z\}.$$
Then, $V(T\cup T')\cap V(\mathcal{T})=\emptyset$ and $\mathbf{i}_{\mathcal{P}}(T)-\mathbf{i}_{\mathcal{P}}(T')=\mathbf{u}_1-\mathbf{u}_2$, which contradicts the maximality of $\mathcal{T}$. Hence, $|\mathcal{T}|\ge 2r\beta n$.
By the pigeonhole principle, there is $p\in [r]$ such that $$\mathbf{s}:=\mathbf{i}_{\mathcal{P}}(T_k)=\sum_{l=1}^{r}x_l\mathbf{u}_l+(k-2)\mathbf{u}_1+\mathbf{u}_p+\mathbf{u}_q,$$  
$$\mathbf{t}:=\mathbf{i}_{\mathcal{P}}(T'_k)=\sum_{l=1}^{r}x_l\mathbf{u}_l+(k-3)\mathbf{u}_1+\mathbf{u}_2+\mathbf{u}_p+\mathbf{u}_q$$ for at least $2\beta n$ pairs of $\mathcal{T}$.
Therefore, $\mathbf{s},\mathbf{t}\in I^{\beta}(\mathcal{P})$, and then Lemma \ref{transferral} implies $V_1\cup V_2$ is $\left(T_k,\tfrac{\beta}{2}n, (2C_rk-C_r)t\right)$-closed.

{\bf Case 2.} $R < \xi_{k-1} n^{k}$. By Lemma~\ref{sametpart}, $H^{k-1}$ becomes $\zeta_{k-1}$-monochromatic with respect to $\phi_{\mathbf{v}}$. 
Hence, we have 
\begin{equation*}
\begin{aligned}
|\phi_{\mathbf{v}}^{-1}(i)| &\ge (1-\zeta_{k-1})e(H^{k-1})\\
&\ge (1-\zeta_{k-1})(1-\epsilon_{k-1})\prod_{j\in [r]}\binom{|V_j|}{x_j}.\\
\end{aligned}
\end{equation*}
Therefore, 
for all but at most $(\zeta_{k-1}+\epsilon_{k-1})\prod_{j\in [r]}\binom{|V_j|}{x_j}\le(\zeta_{k-1}+\epsilon_{k-1}) n^{k-1}$ many $(k-1)$-sets $S$ with index vector $\mathbf{v}$ satisfy $|N(S) \cap V_i| > |N(S)| - (r-1)\zeta n$. 
Since $1/n\ll \beta\ll\zeta\ll\epsilon_{k-1}\ll \zeta_{k-1}\ll \delta, \alpha$, applying Lemma~\ref{kmorethan4} with $\gamma=\zeta_{k-1}+\epsilon_{k-1}$, we obtain there exist $j,l \in [r]$ such that $V_j \cup V_l$ is $\left(T_k, \tfrac{\beta}{2}n, (2C_rk-C_r)t\right)$-closed.

In all cases, we can construct a refined partition $V(H)=(V_1,\ldots,V_{r-1})$  such that $\delta(H) \ge \alpha  n$, where $V_i$ is $\left(T_k,\tfrac{\beta}{2}n, (2C_rk-C_r)t\right)$-closed and $|V_i|\ge\delta n$ for each $i\in[r-1]$. 
Iterating this argument at most $r-1$ times with the current partition yields that $V(H)$ is $\left(T_k, \tfrac{\beta}{2^{r-1}}n, (2Ck-C)^{r-1}t\right)$-closed, where $C=\max\{C_2,\cdots,C_r\}$.
\end{proof}

\begin{proof}[{\bf Proof of Lemma~\ref{distinctpart}}]
By the assumption of Lemma~\ref{distinctpart}, we have $\beta\ll \gamma,\zeta\ll \delta,\alpha$.
Consider a $(k-1)$-vector $\mathbf{v}=(x_1,\ldots,x_r)$ as in the assumption. There are at least $\gamma n^{k-1}$ $(k-1)$-sets $S$ with $\mathbf{i}_{\mathcal{P}}(S)=\mathbf{v}$ for which $|N(S)\cap V_i|\ge \zeta n$ and $|N(S)\cap V_j|\ge \zeta n$ hold simultaneously for some $i\neq j\in [r]$. 
Fix such a $(k-1)$-set $S=\{v_1,\ldots,v_{k-1}\}$. Assume $v_{k-1}\in V_q$ for some $q\in [r]$. Let $\mathcal{T}$ be a maximal family of pairwise vertex-disjoint pairs $\{T, T'\}$ of copies of $T_k$  such that $\mathbf{i}_{\mathcal{P}}(T)-\mathbf{i}_{\mathcal{P}}(T')=\mathbf{u}_i-\mathbf{u}_j$ with $\mathbf{i}_{\mathcal{P}}(T)\in\{\sum_{l=1}^{r}x_l\mathbf{u}_l+(k-2)\mathbf{u}_i+\mathbf{u}_p+\mathbf{u}_q: p\in [r]\}$.

We claim that $|\mathcal{T}| \geq 2r\beta n$. Suppose for contradiction that $|\mathcal{T}| < 2r\beta n$, which implies $|V(\mathcal{T})| < (8k-4)r\beta n$.  
There are at most $|V(\mathcal{T})| \cdot n^{k} < (8k-4)r\beta n^{k+1}$ tight 2-paths intersecting $V(\mathcal{T})$. It follows from $\beta \ll\gamma,\zeta$ that $V(H)\setminus V(\mathcal{T})$ contains at least
\[
\gamma n^{k-1} \cdot \zeta n \cdot \zeta n - (8k-4)r\beta n^{k+1} \geq \frac{\gamma}{2}\zeta^2 n^{k+1}
\]
tight 2-paths. 
Let $\mathcal{F}$ be the family of a tight 2-path in $H\setminus V(\mathcal{T})$. 
Applying Lemma~\ref{lem:rooted-blow-ups} with $\mu=\frac{\gamma}{2}\zeta^2$ and $s=k+1$, we obtain a 2-blowup $F^*$ of some $F \in \mathcal{F}$ with vertex set $V(F^*) = \{a,a',b,b',u_1,\ldots,u_{k-1},u'_1,\ldots,u'_{k-1}\}$, where $a,a' \in V_i\setminus V(\mathcal{T})$, $b,b' \in V_j\setminus V(\mathcal{T})$, and both $\{u_1,\ldots,u_{k-1}\}$ and $\{u'_1,\ldots,u'_{k-1}\}$ have index vector $\mathbf{v}$. 

Take $w_1,\ldots,w_{k-3}\in V_i\setminus (V(\mathcal{T})\cup V(F^*))$ and $z\in N(w_1\ldots w_{k-3}u_{k-1}u'_{k-1})\setminus (V(\mathcal{T})\cup V(F^*))$.  
Now, we get  two copies of $T_k$
$$T=\{au_1\ldots u_{k-1},au_1\ldots u_{k-2}u'_{k-1},u_{k-1}u'_{k-1}w_1\ldots w_{k-3}z\},$$ 
$$T'=\{b u'_1 \ldots u'_{k-2}u_{k-1},  bu'_1\ldots u'_{k-1},u_{k-1}u'_{k-1}w_1\ldots w_{k-3}z\}.$$ 
Then, $V(T\cup T')\cap V(\mathcal{T})=\emptyset$ and $\mathbf{i}_{\mathcal{P}}(T)-\mathbf{i}_{\mathcal{P}}(T')=\mathbf{u}_i-\mathbf{u}_j$, which contradicts the maximality of $\mathcal{T}$.

By the pigeonhole principle, there is $p\in [r]$ such that there are at least $2\beta n$ pairs $(T,T')$ from $\mathcal{T}$ satisfying
$$\mathbf{s}:=\mathbf{i}_{\mathcal{P}}(T)=\sum_{l=1}^{r}x_l\mathbf{u}_l+(k-2)\mathbf{u}_i+\mathbf{u}_p+\mathbf{u}_q,$$  
$$\mathbf{t}:=\mathbf{i}_{\mathcal{P}}(T')=\sum_{l=1}^{r}x_l\mathbf{u}_l+(k-3)\mathbf{u}_i+\mathbf{u}_j+\mathbf{u}_p+\mathbf{u}_q.$$ Therefore, $\mathbf{s},\mathbf{t}\in I^{\beta}(\mathcal{P})$ and $\mathbf{s}-\mathbf{t}=\mathbf{u}_i-\mathbf{u}_j$. Consequently, Lemma~\ref{transferral} implies that there exists a constant $C=C(T_k,r)$ such that  $V_i \cup V_j$ is $\left(T_k, \tfrac{\beta}{2}n, (2Ck-C)t\right)$-closed. 
\end{proof}

\begin{proof}[\bf Proof of Lemma~\ref{kmorethan4}] 
By the assumption in Lemma~\ref{kmorethan4}, we choose $1/n\ll \beta\ll \gamma,\zeta\ll \delta,\alpha$.
Consider a $(k-1)$-vector $\mathbf{v}=(x_1,\cdots,x_r)$. Let $\mathcal{X}$ be the family of all $(k-1)$-sets $S$ with index vector $\mathbf{v}$ such that $|N(S)\cap V_i|\ge |N(S)|-\zeta n$ for some $i\in[r]$. We further assume $x_j\ge 1$ for some $j\in [r]$.

\begin{claim}\label{fact}
$|N(S')\cap V_j|\ge |N(S')|-\zeta n$ holds for all but at most $\gamma n^{k-1}$ many $(k-1)$-sets $S'$ with index vector
$\mathbf{v}'=\mathbf{v}+\mathbf{u}_i-\mathbf{u}_j$. 
\end{claim}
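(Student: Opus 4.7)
The plan is to apply the hypothesis of Lemma~\ref{kmorethan4} to the $(k-1)$-vector $\mathbf{v}':=\mathbf{v}+\mathbf{u}_i-\mathbf{u}_j$ (which is a valid index vector since $x_j\ge 1$) to obtain some $i^*\in[r]$ such that all but $\gamma n^{k-1}$ of the $(k-1)$-sets $S'$ with $\mathbf{i}_{\mathcal{P}}(S')=\mathbf{v}'$ satisfy $|N(S')\cap V_{i^*}|\ge |N(S')|-\zeta n$. The desired claim is exactly this conclusion with $V_j$ in place of $V_{i^*}$, so it suffices to prove $i^*=j$. Note that if $i=j$ then $\mathbf{v}'=\mathbf{v}$ and the statement is immediate from the hypothesis, so assume henceforth $i\ne j$.

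Suppose, for contradiction, that $i^*\ne j$. Set $\mathbf{w}:=\mathbf{v}+\mathbf{u}_i$ and let $E_{\mathbf{w}}$ denote the number of edges of $H$ with index $\mathbf{w}$. I double-count $E_{\mathbf{w}}$ in two ways. First, counting pairs $(e,S)$ where $e$ is an edge of index $\mathbf{w}$ and $S\subset e$ is a $(k-1)$-subset of index $\mathbf{v}$ (equivalently $e\setminus S\in V_i$), each edge contributes $x_i+1$ such pairs, so
\[
(x_i+1)E_{\mathbf{w}} \;=\; \sum_{\mathbf{i}_{\mathcal{P}}(S)=\mathbf{v}} |N(S)\cap V_i| \;\ge\; |\mathcal{X}|(\alpha n-\zeta n).
\]
Since $|V_l|\ge\delta n$ for every $l$ and $\gamma\ll\delta$, the total count $\prod_l\binom{|V_l|}{x_l}$ of $(k-1)$-sets of index $\mathbf{v}$ exceeds $\gamma n^{k-1}$ by a large factor, so $|\mathcal{X}|\ge c_1 n^{k-1}$ for some $c_1=c_1(k,\delta)>0$, and hence $E_{\mathbf{w}}\ge c_2\alpha n^k$ for some $c_2=c_2(k,\delta)>0$. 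Second, counting pairs $(e,S')$ with $S'\subset e$ of index $\mathbf{v}'$ (equivalently $e\setminus S'\in V_j$), each edge contributes exactly $x_j$ such pairs, yielding
\[
x_j E_{\mathbf{w}} \;=\; \sum_{\mathbf{i}_{\mathcal{P}}(S')=\mathbf{v}'}|N(S')\cap V_j|.
\]
Under $i^*\ne j$, the ``good'' $S'$ obey $|N(S')\cap V_j|\le |N(S')|-|N(S')\cap V_{i^*}|\le\zeta n$, while the at most $\gamma n^{k-1}$ ``bad'' $S'$ each contribute at most $n$; thus $E_{\mathbf{w}}\le (\zeta+\gamma)n^k/x_j$. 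These two bounds on $E_{\mathbf{w}}$ are incompatible because $\gamma,\zeta\ll\delta,\alpha$, contradicting the assumption $i^*\ne j$.

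The substantive step is setting up the correct double count of $E_{\mathbf{w}}$; the only delicate point is ensuring that $\mathcal{X}$ contains a positive fraction of all $(k-1)$-sets of index $\mathbf{v}$, which follows from $|V_l|\ge\delta n$ together with $\gamma\ll\delta$. Conceptually, the minimum codegree hypothesis $\delta(H)\ge\alpha n$ is used precisely to guarantee that each $S\in\mathcal{X}$ has many neighbors inside $V_i$, producing the lower bound on $E_{\mathbf{w}}$, while the supposed rival dominance $i^*\ne j$ forces the matching upper bound to collapse.
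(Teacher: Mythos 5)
Your proof is correct and is at bottom the same argument as the paper's: both double-count edges of index vector $\mathbf{v}+\mathbf{u}_i$, getting a lower bound from the $V_i$-dominance for $\mathbf{v}$ together with $\delta(H)\ge\alpha n$, and an upper bound that collapses unless the $\mathbf{v}'$-dominating part is $V_j$. The organization differs slightly: the paper introduces the intermediate family $\mathcal{Y}$ of $(k-1)$-sets $S'$ with $|N(S')\cap V_j|\ge 2\zeta n$, shows $\mathcal{Y}$ occupies a constant fraction of all $S'$ with index $\mathbf{v}'$, and then intersects it with the dominance-compliant family to force $l=j$; you instead assume $i^*\neq j$ outright and bound $x_j E_{\mathbf{w}}=\sum_{\mathbf{i}(S')=\mathbf{v}'}|N(S')\cap V_j|$ directly, which is a tad more streamlined and also makes the overcounting factors $x_i+1$ and $x_j$ explicit rather than absorbing them into constants. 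Either way the contradiction is the same inequality $\alpha\ll\zeta+\gamma$ being false. The one hygiene point you handled correctly and the paper leaves implicit is that $|\mathcal{X}|$ is a positive fraction of $n^{k-1}$ because every $|V_l|\ge\delta n$ and $\gamma\ll\delta$.
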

\begin{proof}
Indeed, there are at least 
\begin{equation*}
\begin{aligned}
|\mathcal{X}|\cdot (|N(S)|-\zeta n)&\ge
(\prod_{l\in [r]}\binom{|V_l|}{x_l}-\gamma n^{k-1})(|N(S)|-\zeta n)\\
&\ge (\prod_{l\in [r]}\binom{|V_l|}{x_l}-\gamma n^{k-1})(\alpha n-\zeta n)\\
&\ge \frac{1}{4k^k} \prod_{l\in [r]}|V_l|^{x_l}\alpha n
\end{aligned}
\end{equation*}
edges with index vector $\mathbf{v}+\mathbf{u}_i$ of $H$. 
Let $\mathcal{Y}$ be the family of all $(k-1)$-sets $S'$ with index vector $\mathbf{v}'$ and $|N(S')\cap V_j|\ge 2\zeta n$.
Then there are at most
\[
|\mathcal{Y}||V_j|+\binom{|V_i|}{x_i+1}\binom{|V_j|}{x_j-1}\prod_{l\in [r]\setminus\{i,j\}}\binom{|V_l|}{x_l}\cdot 2\zeta n.
\]
edges with index vector $\mathbf{v}+\mathbf{u}_i$.
Hence,
\[
|\mathcal{Y}||V_j|+\binom{|V_i|}{x_i+1}\binom{|V_j|}{x_j-1}\prod_{l\in [r]\setminus\{i,j\}}\binom{|V_l|}{x_l}\cdot 2\zeta n 
\ge \frac{1}{4k^k} \prod_{l\in [r]}|V_l|^{x_l}\alpha n.
\]
Thus,
\begin{equation*}
\begin{aligned}
|\mathcal{Y}|
&\ge \frac{1}{|V_j|}\left(\frac{1}{4k^k} \prod_{l\in [r]}|V_l|^{x_l}\alpha n-\binom{|V_i|}{x_i+1}\binom{|V_j|}{x_j-1}\prod_{l\in [r]\setminus\{i,j\}}\binom{|V_l|}{x_l}\cdot 2\zeta n\right)\\
&\ge \frac{1}{4k^k} |V_j|^{x_j-1}\prod_{l\in [r]\setminus\{j\}}|V_l|^{x_l}\alpha n-\binom{|V_i|}{x_i+1}\binom{|V_j|}{x_j-1}\prod_{l\in [r]\setminus\{i,j\}}\binom{|V_l|}{x_l}\cdot 2\zeta \frac{n}{|V_j|}\\
&\ge \frac{\alpha}{10k^k} \binom{|V_i|}{x_i+1}\binom{|V_j|}{x_j-1}\prod_{l\in[r]\setminus \{i,j\}}\binom{|V_l|}{x_l},
\end{aligned}
\end{equation*}
since $\frac{n}{|V_j|}\le \frac{1}\delta$ and $\zeta\ll\delta,\alpha, 1/k$.
By assumption in Lemma \ref{kmorethan4}, there are at least 
\[
\binom{|V_i|}{x_i+1}\binom{|V_j|}{x_j-1}\prod_{l\in [r]\setminus\{i,j\}}\binom{|V_l|}{x_l}-\gamma n^{k-1}
\]
$(k-1)$-sets $S'$ with index vector $\textbf{v}'$ and $|N(S')\cap V_l|\ge |N(S')|-\zeta n$ for some $l\in [r]$.
As $\gamma\ll \frac{\alpha}{10k^k}$, 
we have $l=j$. Hence, $|N(S')\cap V_j|\ge |N(S')|-\zeta n$ holds for all but at most $\gamma n^{k-1}$ $(k-1)$-sets $S'$ with index vector
$\mathbf{v}'$. 
\end{proof}
\noindent Next, we proceed with three cases based on the value of $k$.\medskip

{\bf Case 1.} $k\ge 5$. Choose a $(k-1)$-vector $\mathbf{v}=(x_1,x_2,x_3,\ldots,x_r)$ such that $x_1,x_2\ge 2$. Assume that for all but at most $\gamma n^{k-1}$ such $(k-1)$-sets $S$ with $\mathbf{i}_{\mathcal{P}}(S)=\mathbf{v}$, we have $|N(S)\cap V_i|\ge |N(S)|-\zeta n$.
Let $\mathbf{a}=\mathbf{v}+\mathbf{u}_i-\mathbf{u}_1$ and $\mathbf{b}=\mathbf{v}+\mathbf{u}_i-\mathbf{u}_2$.
Claim~\ref{fact} guarantees that for all but at most $\gamma n^{k-1}$ many $(k-1)$-sets $A$ with index vector $\mathbf{a}$, at least $|N(A)| - \zeta n$ neighbors lie in $V_1$; we denote this family of $(k-1)$-sets by $\mathcal{A}$. Similarly, for all but at most $\gamma n^{k-1}$ many $(k-1)$-sets $B$ with index vector $\mathbf{b}$, at least $|N(B)| - \zeta n$ neighbors lie in $V_2$, and we denote this family by $\mathcal{B}$.
Let $\mathcal{T}$ be a maximal family of pairwise vertex-disjoint pairs $\{T, T'\}$ of copies of $T_k$  such that $\mathbf{i}_{\mathcal{P}}(T)-\mathbf{i}_{\mathcal{P}}(T')=\mathbf{u}_1-\mathbf{u}_2$ with
$\mathbf{i}_{\mathcal{P}}(T)\in\{\mathbf{v}+(k-4)\mathbf{u}_1+2\mathbf{u}_2+\mathbf{u}_i+\mathbf{u}_p: p\in [r]\}$.

We claim that $|\mathcal{T}|\ge 2r\beta n$.
Suppose to the contrary that $|\mathcal{T}| < 2r\beta n$, which implies $|V(\mathcal{T})| < (8k-2)r\beta n$.

There are at most $|V(\mathcal{T})|n^{k-2}<(8k-2)r\beta n^{k-1}$ many $(k-1)$-sets intersecting $V(\mathcal{T})$.
 Consequently, there are at least
\begin{equation*}
\begin{aligned}
&\prod_{\substack{l\in[r]\setminus \{q,i\}}}\binom{|V_l|}{x_l}\binom{|V_q|}{x_q-1}\binom{|V_i|}{x_i+1} - (8k-2)r\beta n^{k-1} \\
&\ge \prod_{\substack{l\in[r]\setminus \{q,i\}}}\binom{\delta n}{x_l}\binom{\delta n}{x_q-1}\binom{\delta n}{x_i+1} - (8k-2)r\beta n^{k-1}  \\
&\ge \gamma n^{k-1}
\end{aligned}
\end{equation*}
$(k-1)$-sets from $\mathcal{A}\setminus V(\mathcal{T})$ when $q=1$ (respectively $\mathcal{B}\setminus V(\mathcal{T})$ when $q=2$), where the inequality follows from the choice $\beta \ll \gamma \ll \delta$.

Choose two $(k-1)$-sets $A\in \mathcal{A}\setminus V(\mathcal{T})$, $B\in \mathcal{B}\setminus V(\mathcal{T})$. Then by definition we can choose $a,b\in N(A)\cap (V_1\setminus V(\mathcal{T}))$, $c,d\in N(B)\cap (V_2\setminus V(\mathcal{T}))$, $v_5,\ldots,v_{k-1}\in V_1\setminus V(\mathcal{T})$.  Let $z\in N(abcdv_5\ldots v_{k-1})\setminus V(\mathcal{T})$.
Now, we get two copies of $T_k$
$$T=\{\{a\}\cup A,\{b\}\cup A,abcdv_5\ldots v_{k-1}z\},$$ 
$$T'=\{\{c\}\cup B,\{d\}\cup B, abcdv_5\ldots v_{k-1}z\}.$$ Then, 
$V(T\cup T')\cap V(\mathcal{T})=\emptyset$ and $\mathbf{i}_{\mathcal{P}}(T)-\mathbf{i}_{\mathcal{P}}(T')=\mathbf{u}_1-\mathbf{u}_2$, which contradicts the maximality of $\mathcal{T}$. Thus $|\mathcal{T}| \geq 2r\beta n$.

By the pigeonhole principle, there exists $p \in [r]$ for which the index vectors
\[
\mathbf{s} := \mathbf{v} + (k-4)\mathbf{u}_1 + 2\mathbf{u}_2 + \mathbf{u}_i + \mathbf{u}_p,
\]
\[
\mathbf{t} := \mathbf{v} + (k-3)\mathbf{u}_1 + \mathbf{u}_2 + \mathbf{u}_i + \mathbf{u}_p
\]
both belong to $I^{\beta}(\mathcal{P})$. Lemma~\ref{transferral} then yields that $V_1 \cup V_2$ is $\left(T_k, \tfrac{\beta}{2}n, (2Ck-C)t\right)$-closed.\medskip

{\bf Case 2.} $k=4$. Let $\mathbf{c}=\mathbf{u}_1+2\mathbf{u}_2$ and $\mathbf{d}=2\mathbf{u}_1+\mathbf{u}_2$. 
By the assumption in Lemma~\ref{kmorethan4},
there exist $i,j\in [r]$ such that for all but at most $\gamma n^3$ triples $C$ with index vector $\mathbf{c}$, we have $|N(C) \cap V_i| \geq |N(C)| - \zeta n$ (denoting this subcollection by $\mathcal{C}$), and similarly for all but at most $\gamma n^3$ triples $D$ with index vector $\mathbf{d}$, we have $|N(D) \cap V_j| \geq |N(D)| - \zeta n$ (denoted by $\mathcal{D}$).

By Claim \ref{fact}, we can obtain that 
if $i=1$, then $j=2$, vice versa. 
Let $\mathbf{s}=\mathbf{c}+\mathbf{u}_i+\mathbf{u}_j+\mathbf{u}_1+\mathbf{u}_2$ and $\mathbf{t}=\mathbf{d}+\mathbf{u}_i+\mathbf{u}_j+\mathbf{u}_1+\mathbf{u}_2$. 
Let $\mathcal{T}$ be a maximal family of pairwise vertex-disjoint pairs $\{T, T'\}$ of copies of $T_4$  such that $\mathbf{i}_{\mathcal{P}}(T)=\mathbf{s}$ and $\mathbf{i}_{\mathcal{P}}(T')=\mathbf{t}$.

We claim that $|\mathcal{T}|\ge 2\beta n$.
If $|\mathcal{T}|< 2\beta n$, then $|V(\mathcal{T})|< 28\beta n$.  
By Claim~\ref{fact}, we may take $v_1\in V_i\setminus V(\mathcal{T}), v_2,v_3\in V_2\setminus V(\mathcal{T})$ such that $|N(v_1v_2v_3)\cap (V_1\setminus V(\mathcal{T}))|>(\alpha-28\beta-\zeta)n$. 
We can pick $u_1,u_2\in V_1\cap (N(v_1v_2v_3)\setminus V(\mathcal{T}))$ and $w\in V_2\setminus V(\mathcal{T})$ such that $\{u_1,u_2,w\}\in \mathcal{D}\setminus V(\mathcal{T})$, which follows from 
\[
\binom{(\alpha-28\beta-\zeta)n}{2}\binom{\delta n-28\beta n}{1}>\gamma n^3.
\]
Take $z\in N(u_1u_2w)\cap (V_j\setminus V(\mathcal{T}))$.
Let 
$T=\{u_1v_1v_2v_3,u_2v_1v_2v_3,u_1u_2wz\}$.
Then $\mathbf{i}_{\mathcal{P}}(T)=\mathbf{s}$.
The illustration of 
$T$ in Figure \ref{fig:T_4(1)} uses monochromatic lines connecting four vertices to denote a hyperedge.

\begin{figure}[H]
    \centering

\tikzset{every picture/.style={line width=0.75pt}}       

\begin{tikzpicture}[x=0.75pt,y=0.75pt,yscale=-1,xscale=1]

\draw   (100,40) -- (170,40) -- (170,160) -- (100,160) -- cycle ;

\draw   (191,40) -- (261,40) -- (261,160) -- (191,160) -- cycle ;

\draw   (294,40) -- (364,40) -- (364,160) -- (294,160) -- cycle ;

\draw   (387,40) -- (457,40) -- (457,160) -- (387,160) -- cycle ;

\draw  [color={rgb, 255:red, 0; green, 0; blue, 0 }  ,draw opacity=1 ][fill={rgb, 255:red, 0; green, 0; blue, 0 }  ,fill opacity=1 ] (129.59,65.3) .. controls (129.55,62.93) and (131.43,60.96) .. (133.81,60.92) .. controls (136.19,60.87) and (138.15,62.76) .. (138.2,65.13) .. controls (138.24,67.51) and (136.36,69.47) .. (133.98,69.52) .. controls (131.6,69.57) and (129.64,67.68) .. (129.59,65.3) -- cycle ;

\draw  [color={rgb, 255:red, 0; green, 0; blue, 0 }  ,draw opacity=1 ][fill={rgb, 255:red, 0; green, 0; blue, 0 }  ,fill opacity=1 ] (230.09,85.26) .. controls (230.05,82.88) and (231.94,80.92) .. (234.31,80.87) .. controls (236.69,80.82) and (238.65,82.71) .. (238.7,85.09) .. controls (238.75,87.46) and (236.86,89.43) .. (234.48,89.47) .. controls (232.11,89.52) and (230.14,87.63) .. (230.09,85.26) -- cycle ;

\draw  [color={rgb, 255:red, 0; green, 0; blue, 0 }  ,draw opacity=1 ][fill={rgb, 255:red, 0; green, 0; blue, 0 }  ,fill opacity=1 ] (130.61,114.28) .. controls (130.57,111.91) and (132.45,109.94) .. (134.83,109.9) .. controls (137.2,109.85) and (139.17,111.74) .. (139.22,114.11) .. controls (139.26,116.49) and (137.38,118.45) .. (135,118.5) .. controls (132.62,118.55) and (130.66,116.66) .. (130.61,114.28) -- cycle ;
 
\draw  [color={rgb, 255:red, 0; green, 0; blue, 0 }  ,draw opacity=1 ][fill={rgb, 255:red, 0; green, 0; blue, 0 }  ,fill opacity=1 ] (217.59,64.3) .. controls (217.55,61.93) and (219.43,59.96) .. (221.81,59.92) .. controls (224.19,59.87) and (226.15,61.76) .. (226.2,64.13) .. controls (226.24,66.51) and (224.36,68.47) .. (221.98,68.52) .. controls (219.6,68.57) and (217.64,66.68) .. (217.59,64.3) -- cycle ;

\draw  [color={rgb, 255:red, 0; green, 0; blue, 0 }  ,draw opacity=1 ][fill={rgb, 255:red, 0; green, 0; blue, 0 }  ,fill opacity=1 ] (215.61,123.28) .. controls (215.57,120.91) and (217.45,118.94) .. (219.83,118.9) .. controls (222.2,118.85) and (224.17,120.74) .. (224.22,123.11) .. controls (224.26,125.49) and (222.38,127.45) .. (220,127.5) .. controls (217.62,127.55) and (215.66,125.66) .. (215.61,123.28) -- cycle ;

\draw  [color={rgb, 255:red, 0; green, 0; blue, 0 }  ,draw opacity=1 ][fill={rgb, 255:red, 0; green, 0; blue, 0 }  ,fill opacity=1 ] (319.59,79.3) .. controls (319.55,76.93) and (321.43,74.96) .. (323.81,74.92) .. controls (326.19,74.87) and (328.15,76.76) .. (328.2,79.13) .. controls (328.24,81.51) and (326.36,83.47) .. (323.98,83.52) .. controls (321.6,83.57) and (319.64,81.68) .. (319.59,79.3) -- cycle ;

\draw  [color={rgb, 255:red, 0; green, 0; blue, 0 }  ,draw opacity=1 ][fill={rgb, 255:red, 0; green, 0; blue, 0 }  ,fill opacity=1 ] (412.59,79.3) .. controls (412.55,76.93) and (414.43,74.96) .. (416.81,74.92) .. controls (419.19,74.87) and (421.15,76.76) .. (421.2,79.13) .. controls (421.24,81.51) and (419.36,83.47) .. (416.98,83.52) .. controls (414.6,83.57) and (412.64,81.68) .. (412.59,79.3) -- cycle ;

\draw [color={rgb, 255:red, 208; green, 2; blue, 27 }  ,draw opacity=1 ][line width=1.5]    (133.91,65.2) -- (221.91,64.2) ;

\draw [color={rgb, 255:red, 208; green, 2; blue, 27 }  ,draw opacity=1 ][line width=1.5]    (221.91,64.2) -- (234.4,85.17) ;

\draw [color={rgb, 255:red, 208; green, 2; blue, 27 }  ,draw opacity=1 ][line width=1.5]    (234.4,85.17) -- (323.91,79.2) ;

\draw [color={rgb, 255:red, 126; green, 211; blue, 33 }  ,draw opacity=1 ][line width=1.5]    (134.91,114.2) -- (221.89,64.22) ;

\draw [color={rgb, 255:red, 126; green, 211; blue, 33 }  ,draw opacity=1 ][line width=1.5]    (221.89,64.22) .. controls (239,59.92) and (242.5,83.92) .. (234.4,85.17) ;

\draw [color={rgb, 255:red, 126; green, 211; blue, 33 }  ,draw opacity=1 ][line width=1.5]    (234.4,85.17) .. controls (275,104.92) and (308.33,92) .. (323.89,79.22) ;
 
\draw [color={rgb, 255:red, 74; green, 144; blue, 226 }  ,draw opacity=1 ][line width=1.5]    (133.91,65.2) -- (134.91,114.2) ;

\draw [color={rgb, 255:red, 74; green, 144; blue, 226 }  ,draw opacity=1 ][line width=1.5]    (134.91,114.2) -- (219.91,123.2) ;

\draw [color={rgb, 255:red, 74; green, 144; blue, 226 }  ,draw opacity=1 ][line width=1.5]    (219.91,123.2) -- (416.91,79.2) ;

\draw (100,171.4) node [anchor=north west][inner sep=0.75pt]    {$V{\textstyle _{1}}\setminus V(\mathcal{T})$};

\draw (189,171.4) node [anchor=north west][inner sep=0.75pt]    {$V{\textstyle _{2}}\setminus V(\mathcal{T})$};

\draw (296,171.4) node [anchor=north west][inner sep=0.75pt]    {$V{\textstyle _{i}}\setminus V(\mathcal{T})$};

\draw (388,171.4) node [anchor=north west][inner sep=0.75pt]    {$V{\textstyle _{j}}\setminus V(\mathcal{T})$};

\draw (110,55.4) node [anchor=north west][inner sep=0.75pt]    {${\textstyle u_{1}}$};

\draw (109,106.4) node [anchor=north west][inner sep=0.75pt]    {${\textstyle u_{2}}$};

\draw (230,51.4) node [anchor=north west][inner sep=0.75pt]    {${\textstyle v_{2}}$};

\draw (210.5,76.9) node [anchor=north west][inner sep=0.75pt]    {${\textstyle v_{3}}$};

\draw (200,122.4) node [anchor=north west][inner sep=0.75pt]    {$w$};

\draw (330,67.4) node [anchor=north west][inner sep=0.75pt]    {${\textstyle v_{1}}$};

\draw (425.2,68.53) node [anchor=north west][inner sep=0.75pt]    {$z$};

\end{tikzpicture}

    \caption{$T=\{u_1v_1v_2v_3,u_2v_1v_2v_3,u_1u_2wz\}$}
    \label{fig:T_4(1)}
\end{figure}

By Claim~\ref{fact}, we may take $v'_1\in V_j\setminus V(\mathcal{T}), v'_2,v'_3\in V_1\setminus V(\mathcal{T})$ such that $|N(v'_1v'_2v'_3)\cap (V_2\setminus V(\mathcal{T}))|>(\alpha-28\beta-\zeta)n$. 
We can pick  $u'_1,u'_2\in V_2\cap (N(v'_1v'_2v'_3)\setminus V(\mathcal{T}))$ and $w'\in V_1\setminus V(\mathcal{T})$, 
such that $\{u'_1,u'_2,w'\}\in \mathcal{C}\setminus V(\mathcal{T})$, which follows from
\[
\binom{(\alpha-28\beta-\zeta)n}{2}\binom{\delta n-28\beta n}{1}>\gamma n^3.
\]
Take $z'\in N(u'_1u'_2w')\cap (V_i\setminus V(\mathcal{T}))$.
Let 
$T'=\{u'_1v'_1v'_2v'_3,u'_2v'_1v'_2v'_3,u'_1u'_2w'z'\}$, see Figure \ref{fig:T_4(2)}.
Then $\mathbf{i}_{\mathcal{P}}(T')=\mathbf{t}$.
This contradicts the maximality of $\mathcal{T}$.

\begin{figure}[H]
    \centering

\tikzset{every picture/.style={line width=0.75pt}}        

\begin{tikzpicture}[x=0.75pt,y=0.75pt,yscale=-1,xscale=1]

\draw  [color={rgb, 255:red, 0; green, 0; blue, 0 }  ,draw opacity=1 ][fill={rgb, 255:red, 0; green, 0; blue, 0 }  ,fill opacity=1 ] (142.61,76.28) .. controls (142.57,73.91) and (144.45,71.94) .. (146.83,71.9) .. controls (149.2,71.85) and (151.17,73.74) .. (151.22,76.11) .. controls (151.26,78.49) and (149.38,80.45) .. (147,80.5) .. controls (144.62,80.55) and (142.66,78.66) .. (142.61,76.28) -- cycle ;

\draw   (100,40) -- (170,40) -- (170,160) -- (100,160) -- cycle ;

\draw   (191,40) -- (261,40) -- (261,160) -- (191,160) -- cycle ;

\draw   (294,40) -- (364,40) -- (364,160) -- (294,160) -- cycle ;

\draw   (387,40) -- (457,40) -- (457,160) -- (387,160) -- cycle ;
 
\draw  [color={rgb, 255:red, 0; green, 0; blue, 0 }  ,draw opacity=1 ][fill={rgb, 255:red, 0; green, 0; blue, 0 }  ,fill opacity=1 ] (117.58,75.61) .. controls (117.54,73.62) and (115.89,72.03) .. (113.9,72.07) .. controls (111.91,72.11) and (110.33,73.76) .. (110.37,75.75) .. controls (110.41,77.74) and (112.05,79.32) .. (114.05,79.29) .. controls (116.04,79.25) and (117.62,77.6) .. (117.58,75.61) -- cycle ;

\draw  [color={rgb, 255:red, 0; green, 0; blue, 0 }  ,draw opacity=1 ][fill={rgb, 255:red, 0; green, 0; blue, 0 }  ,fill opacity=1 ] (217.4,86.67) .. controls (217.35,84.3) and (219.24,82.33) .. (221.61,82.28) .. controls (223.99,82.24) and (225.95,84.12) .. (226,86.5) .. controls (226.05,88.88) and (224.16,90.84) .. (221.78,90.89) .. controls (219.41,90.93) and (217.44,89.05) .. (217.4,86.67) -- cycle ;
 
\draw  [color={rgb, 255:red, 0; green, 0; blue, 0 }  ,draw opacity=1 ][fill={rgb, 255:red, 0; green, 0; blue, 0 }  ,fill opacity=1 ] (217.59,64.3) .. controls (217.55,61.93) and (219.43,59.96) .. (221.81,59.92) .. controls (224.19,59.87) and (226.15,61.76) .. (226.2,64.13) .. controls (226.24,66.51) and (224.36,68.47) .. (221.98,68.52) .. controls (219.6,68.57) and (217.64,66.68) .. (217.59,64.3) -- cycle ;

\draw  [color={rgb, 255:red, 0; green, 0; blue, 0 }  ,draw opacity=1 ][fill={rgb, 255:red, 0; green, 0; blue, 0 }  ,fill opacity=1 ] (130.61,143.28) .. controls (130.57,140.91) and (132.45,138.94) .. (134.83,138.9) .. controls (137.2,138.85) and (139.17,140.74) .. (139.22,143.11) .. controls (139.26,145.49) and (137.38,147.45) .. (135,147.5) .. controls (132.62,147.55) and (130.66,145.66) .. (130.61,143.28) -- cycle ;
 
\draw  [color={rgb, 255:red, 0; green, 0; blue, 0 }  ,draw opacity=1 ][fill={rgb, 255:red, 0; green, 0; blue, 0 }  ,fill opacity=1 ] (316.4,63.67) .. controls (316.35,61.3) and (318.24,59.33) .. (320.61,59.28) .. controls (322.99,59.24) and (324.95,61.12) .. (325,63.5) .. controls (325.05,65.88) and (323.16,67.84) .. (320.78,67.89) .. controls (318.41,67.93) and (316.44,66.05) .. (316.4,63.67) -- cycle ;
 
\draw  [color={rgb, 255:red, 0; green, 0; blue, 0 }  ,draw opacity=1 ][fill={rgb, 255:red, 0; green, 0; blue, 0 }  ,fill opacity=1 ] (417.61,95.28) .. controls (417.57,92.91) and (419.45,90.94) .. (421.83,90.9) .. controls (424.2,90.85) and (426.17,92.74) .. (426.22,95.11) .. controls (426.26,97.49) and (424.38,99.45) .. (422,99.5) .. controls (419.62,99.55) and (417.66,97.66) .. (417.61,95.28) -- cycle ;

\draw [color={rgb, 255:red, 74; green, 144; blue, 226 }  ,draw opacity=1 ][line width=1.5]    (221.98,62.52) -- (221.78,84.89) ;

\draw [color={rgb, 255:red, 74; green, 144; blue, 226 }  ,draw opacity=1 ][line width=1.5]    (221.89,64.22) -- (320.7,63.59) ;

\draw [color={rgb, 255:red, 208; green, 2; blue, 27 }  ,draw opacity=1 ][line width=1.5]    (146.91,76.2) -- (221.89,64.22) ;

\draw [color={rgb, 255:red, 126; green, 211; blue, 33 }  ,draw opacity=1 ][line width=1.5]    (146.91,76.2) .. controls (164.33,87) and (207.33,92) .. (221.78,86.89) ;

\draw [color={rgb, 255:red, 126; green, 211; blue, 33 }  ,draw opacity=1 ][line width=1.5]    (221.7,86.59) .. controls (260.33,102) and (406.05,108.07) .. (421.61,95.28) ;

\draw [color={rgb, 255:red, 208; green, 2; blue, 27 }  ,draw opacity=1 ][line width=1.5]    (146.91,76.2) -- (113.97,75.68) ;
 
\draw [color={rgb, 255:red, 74; green, 144; blue, 226 }  ,draw opacity=1 ][line width=1.5]    (134.91,143.2) -- (221.7,86.59) ;
 
\draw [color={rgb, 255:red, 208; green, 2; blue, 27 }  ,draw opacity=1 ][line width=1.5]    (221.89,64.22) -- (422,95.5) ;

\draw [color={rgb, 255:red, 126; green, 211; blue, 33 }  ,draw opacity=1 ][line width=1.5]    (113.97,75.68) .. controls (125.33,83) and (132.33,84) .. (146.91,76.2) ;

\draw (101,168.4) node [anchor=north west][inner sep=0.75pt]    {$V{\textstyle _{1} \setminus V(\mathcal{T})}$};

\draw (193,168.4) node [anchor=north west][inner sep=0.75pt]    {$V{\textstyle _{2}\setminus V(\mathcal{T})}$};

\draw (296,168.4) node [anchor=north west][inner sep=0.75pt]    {$V{\textstyle _{i}\setminus V(\mathcal{T})}$};

\draw (389,168.4) node [anchor=north west][inner sep=0.75pt]    {$V{\textstyle _{j}\setminus V(\mathcal{T})}$};

\draw (105,53.4) node [anchor=north west][inner sep=0.75pt]    {${\textstyle v'_{2}}$};

\draw (141.33,55.4) node [anchor=north west][inner sep=0.75pt]    {${\textstyle v'_{3}}$};

\draw (202,43.4) node [anchor=north west][inner sep=0.75pt]    {${\textstyle u'_{1}}$};

\draw (210,91.4) node [anchor=north west][inner sep=0.75pt]    {${\textstyle u'_{2}}$};

\draw (113.91,131.6) node [anchor=north west][inner sep=0.75pt]    {$w'$};

\draw (426.81,95.32) node [anchor=north west][inner sep=0.75pt]    {${\textstyle v'_{1}}$};

\draw (329,54.4) node [anchor=north west][inner sep=0.75pt]    {$z'$};

\end{tikzpicture}

    \caption{$T'=\{u'_1v'_1v'_2v'_3,u'_2v'_1v'_2v'_3,u'_1u'_2w'z'\}$}
    \label{fig:T_4(2)}
\end{figure}

Hence, $\mathbf{s},\mathbf{t}\in I^{\beta}(\mathcal{P})$. By Lemma \ref{transferral}, $V_1\cup V_2$ is $\left(T_4,\tfrac{\beta}{2}n, 7Ct\right)$-closed.\medskip

{\bf Case 3.} $k=3$. We
distinguish two subcases.

{\bf Subcase 3.1.} There exists a $2$-vector $\textbf{u}_i+\textbf{u}_j$ with $i\neq j$ such that for all but at most $\gamma n^2$ pairs $S$ with index vector $\textbf{u}=\textbf{u}_i+\textbf{u}_j$, we have $|N(S)\cap V_l|\ge |N(S)|-\zeta n$  with $l\in[r]\setminus \{i,j\}$.
Without loss of generality (relabelling if necessary), we assume $i=1$ and $j=2$.
By Claim \ref{fact} and the assumption in Lemma \ref{kmorethan4}, 
we have $|N(S_1)\cap V_{i_1}|\ge |N(S_1)|-\zeta n$ 
holds for all but at most $\gamma n^2$ many pairs $S_1$ with 
$\mathbf{i}_{\mathcal{P}}(S_1)=2\mathbf{u}_1$ and
$i_1\in [r]\setminus \{2,l\}$. 
Let $\mathcal{T}$ be a maximal family of pairwise vertex-disjoint triples $\{T,T',T''\}$ of copies of $T_3$ such that $2\mathbf{i}_{\mathcal{P}}(T)-\mathbf{i}_{\mathcal{P}}(T')-\mathbf{i}_{\mathcal{P}}(T'')=\textbf{u}_2-\textbf{u}_{i_1}$.

We claim that $|\mathcal{T}|\ge 2\beta n$. If $|\mathcal{T}|< 2\beta n$, then $|V(\mathcal{T})|<30\beta n$.
Choose $v_1\in V_1\setminus V(\mathcal{T}),v_2 \in V_2\setminus V(\mathcal{T})$ with $|N(v_1v_2)\cap (V_l\setminus V(\mathcal{T}))|>(\alpha-30\beta-\zeta)n$. By Claim \ref{fact} and the assumption in Lemma \ref{kmorethan4}, we have $u_1,u_2\in N(v_1v_2)\cap (V_l\setminus V(\mathcal{T}))$ such that $|N(u_1u_2)\cap (V_{i_2}\setminus V(\mathcal{T}))|\ge |N(u_1u_2)|-\zeta n-|V(\mathcal{T})|$ with $i_2\in[r]\setminus\{1,2,i_1\}$, which follows from $(\alpha-30\beta-\zeta)^2n^2>\gamma n^2$. Take $w\in N(u_1u_2)\cap (V_{i_2}\setminus V(\mathcal{T}))$.
Let $T=\{v_1v_2u_1,v_1v_2u_2,u_1u_2w\}$. Then $\textbf{i}_{\mathcal{P}}(T)=\textbf{u}_1+\textbf{u}_2+2\textbf{u}_l+\textbf{u}_{i_2}$.

By Claim \ref{fact}, choose $v'_1\in V_2\setminus V(\mathcal{T}), v'_2\in V_l\setminus V(\mathcal{T})$ with $|N(v'_1v'_2)\cap (V_1\setminus V(\mathcal{T}))|>(\alpha-30\beta-\zeta)n$.
We may pick $u'_1,u'_2\in N(v'_1v'_2)\cap (V_1\setminus V(\mathcal{T}))$ such that $|N(u'_1u'_2)\cap (V_{i_1}\setminus V(\mathcal{T}))|\ge |N(u'_1u'_2)|-\zeta n-|V(\mathcal{T})|$, which follows from $(\alpha-30\beta-\zeta)^2n^2>\gamma n^2$.
Take $w'\in N(u'_1u'_2)\cap (V_{i_1}\setminus V(\mathcal{T}))$.
Let $T'=\{v'_1v'_2u'_1,v'_1v'_2u'_2,u'_1u'_2w'\}$. Then $\textbf{i}_{\mathcal{P}}(T')=2\textbf{u}_1+\textbf{u}_2+\textbf{u}_l+\textbf{u}_{i_1}$.

Similarly, by Claim \ref{fact}, we can take $v''_1\in V_l\setminus V(\mathcal{T}),v''_2\in V_{i_2}\setminus V(\mathcal{T})$ with $|N(v''_1v''_2)\cap (V_l\setminus V(\mathcal{T}))|>(\alpha-30\beta-\zeta)n$.
Pick $u''_1,u''_2\in N(v''_1v''_2)\cap (V_l\setminus V(\mathcal{T}))$ such that $|N(u''_1u''_2)\cap (V_{i_2}\setminus V(\mathcal{T}))|\ge |N(u''_1u''_2)|-\zeta n-|V(\mathcal{T})|$, which follows from $(\alpha-30\beta-\zeta)^2n^2>\gamma n^2$.
Take $w''\in N(u''_1u''_2)\cap (V_{i_2}\setminus V(\mathcal{T}))$.
Let $T''=\{v''_1v''_2u''_1,v''_1v''_2u''_2,u''_1u''_2w''\}$. Then $\textbf{i}_{\mathcal{P}}(T'')=3\textbf{u}_l+2\textbf{u}_{i_2}$.

Hence, $2\mathbf{i}_{\mathcal{P}}(T)-\mathbf{i}_{\mathcal{P}}(T')-\mathbf{i}_{\mathcal{P}}(T'')=\textbf{u}_2-\textbf{u}_{i_1}$, contradicts the maximality of $\mathcal{T}$. Therefore, $\textbf{u}_2-\textbf{u}_{i_1}\in L^\beta (\mathcal{P})$. By Lemma \ref{transferral}, $V_2\cup V_{i_1}$ is $\left(T_3,\tfrac{\beta}{2}n, 5Ct\right)$-closed.

{\bf Subcase 3.2.} For all $2$-vector $\textbf{u}_i+\textbf{u}_j$ with $i\neq j$, we have for all but at most $\gamma n^2$ pairs $S$ with index vector $\textbf{u}_i+\textbf{u}_j$ and $i\neq j$, we have $|N(S)\cap V_i|\ge |N(S)|-\zeta n$ or $|N(S)\cap V_j|\ge |N(S)|-\zeta n$. 
Without loss of generality, we assume that for all but at most $\gamma n^2$ pairs $S$ with index vector $\textbf{u}_1+\textbf{u}_2$, we have $|N(S)\cap V_1|\ge |N(S)|-\zeta n$. 
Let $\mathcal{T}$ be a maximal family of pairwise vertex-disjoint triples $\{T,T'\}$ of copies of $T_3$ such that $\mathbf{i}_{\mathcal{P}}(T)-\mathbf{i}_{\mathcal{P}}(T')=\textbf{u}_1-\textbf{u}_2$.

We claim that $|\mathcal{T}|\ge 2\beta n$. Suppose $|\mathcal{T}|< 2\beta n$. Then $|V(\mathcal{T})|<20\beta n$.

If $r=2$, choose $v_1\in V_1\setminus V(\mathcal{T}),v_2 \in V_2\setminus V(\mathcal{T})$ with $|N(v_1v_2)\cap (V_1\setminus V(\mathcal{T}))|>(\alpha-20\beta-\zeta)n$. By Claim \ref{fact}, we can pick $u_1,u_2\in N(v_1v_2)\cap (V_1\setminus V(\mathcal{T}))$ such that $|N(u_1u_2)\cap (V_2\setminus V(\mathcal{T}))|\ge |N(u_1u_2)|-\zeta n-|V(\mathcal{T})|$, which follows from $(\alpha-20\beta-\zeta)^2n^2>\gamma n^2$.
Take $w\in N(u_1u_2)\cap (V_2\setminus V(\mathcal{T}))$.
Let $T=\{v_1v_2u_1,v_1v_2u_2,u_1u_2w\}$. Then $\textbf{i}_{\mathcal{P}}(T)=3\textbf{u}_1+2\textbf{u}_2$.
Choose $v'_1,v'_2\in V_1\setminus V(\mathcal{T})$ with $|N(v'_1v'_2)\cap (V_2\setminus V(\mathcal{T}))|>(\alpha-20\beta-\zeta)n$. By the assumption in Lemma \ref{kmorethan4}, we can take $u'_1,u'_2\in N(v'_1v'_2)\cap (V_2\setminus V(\mathcal{T}))$ such that $|N(u'_1u'_2)\cap (V_2\setminus V(\mathcal{T}))|\ge |N(u'_1u'_2)|-\zeta n-|V(\mathcal{T})|$, which follows from $(\alpha-20\beta-\zeta)^2n^2>\gamma n^2$.
Take $w'\in N(u'_1u'_2)\cap (V_2\setminus V(\mathcal{T}))$.
Let $T'=\{v'_1v'_2u'_1,v'_1v'_2u'_2,u'_1u'_2w'\}$. Then $\textbf{i}_{\mathcal{P}}(T')=2\textbf{u}_1+3\textbf{u}_2$.

If $r\ge 3$, then for all but at most $\gamma n^2$ pairs $S$ with index vector $\textbf{u}_1+\textbf{u}_3$, we have $|N(S)\cap V_3|\ge |N(S)|-\zeta n$, which follows from the assumption in Lemma \ref{kmorethan4}. 
Choose $v_1,v_2\in V_3\setminus V(\mathcal{T})$ with $|N(v_1v_2)\cap (V_1\setminus V(\mathcal{T}))|>(\alpha-20\beta-\zeta)n$. By Claim \ref{fact}, we can pick $u_1,u_2\in N(v_1v_2)\cap (V_1\setminus V(\mathcal{T}))$ such that $|N(u_1u_2)\cap (V_2\setminus V(\mathcal{T}))|\ge |N(u_1u_2)|-\zeta n-|V(\mathcal{T})|$, which follows from $(\alpha-20\beta-\zeta)^2n^2>\gamma n^2$.
Take $w\in N(u_1u_2)\cap (V_2\setminus V(\mathcal{T}))$.
Let $T=\{v_1v_2u_1,v_1v_2u_2,u_1u_2w\}$. Then $\textbf{i}_{\mathcal{P}}(T)=2\textbf{u}_1+\textbf{u}_2+2\textbf{u}_3$.
Similarly, we have for all but at most $\gamma n^2$ pairs $S$ with index vector $\textbf{u}_2+\textbf{u}_3$, we have $|N(S)\cap V_2|\ge |N(S)|-\zeta n$. 
Choose $v'_1,v'_2\in V_2\setminus V(\mathcal{T})$ with $|N(v'_1v'_2)\cap (V_3\setminus V(\mathcal{T}))|>(\alpha-20\beta-\zeta)n$. By the assumption in Lemma \ref{kmorethan4}, we can take $u'_1,u'_2\in N(v'_1v'_2)\cap (V_3\setminus V(\mathcal{T}))$ such that $|N(u'_1u'_2)\cap (V_1\setminus V(\mathcal{T}))|\ge |N(u'_1u'_2)|-\zeta n-|V(\mathcal{T})|$, which follows from $(\alpha-20\beta-\zeta)^2n^2>\gamma n^2$.
Take $w'\in N(u'_1u'_2)\cap (V_1\setminus V(\mathcal{T}))$.
Let $T'=\{v'_1v'_2u'_1,v'_1v'_2u'_2,u'_1u'_2w'\}$. Then $\textbf{i}_{\mathcal{P}}(T')=\textbf{u}_1+2\textbf{u}_2+2\textbf{u}_3$.

Wenever $r=2$ or $r\ge 3$, $\mathbf{i}_{\mathcal{P}}(T)-\mathbf{i}_{\mathcal{P}}(T')=\textbf{u}_1-\textbf{u}_2$, contradicts the maximality of $\mathcal{T}$. Therefore, $\textbf{u}_1-\textbf{u}_2\in L^\beta (\mathcal{P})$. By Lemma \ref{transferral}, $V_1\cup V_2$ is $\left(T_3,\tfrac{\beta}{2}n, 5Ct\right)$-closed.
\end{proof}

\subsubsection{Proof of Lemma~\ref{sametpart}}
This section presents an elementary proof of Lemma~\ref{sametpart}.
\begin{proof}[{\bf Proof}] 

Suppose $H$ is an $(\epsilon_k, \mathbf{x})$-complete $r$-edge-colored $k$-graph on $n$ vertices, which contains fewer than $\xi_k n^{k+1}$ rainbow tight 2-paths. We proceed by induction on $k$. Let $H^k:=H$. Let \(\phi\) be the $r$-edge-coloring of $H^k$, and $R_k$ denote the number of rainbow tight 2-paths in $H^k$. 
Additionally choose constants $c_1=c_1(k),~c_2=c_2(k),~c_3=c_3(k)$ satisfying
$$1/n\ll \epsilon_k,\xi_k \ll c_1, c_2 \ll c_3 \ll \zeta_k, \delta.$$

We first prove the statement holds for $k = 2$. 
Since $H^2$ is $(\epsilon_2, \mathbf{x})$-complete, we have $e(H^2) \ge (1 - \epsilon_2) \cdot \prod_{j = 1}^r \binom{|V_j|}{x_j}$. Recall that $\mathbf{x}=(x_1,\cdots,x_r)$ is a $k$-vector. 
Let $I = \{i : x_i \ne 0\}$. Then $1 \le |I| \le 2$.  
We write \( I = \{l_1, l_2\} \) if \( l_1 \ne l_2 \), and \( I = \{l_1\} \) otherwise, so that \( I \) is not a multiset. 
Let $G := H^2\big[\bigcup_{i \in I} V_i\big].$
Note that $E(H^2) = E(G)$, so $d_{G}(v) = d_{H^2}(v)$ for every vertex $v \in V(G)$. For convenience, we write $d(v) := d_{G}(v) = d_{H^2}(v)$.
It follows that $|V(G)| = \sum_{i \in I} |V_i|$ and 
$e(G) \ge (1 - \epsilon_2) \cdot \prod_{j \in I} \binom{|V_j|}{x_j} \ge (1 - \epsilon_2) \binom{\delta n}{2}$.  
Hence, there are at least $(1 - \sqrt{\epsilon_2}) |V_{l_1}|$ vertices $v \in V_{l_1}$ in $G$ with $d(v) \ge (1 - \sqrt{\epsilon_2}) |V_{l_2}|$, and at least $(1 - \sqrt{\epsilon_2}) |V_{l_2}|$ vertices $v \in V_{l_2}$ with $d(v) \ge (1 - \sqrt{\epsilon_2}) |V_{l_1}| $.
Let $W$ be the set of vertices $v \in V(G)$ satisfying $d(v) \ge (1 - \sqrt{\epsilon_2}) |V_{l_2}|$ or $d(v) \ge (1 - \sqrt{\epsilon_2}) |V_{l_1}|$.  
Then $|W| \ge (1 - \sqrt{\epsilon_2}) |V(G)|$.
For every vertex $v \in V(G)$ and every $i \in [r]$, let $E_i(v) = \{ e : v \in e,\ \phi(e) = i \}$ and $d_i(v) = |E_i(v)|$.  

We claim that for all but at most $c_1 n$ vertices $v \in V(G)$, there exists some $i^v \in [r]$ such that $d_{i^v}(v) \ge d(v) - (r - 1) c_2 n$.
Otherwise, there would be more than $c_1 n$ vertices $v \in V(G)$ for which, for every $i \in [r]$, we have $\sum_{j \in [r] \setminus \{i\}} d_j(v) \ge (r - 1) c_2 n$.  
Let $X$ be the set of such vertices. Then $|X| \ge c_1 n$.  
Therefore, for each vertex $v \in X$, there must exist two distinct colors $i_v$ and $j_v$ such that $d_{i_v}(v) \ge c_2 n$ and $d_{j_v}(v) \ge c_2 n$.
Since $\xi_2 \ll c_1,\, c_2$, we deduce that
\begin{equation*}
\begin{aligned}
R_2 &\ge \sum_{v \in X} d_{i_v}(v) \cdot d_{j_v}(v) 
\ge c_1 n \cdot c_2 n \cdot c_2 n 
= c_1 c_2^2 n^3 
> \xi_2 n^3.
\end{aligned}
\end{equation*}
This contradicts the assumption that 
\(
R_2 < \xi_2 n^3.
\)

Let $Y_1 = \{v \in V_{l_1} : \exists\, i \in [r],\ d_i(v) \ge d(v) - (r-1)c_2 n,\ d(v) \ge (1 - \sqrt{\epsilon_2}) |V_{l_2}|\}$,
$Y_2 = \{v \in V_{l_2} : \exists\, i \in [r],\ d_i(v) \ge d(v) - (r-1)c_2 n,\ d(v) \ge (1 - \sqrt{\epsilon_2}) |V_{l_1}|\}$,  
and define $Y = Y_1 \cup Y_2$ (note that $Y = Y_1 = Y_2$ when $l_1 = l_2$).
It follows that  
$|Y| \ge |V(G)| - c_1 n - \sqrt{\epsilon_2}n$.
For each vertex $y \in Y$, we have 
$d_{i^y}(y) \ge (1 - \sqrt{\epsilon_2}) |V_{l_2}| - (r-1)c_2 n$ if $y \in Y_1$, and  
$d_{i^y}(y) \ge (1 - \sqrt{\epsilon_2}) |V_{l_1}| - (r-1)c_2 n$ if $y \in Y_2$.
For each $i \in [r]$, let $U_i = \{y \in Y : i^y = i\}$.

\begin{claim}\label{claim:2.13}
There is $i\in[r]$ such that $\sum_{j\in [r]\setminus\{i\}} |U_j|\le (r-1) c_3 n$. 
\end{claim}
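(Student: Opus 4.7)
The plan is to argue by contradiction. Suppose the claim fails, so for every $i\in[r]$ we have $\sum_{j\ne i}|U_j|>(r-1)c_3 n$. If only a single index $j^{\ast}$ satisfied $|U_{j^{\ast}}|>c_3 n$, then the choice $i=j^{\ast}$ would give $\sum_{j\ne j^{\ast}}|U_j|\le (r-1)c_3 n$, a contradiction. Hence there exist two distinct colours $a\ne b$ in $[r]$ with $|U_a|,|U_b|>c_3 n$, and my goal is to produce more than $\xi_2 n^3$ rainbow tight $2$-paths in $H^2$ whose two edges are coloured $a$ and $b$, contradicting the assumption $R_2<\xi_2 n^3$.

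The key observation that drives everything is that, by the definition of $U_c$, every $v\in U_c$ satisfies $d_c(v)\ge d(v)-(r-1)c_2 n$, while $d(v)$ is within $\sqrt{\epsilon_2}\,n$ of the size of the ``opposite'' part of $G$; so the $c$-coloured neighbourhood of each vertex in $U_c$ occupies almost all of that opposite part, making inclusion-exclusion among colour-correct neighbourhoods very effective. The argument then splits on $|I|\in\{1,2\}$. When $|I|=1$, $G$ is near-complete on $V_{l_1}$: for any $v_1\in U_a$ and $v_2\in U_b$ (distinct since $U_a\cap U_b=\emptyset$), inclusion-exclusion in $V_{l_1}$ gives $|N_a(v_1)\cap N_b(v_2)|\ge |V_{l_1}|/2$, yielding at least $|U_a||U_b|\cdot|V_{l_1}|/2\ge \tfrac12 c_3^2\delta n^3>\xi_2 n^3$ rainbow $2$-paths. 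When $|I|=2$, by pigeonhole each of $U_a,U_b$ has at least $c_3 n/2$ vertices in one of $V_{l_1},V_{l_2}$; if these two majority parts coincide (WLOG both in $V_{l_1}$), the same inclusion-exclusion carried out in the opposite part $V_{l_2}$ produces at least $(c_3 n/2)^2\cdot|V_{l_2}|/2\gg \xi_2 n^3$ rainbow $2$-paths. If they differ, say $|U_a\cap V_{l_1}|,|U_b\cap V_{l_2}|\ge c_3 n/2$, I use a two-step chain: for each $v_1\in U_a\cap V_{l_1}$, inclusion-exclusion in $V_{l_2}$ gives $|N_a(v_1)\cap U_b\cap V_{l_2}|\ge c_3 n/4$, and each such $u\in U_b\cap V_{l_2}$ has $|N_b(u)\cap V_{l_1}|\ge (1-o(1))|V_{l_1}|$, producing at least $(c_3 n/2)(c_3 n/4)(\delta n/2)\gg \xi_2 n^3$ rainbow $2$-paths of colours $(a,b)$.

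The main obstacle is the last ``straddling'' subcase of $|I|=2$, where $U_a$ and $U_b$ lie in opposite bipartite parts and no single vertex of $G$ can be adjacent to both, so the direct common-neighbourhood count used in the other cases fails. The remedy is precisely the two-step walk above: chain an $a$-coloured edge from $U_a\cap V_{l_1}$ into $U_b\cap V_{l_2}$ with a $b$-coloured edge from that vertex back into $V_{l_1}$, giving a genuine rainbow $2$-path. Throughout, all error terms of size $\sqrt{\epsilon_2}\,n$, $c_1 n$ and $c_2 n$ are absorbed by $c_3 n$ via the hierarchy $\epsilon_2,\xi_2\ll c_1,c_2\ll c_3\ll \zeta_2,\delta$, and the base counts of order $c_3^2\delta n^3$ comfortably dominate $\xi_2 n^3$.
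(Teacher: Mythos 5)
Your proof is correct and follows essentially the same strategy as the paper: after reducing to two large colour classes $U_a,U_b$, both arguments split on whether they sit on the same bipartition side (common-neighbour inclusion-exclusion) or on opposite sides (the two-step chain through $N_a(v_1)\cap U_b$), and absorb error terms via the hierarchy $\epsilon_2,\xi_2\ll c_1,c_2\ll c_3\ll\zeta_2,\delta$. The only cosmetic difference is that you make the pigeonhole passage to a majority part of each $U_c$ explicit and separate out the $|I|=1$ case, whereas the paper folds it into the convention $Y=Y_1=Y_2$ when $l_1=l_2$.
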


\begin{proof}
Suppose there exist two colors $i, j$ such that $|U_i| \ge c_3 n$ and $|U_j| \ge c_3 n$.
We distinguish two cases. By symmetry, assume first that $U_i, U_j \subseteq Y_1$.  
Fix a vertex $a \in U_i$ and a vertex $b \in U_j$.  
Let $N_a = \{a' \in V_{l_2} : \phi(aa') = i\}$ and $N_b = \{b' \in V_{l_2} : \phi(bb') = j\}$.  
Then  
$|N_a| \ge (1 - \sqrt{\epsilon_2}) |V_{l_2}| - (r-1)c_2 n$,  
$|N_b| \ge (1 - \sqrt{\epsilon_2}) |V_{l_2}| - (r-1)c_2 n$, and  
\[
|N_a \cap N_b| \ge |V_{l_2}| - 2\sqrt{\epsilon_2} |V_{l_2}| - 2 (r-1)c_2 n.
\]
Since $\epsilon_2,\xi_2 \ll c_2 \ll c_3 \ll \delta$, we have
\begin{equation*}
\begin{aligned}
R_2 &\ge \sum_{a \in U_i} \sum_{b \in U_j} \sum_{w \in N_a \cap N_b} 1 \\
    &\ge c_3 n \cdot c_3 n \cdot (|V_{l_2}| - 2 \sqrt{\epsilon_2} |V_{l_2}| - 2 (r-1)c_2 n) \\
    &\ge c_3 n \cdot c_3 n \cdot (\delta n - 2 \sqrt{\epsilon_2} n - 2 (r-1)c_2 n) \\
    &> \xi_2 n^3.
\end{aligned}
\end{equation*}
This contradicts the assumption that $R_2 < \xi_2 n^3$.

Now assume (again by symmetry) that $U_i \subseteq Y_1$ and $U_j \subseteq Y_2$.  
Fix a vertex $a \in U_i$, and let $N_a = \{b \in V_{l_2} : \phi(ab) = i\}$.  
Then $|N_a| \ge (1 - \sqrt{\epsilon_2}) |V_{l_2}| - (r-1)c_2 n$, and so
\[
|N_a \cap U_j| \ge |U_j| - \sqrt{\epsilon_2} |V_{l_2}| - (r-1)c_2 n \ge c_3 n - (\sqrt{\epsilon_2} n + (r-1)c_2 n).
\]
Pick a vertex $u \in N_a \cap U_j$, and 
let $N_u = \{w \in V_{l_1} : \phi(uw) = j\}$. 
Then $|N_u| \ge (1 - \sqrt{\epsilon_2}) |V_{l_1}| - (r-1)c_2 n$.  
Thus,
\begin{equation*}
\begin{aligned}
R_2 &\ge \sum_{a \in U_i} \sum_{u \in N_a \cap U_j} \sum_{w \in N_u} 1 \\
    &\geq c_3n\cdot (c_3n-\sqrt{\epsilon_2}\size{V_{l_2}} - (r-1)c_2n)\cdot ((1 -\sqrt{\epsilon_2})\size{V_{l_1}} - (r-1)c_2n)\\ 
    &\ge c_3 n \cdot (c_3 n - \sqrt{\epsilon_2} n - (r-1)c_2 n) \cdot (\delta n - \sqrt{\epsilon_2} n - (r-1)c_2 n) \\
    &> \xi_2 n^3,
\end{aligned}
\end{equation*}
again a contradiction.
Therefore, there must exist a color $i \in [r]$ such that
\[
\sum_{j \in [r] \setminus \{i\}} |U_j| \le (r - 1) c_3  n.\qedhere
\]
\end{proof}
It follows from Claim \ref{claim:2.13} and the choice $\epsilon_2\ll c_1, c_2\ll c_3\ll\zeta_2,\delta$ that the number of edges not colored $i$ in $H^2$ is at most
\begin{equation*}
\begin{aligned}
&\sum_{j \in [r] \setminus \{i\}} |U_j| \cdot n + |V(G) \setminus Y| \cdot n + (r-1)c_2n\cdot n\\
&\le (r - 1) \cdot c_3 \cdot n^2 + (c_1 n + \sqrt{\epsilon_2} n) \cdot n + (r-1)c_2n^2\\
&\le \zeta_2 \cdot e(H^2).
\end{aligned}
\end{equation*}
Hence, $H^2$ is $\zeta_2$-monochromatic.

Now let $k \geq 3$, and assume the statement holds for every integer $k'$ with $2 \le k' < k$. Since $H^k$ is $(\epsilon_k, \mathbf{x})$-complete, we have 
$e(H^k) \geq (1 - \epsilon_k) \prod_{j=1}^r \binom{\size{V_j}}{x_j}$.
Let $I^k = \{ i : x_i \ne 0 \}$. Then $1 \le \size{I^k} \le k$.  
Define $G := H^k\big[ \bigcup_{i \in I^k} V_i \big]$. 
Hence, $\size{V(G)} = \sum_{i \in I^k} \size{V_i} \ge \delta n$ and 
$e(G) \ge (1 - \epsilon_k) \cdot \prod_{j \in I^k} \binom{\size{V_j}}{x_j}$.
Without loss of generality, assume $x_1 \ne 0$ and let $\hat{\mathbf{x}} = (x_1 - 1, x_2, \dots, x_r)$. 
Then there are at least 
$(1 - \sqrt{\epsilon_k}) \binom{\size{V_1}}{x_1 - 1} \prod_{j \in I^k \setminus \{1\}} \binom{\size{V_j}}{x_j}$ 
many $(k-1)$-sets $S = \{v_1, \dots, v_{k-1}\}$ with $\mathbf{i}_{\mathcal{P}}(S) = \hat{\mathbf{x}}$ such that 
$d_G(S) \ge (1 - \sqrt{\epsilon_k}) \size{V_1}$.
For every $(k-1)$-set $S$ in $G$ and $i\in[r]$, define
\[
E_i(S) := \left\{e \in E(G) : S\<e,~\phi(e) = i \right\},
\]
and let $d_i(S) := \size{E_i(S)}$.

We claim that
for all but at most $c_1n^{k-1}$ $(k-1)$-sets $S$ in $G$ with $\mathbf{i}_{\mathcal{P}}(S) = \hat{\mathbf{x}}$, there exists some $i^S \in [r]$ such that
\[
d_{i^S}(S) \ge d_G(S) - (r-1)c_2n.
\]
Suppose for the contrary that there are more than $c_1n^{k-1}$ such $(k-1)$-sets $S$ in $G$ with $\mathbf{i}_{\mathcal{P}}(S) = \hat{\mathbf{x}}$ for which
\[
\sum_{j \in [r] \setminus \{i\}} d_j(S) > (r-1)c_2n \quad \text{for every } i \in [r].
\]
Let $\mathcal{X}^{k-1}$ denote the collection of these $(k-1)$-sets, so $\size{\mathcal{X}^{k-1}} \ge c_1n^{k-1}$. Therefore, for each $S \in \mathcal{X}^{k-1}$, there exist two distinct colors $i(S), j(S) \in [r]$ such that
\[
d_{i(S)}(S) \ge c_2n \quad \text{and} \quad d_{j(S)}(S) \ge c_2n.
\]
Hence, as $\xi_k \ll c_1,c_2$,
\begin{equation*}
\begin{aligned}
R_k &\ge \sum_{S \in \mathcal{X}^{k-1}} d_{i(S)}(S) \cdot d_{j(S)}(S) \\
&\ge c_1n^{k-1} \cdot c_2n \cdot c_2n \\
&= c_1c_2^2 \cdot n^{k+1} \\
&> \xi_k n^{k+1}.
\end{aligned}
\end{equation*}
This contradicts our assumption that
\(
R_k < \xi_k n^{k+1}. 
\)

Additionally pick constants $\epsilon_{k-1}$, $\xi_{k-1}$, and $\zeta_{k-1}$ such that
$1/n \ll \epsilon_k, \xi_k \ll  \xi_{k-1} \ll c_1,c_2  \ll \epsilon_{k-1} \ll \zeta_{k-1}\ll \zeta_{k},\delta.$
Let $\mathcal{S}^{k-1}$ be the collection of all $(k-1)$-sets $S \subseteq V(G)$ with $\mathbf{i}_{\mathcal{P}}(S) = \hat{\mathbf{x}}$, such that there exists $i^S \in [r]$ satisfying
\[
d_{i^S}(S) \geq d_G(S) - (r-1)c_2n \quad \text{and} \quad d_G(S) \geq (1 - \sqrt{\epsilon_k}) |V_1|.
\]
It follows from the aforementioned claim that
\[
|\mathcal{S}^{k-1}| \geq (1 - \sqrt{\epsilon_k}) \binom{|V_1|}{x_1 - 1} \prod_{j \in I^k \setminus \{1\}} \binom{|V_j|}{x_j} - c_1 n^{k-1}.
\]
For each $(k-1)$-set $S \in \mathcal{S}^{k-1}$, we have
\[
d_{i^S}(S) \geq (1 - \sqrt{\epsilon_k}) |V_1| - (r-1)c_2 n.
\]
Let $Q_i = \{S \in \mathcal{S}^{k-1} : i^S = i \}$ for each $i \in [r]$. Construct a $(k-1)$-graph $H^{k-1}$ with vertex set $V(H^{k-1}) = V(G)$ and edge set $E(H^{k-1}) = \mathcal{S}^{k-1}$. Define an edge-coloring $\psi$ on $H^{k-1}$ by setting $\psi(S) = i^S$ for each $S \in E(H^{k-1})$ and let $R_{k-1}$ be the number of rainbow tight 2-paths in $H^{k-1}$.
Then
\[
\begin{aligned}
d^{\hat{\mathbf{x}}}(H^{k-1}) &= \frac{|\mathcal{S}^{k-1}|}{\binom{|V_1|}{x_1 - 1} \prod_{j = 2}^{r} \binom{|V_j|}{x_j}} \\
&\geq \frac{(1 - \sqrt{\epsilon_k}) \binom{|V_1|}{x_1 - 1} \prod_{j \in I^k \setminus \{1\}} \binom{|V_j|}{x_j} - c_1 n^{k-1}}{\binom{|V_1|}{x_1 - 1} \prod_{j = 2}^{r} \binom{|V_j|}{x_j}} \\
& > 1 - \sqrt{\epsilon_k} -  c_1 \factorial{k}\\
&> 1 - \epsilon_{k-1},
\end{aligned}
\]

\noindent where the penultimate inequality uses Stirling's formula. Then $H^{k-1}$ is $(\epsilon_{k-1}, \hat{\mathbf{x}})$-complete.

We further claim that $R_{k-1} \leq \xi_{k-1} n^k$. Suppose for contradiction that $R_{k-1} > \xi_{k-1} n^k$. Since any two $(k-1)$-edges in $H^{k-1}$ have at least $|V_1| - 2 \sqrt{\epsilon_k} |V_1| - 2 (r-1)c_2 n$ common neighbors in $G$, we have
\[
\begin{aligned}
R_k &\geq R_{k-1} \cdot \left( |V_1| - 2 \sqrt{\epsilon_k} |V_1| - 2 (r-1)c_2 n \right) \\
&\geq \xi_{k-1} n^k \cdot \left( \delta n - 2 \sqrt{\epsilon_k} n - 2 (r-1)c_2 n \right) \\
&> \xi_k n^{k+1},
\end{aligned}
\]
which contradicts the assumption that $R_k < \xi_k n^{k+1}$. 
By the choice $1/n \ll \xi_{k-1} \ll \epsilon_{k-1} \ll \zeta_{k-1}\ll \delta$ and the induction hypothesis, $H^{k-1}$ is $\zeta_{k-1}$-monochromatic. Therefore, there are at least
\begin{equation*}
  \begin{aligned}
  &(1-\zeta_{k-1}) e(H^{k-1}) \big((1-\sqrt{\epsilon_k})\size{V_1}- (r-1)c_2n\big) \\
  &\ge (1-\zeta_{k-1}) \bigg((1-\sqrt{\epsilon_k})\binom{\size{V_1}}{x_1 - 1}\prod_{j\in I^k\setminus\{1\}}\binom{\size{V_j}}{x_j} - c_1 n^{k-1}\bigg) \big((1-\sqrt{\epsilon_k})\size{V_1} - (r-1)c_2n\big) \\ 
  & \ge (1-\zeta_k)e(H^k)
 \end{aligned}   
 \end{equation*}
edges in $H^k$ that have the same color, where the last inequality follows from the choice $\epsilon_k \ll c_1,c_2 \ll\zeta_{k-1}  \ll \zeta_k$.
Hence, $H^k$ is $\zeta_k$-monochromatic.
\end{proof}

\section{Almost perfect tilings} \label{sec:tiling}
This section proves the almost cover lemma (Lemma~\ref{lem:tiling}). 
We first establish the fractional analogue (Lemma~\ref{frac_tiling_with_bound}), then apply a convenient reformulation of the Pippenger–Spencer theorem~\cite{PS} obtained by Bowtell, Kathapurkar, Morrison and Mycroft \cite{BKMM} to deduce Lemma \ref{lem:tiling}. To begin with, we introduce the following standard definitions for fractional tilings in $k$-graphs.

Let \(H\) and \(F\) be two \(k\)-graphs. Define \(\mathcal{F}(H)\) as the set of all copies \(F'\) of \(F\) in \(H\). For two distinct vertices \(u, v \in V(H)\), further define
$\mathcal{F}_u(H)$ as the set of copies \(F' \in \mathcal{F}(H)\) containing \(u\), \(\mathcal{F}_{uv}(H)\) as the set of copies \(F' \in \mathcal{F}(H)\) containing both \(u\) and \(v\).
A \emph{fractional \(F\)-tiling} of \(H\) is a function \(\omega : \mathcal{F}(H) \to [0, 1]\) satisfying
\(
\sum_{F' \in \mathcal{F}_u(H)} \omega(F') \leq 1
\)
for every $u \in V(H)$.
Furthermore, we say \(\omega\) is \emph{perfect} if
\(
\sum_{F' \in \mathcal{F}_u(H)} \omega(F') = 1
\)
for every $u\in V(H)$.
For any two distinct vertices \(u, v \in V(H)\), define
$\omega(u) = \sum_{F' \in \mathcal{F}_u(H)} \omega(F')$ and
$\omega(uv) = \sum_{F' \in \mathcal{F}_{uv}(H)} \omega(F')$.
When we consider $F$-tilings in the specific case where $F=T_k$, we write $\mathcal{T}_k(H)$ in place of $\mathcal{F}(H)$.

We now characterize perfect fractional $F$-tilings in $k$-graphs. Let $H$ be a $k$-graph with vertex set $V(H) = [n]$. The \emph{characteristic vector} of $U \subseteq [n]$ is $\mathds{1}_U \in \mathbb{R}^n$, where $(\mathds{1}_U)_i=1$ if $i \in U$, $(\mathds{1}_U)_i=0$ otherwise.
The \emph{positive cone} of $S = \{\mathbf{v}_1, \dots, \mathbf{v}_t\} \subseteq \mathbb{R}^n$ is
\[
\operatorname{cone}(S) := \left\{ \textstyle\sum_{i=1}^t \lambda_i \mathbf{v}_i : \lambda_i\in \mathbb{R}, \lambda_i\ge 0~\text{for each}~i\in [t] \right\}.
\]

Let $\mathbf{1} \in \mathbb{R}^n$ denote the all-ones vector. Then $H$ has a perfect fractional $F$-tiling if and only if 
\(
\mathbf{1} \in \operatorname{cone}\left( \left\{ \mathds{1}_{V(F')} : F' \in \mathcal{F}(H) \right\} \right),
\)
where the tiling weights $\omega(F')$ correspond to the coefficients $\lambda_i$ for each copy $F'\in \mathcal{F}(H)$ of $F$.

Our key application of Farkas' lemma for linear systems provides a useful consequence of non-existence of tiling: $H$ contains no perfect fractional $F$-tiling, namely that there exists some vector $\mathbf{a} \in \mathbb{R}^n$ satisfying
\(
\mathbf{a} \cdot \mathds{1}_{V(F')} \geq 0 \mbox{ for every } F' \in \mathcal{F}(H),
\)
while simultaneously satisfying
\(
\mathbf{a} \cdot \mathbf{1} < 0.
\)

\begin{lemma}[Farkas' lemma, \cite{LY}] \label{lem:farkas}
    If $X \subseteq \mathbb{R}^n$ is finite and ${\bf y} \in \mathbb{R}^n \setminus \operatorname{cone}(X)$, then there exists some ${\bf a} \in \mathbb{R}^n$ such that ${\bf a}\cdot {\bf x} \geq 0$ for all ${\bf x} \in X$ and ${\bf a} \cdot {\bf y} <0$.
\end{lemma}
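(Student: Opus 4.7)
The plan is to prove Farkas' lemma via the separating hyperplane theorem, with the key preliminary step being that $\operatorname{cone}(X)$ is closed in $\mathbb{R}^n$ whenever $X$ is finite. Convexity of $\operatorname{cone}(X)$ and the inclusion $\mathbf{0} \in \operatorname{cone}(X)$ are immediate from the definition, so the only topological content lies in closedness.

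To establish closedness, I would invoke Carathéodory's theorem for conical hulls: every $\mathbf{v} \in \operatorname{cone}(X)$ can be written as a non-negative combination of a linearly independent subset of $X$. This yields a decomposition $\operatorname{cone}(X) = \bigcup_S \operatorname{cone}(S)$, where $S$ ranges over the finitely many linearly independent subsets of $X$. For each such $S = \{\mathbf{v}_{i_1}, \dots, \mathbf{v}_{i_m}\}$, the map $(\lambda_1, \dots, \lambda_m) \mapsto \sum_j \lambda_j \mathbf{v}_{i_j}$ is a linear injection, hence a homeomorphism onto its image, so $\operatorname{cone}(S)$ is the continuous image of the closed set $[0,\infty)^m$ under a proper map and is therefore closed. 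A finite union of closed sets is closed, so $\operatorname{cone}(X)$ is closed. This is the step I expect to be the main obstacle, since it is tempting but incorrect to argue closedness directly from the finiteness of $X$ without the linear-independence reduction; without Carathéodory, one must instead argue by induction on $|X|$ with a delicate case split on whether a newly added generator lies in the linear span of the previous ones.

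Once closedness is in hand, since $\mathbf{y} \notin \operatorname{cone}(X)$ I would apply the separating hyperplane theorem to obtain a non-zero $\mathbf{a} \in \mathbb{R}^n$ and $c \in \mathbb{R}$ with
\[
\mathbf{a}\cdot\mathbf{v} \;\geq\; c \quad \text{for all } \mathbf{v} \in \operatorname{cone}(X), \qquad \mathbf{a}\cdot\mathbf{y} \;<\; c.
\]
Because $\mathbf{0} \in \operatorname{cone}(X)$ forces $c \leq 0$, this already gives $\mathbf{a}\cdot\mathbf{y} < 0$.

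Finally, to upgrade the bound from $c$ to $0$ on each generator, suppose for contradiction that $\mathbf{a}\cdot\mathbf{x}_0 < 0$ for some $\mathbf{x}_0 \in X$. Since $\lambda\mathbf{x}_0 \in \operatorname{cone}(X)$ for every $\lambda \geq 0$, we have $\mathbf{a}\cdot(\lambda\mathbf{x}_0) = \lambda(\mathbf{a}\cdot\mathbf{x}_0) \to -\infty$ as $\lambda \to \infty$, contradicting the uniform lower bound $c$ on $\operatorname{cone}(X)$. Hence $\mathbf{a}\cdot\mathbf{x} \geq 0$ for every $\mathbf{x} \in X$, completing the proof. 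The whole argument is essentially textbook convex analysis once closedness is secured; the authors' citation to \cite{LY} presumably covers exactly this line.
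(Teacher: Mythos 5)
The paper does not prove this lemma; it states it as a cited result from Luenberger and Ye \cite{LY} and uses it as a black box, so there is no in-paper proof to compare against. Your proposal is a correct self-contained derivation following the standard convex-analysis route: establish that a finitely generated cone is closed, apply the strict separating hyperplane theorem to the point $\mathbf{y}$ and the closed convex set $\operatorname{cone}(X)$, use $\mathbf{0}\in\operatorname{cone}(X)$ to force the separation constant $c\le 0$ (hence $\mathbf{a}\cdot\mathbf{y}<0$), and then use the unboundedness of rays in the cone to upgrade $\mathbf{a}\cdot\mathbf{x}\ge c$ to $\mathbf{a}\cdot\mathbf{x}\ge 0$ on the generators. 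You correctly identify the one genuinely delicate step: closedness does not follow directly from finiteness of $X$ (the image of a closed set under a non-injective linear map need not be closed), and the Carath\'eodory reduction to linearly independent subsets is exactly the right fix, since a linear injection $\mathbb{R}^m\to\mathbb{R}^n$ is a homeomorphism onto a closed subspace and hence carries $[0,\infty)^m$ to a closed set, and a finite union of closed sets is closed. All steps check out; this is essentially the argument one finds in \cite{LY}.
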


The following additional definitions are required in our proof.
Let $H$ be a $k$-graph with vertex set $V(H)=[n]$. Let $U = \{u_1, u_2, \dots, u_{\ell}\}$ and $W = \{w_1, w_2, \dots, w_{\ell}\}$ be two ordered $\ell$-subsets of $V(H)$ satisfying $u_1 < u_2 < \cdots < u_{\ell}$ and $w_1 < w_2 < \cdots < w_{\ell}$. We say $U$ {\em dominates} $W$ if $w_i \leq u_i$ for every $i \in [\ell]$.

\subsection{$B$-avoiding fractional tilings}

Given a graph \(B\) with \(V(B) = V(H)\), a copy \(F' \in \mathcal{F}(H)\) is \emph{\(B\)-avoiding} if \(E(B)\) contains no edge \(uv\) with \(u, v \in V(F')\). A fractional $F$-tiling $\omega$ is \emph{$B$-avoiding} if $\omega(F')>0$ implies $F'$ is $B$-avoiding; equivalently, \(\omega(uv) = 0\) for every \(uv \in E(B)\).
Our next lemma shows that under identical minimum codegree conditions, every $k$-graph on $n$ vertices either is
$\gamma$-extremal or admits a perfect $B$-avoiding fractional tiling. Our proof primarily draws on the approach used by Bowtell, Kathapurkar, Morrison and Mycroft~\cite{BKMM}.

\begin{lemma}\label{frac_tiling} 
Suppose $1/n \ll \varepsilon, \alpha \ll \gamma$ and let $V=\{v_1,v_2,\ldots,v_n\}$ be an $n$-vertex set ordered with $v_1<v_2<\dots<v_n$. For any $k$-graph $H = (V, E)$ with $\delta(H) \geq \left(\frac{2}{2k-1} - \alpha\right)n$ and a graph $B$ on $V$ with maximum degree $\Delta(B) \leq \varepsilon n$, either $H$ admits a perfect $B$-avoiding fractional $T_k$-tiling or $H$ is $\gamma$-extremal.
\end{lemma}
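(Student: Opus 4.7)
The plan is to argue by contradiction using the Farkas alternative (Lemma~\ref{lem:farkas}). Suppose $H$ admits no perfect $B$-avoiding fractional $T_k$-tiling; then there is a weight vector $\mathbf{a}\in\mathbb{R}^{V}$ with $\mathbf{a}\cdot\mathds{1}_{V(T_k')}\ge 0$ for every $B$-avoiding copy $T_k'$ of $T_k$ in $H$, while $\mathbf{a}\cdot\mathbf{1}<0$. The task is to convert this algebraic witness into the structural statement that $H$ is $\gamma$-extremal.

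The shape of $\mathbf{a}$ is dictated by the extremal construction: on $H_{\mathrm{ext}}$, the assignment $\mathbf{a}(v)=2k-3$ on $A$ (of size $\tfrac{2n}{2k-1}-1$) and $\mathbf{a}(v)=-2$ on the complement yields $\mathbf{a}\cdot\mathds{1}_{V(T_k')}=(2k-1)(|V(T_k')\cap A|-2)\ge 0$ (since every copy of $T_k$ in $H_{\mathrm{ext}}$ meets $A$ in at least two vertices) and $\mathbf{a}\cdot\mathbf{1}=-(2k-1)<0$. Motivated by this, I would sort the vertices so that $\mathbf{a}(v_1)\ge\cdots\ge\mathbf{a}(v_n)$, locate an index $m$ close to $\tfrac{2n}{2k-1}$ at which the weights drop (its existence is essentially forced, else $\mathbf{a}$ is nearly constant and $\mathbf{a}\cdot\mathbf{1}<0$ propagates to a single $T_k$-copy of weight $(2k-1)\bar{\mathbf{a}}<0$, contradicting Farkas), and set $A:=\{v_1,\dots,v_m\}$, $S:=V\setminus A$. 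The goal then becomes to show $e(H[S])\le \gamma\binom{n}{k}$; after trimming $S$ to size exactly $\lfloor\tfrac{(2k-3)n}{2k-1}\rfloor$, this certifies $\gamma$-extremality.

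Assume for contradiction that $e(H[S])>\gamma\binom{n}{k}$. The heart of the argument is to construct a $B$-avoiding copy $T_k'$ of $T_k$ whose vertex set contains at most one vertex of $A$, since by the choice of $m$ such a copy has strictly negative $\mathbf{a}$-weight, contradicting Farkas. To build it, I would use $e(H[S])>\gamma\binom{n}{k}$ with convexity to produce $\Omega(n^{k+1})$ tight $2$-paths of $H$ supported in $S$, then discard the $B$-spoilt ones using $\Delta(B)\le\varepsilon n$. For a typical surviving tight $2$-path $\{e_1,e_2\}$ with $e_1\cap e_2\subseteq S$ and tips $u,v\in S$, apply the codegree condition $\delta(H)\ge \bigl(\tfrac{2}{2k-1}-\alpha\bigr)n$ to a $(k-1)$-set $X\subseteq S\setminus(e_1\cap e_2)$ containing $\{u,v\}$ to extract a third edge $e_3\supseteq\{u,v\}$ lying in $S$, disjoint from $e_1\cap e_2$, and $B$-avoiding with respect to $e_1\cup e_2$. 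The triple $(e_1,e_2,e_3)$ is the desired $B$-avoiding $T_k$-copy in $S$.

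The main obstacle lies in this final extension: the codegree only guarantees roughly $\tfrac{2n}{2k-1}-\alpha n$ neighbors of each $(k-1)$-set, which barely exceeds $|A|$, so a priori every neighbor of $X$ could sit in $A$ and block the placement of $e_3$ inside $S$. Overcoming this exploits the supposition $e(H[S])>\gamma\binom{n}{k}$ itself: a standard double count shows that the average degree of a $(k-1)$-set inside $S$ (in $H[S]$) is $\Omega(\gamma n)$, so for many $(k-1)$-sets in $S$ there are already $\Omega(n)$ neighbors inside $S$, which survives the deletion of the $O(k)$ forbidden vertices from $e_1\cup e_2$. The additional requirement of $B$-avoidance across all $\binom{2k-1}{2}$ pairs of the emerging $T_k$-copy is controlled by $\Delta(B)\le\varepsilon n$ via greedy deletion of at most $(2k-1)\varepsilon n$ vertices per extension step, a loss absorbed by the linear slack in both the codegree bound and the assumed density of $H[S]$.
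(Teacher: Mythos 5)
Your approach begins identically to the paper (Farkas' lemma produces a witness vector $\mathbf{a}$ with $\mathbf{a}\cdot\mathbf{1}<0$ and $\mathbf{a}\cdot\mathds{1}_{V(T)}\ge 0$ for every $B$-avoiding $T_k$-copy $T$, and you sort the coordinates of $\mathbf{a}$), but the key step — converting the algebraic witness into a structural contradiction — has a genuine gap. You propose to locate a single $B$-avoiding $T_k$-copy $T'$ with at most one vertex in the high-weight region $A$ and claim that "by the choice of $m$ such a copy has strictly negative $\mathbf{a}$-weight." This claim is false in general: the Farkas witness $\mathbf{a}$ is not constrained to look like the extremal construction's weight vector. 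For instance, $\mathbf{a}$ could have a single enormous positive entry $a_1$ balanced against $n-1$ mildly negative entries; then $\sum_i a_i<0$ but any $T_k$-copy touching $v_1$ has weight roughly $a_1-(2k-2)>0$, and any copy avoiding $v_1$ has weight close to $-(2k-1)$, which is not strictly negative enough to exploit if the other entries are only slightly below zero. More broadly, knowing $\sum_i a_i<0$ gives no pointwise control on any fixed set of $2k-1$ coordinates; your remark that the existence of the desired cut "is essentially forced" is not something you can make rigorous from the sign of the total sum alone.

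The paper handles this by a domination/partition argument that sidesteps the problem entirely. After sorting $a_1\le\cdots\le a_n$, it exhibits three explicit families $\mathcal{V}_1,\mathcal{V}_2,\mathcal{V}_3$ of $(2k-1)$-sets whose indicator vectors sum to the all-ones vector, and three template copies $T^1,T^2,T^3\in\mathcal{T}_{k,B}$ such that every $S\in\mathcal{V}_i$ dominates $T^i$ (coordinate-wise in the sorted order). Domination plus the ascending ordering gives $\mathbf{a}\cdot\mathds{1}_{V(T^i)}\le\mathbf{a}\cdot\mathds{1}_S$, and summing over the partition yields $0>\mathbf{a}\cdot\mathbf{1}\ge\sum_i|\mathcal{V}_i|\,\mathbf{a}\cdot\mathds{1}_{V(T^i)}\ge 0$, a contradiction valid for arbitrary $\mathbf{a}$. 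The density assumption on $H[S]$ (failure of $\gamma$-extremality) and the codegree bound enter only in the combinatorial step of building $T^1,T^2,T^3$ at the prescribed index ranges — this is where your tight-$2$-path extension idea is close in spirit to the paper's construction of $T^2$ — but the averaging skeleton is essential and cannot be replaced by a single-copy argument.
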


We postpone the proof of Lemma~\ref{frac_tiling} to Section~\ref{sub2}. Using Lemma~\ref{frac_tiling}, we show there exists an alternative fractional 
$T_k$-tiling whose weight is bounded on every pair of vertices,
where the proof strategy directly adapts the approach of Lemma 4.3 in \cite{BKMM}. Nevertheless, to ensure expository completeness and clarity, we provide a self-contained proof below.
\begin{lemma} \label{frac_tiling_with_bound} 
Suppose that $1/n \ll \eps, \alpha \ll \gamma$. Let $H$ be a $k$-graph on $n$ vertices with $\delta(H) \geq 2n/(2k-1)-\alpha n$. 
If $H$ is not $\gamma$-extremal, then $H$ admits a perfect fractional $T_k$-tiling $\omega$ in which $\omega(uv) < 1/(\eps n)$ for all $u, v \in V(H)$.
\end{lemma}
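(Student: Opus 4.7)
My plan is to obtain the desired tiling as a minimiser of a convex penalty function over the convex compact set $\mathcal{W}$ of all perfect fractional $T_k$-tilings of $H$. Applying Lemma~\ref{frac_tiling} with $B$ equal to the empty graph shows $\mathcal{W}\neq\emptyset$, and viewing $\mathcal{W}$ as a subset of $\mathbb{R}^{\mathcal{T}_k(H)}$ makes it compact and convex. Set $T:=1/(2\eps n)$ and define the continuous convex penalty
\[
\Phi(\omega) := \sum_{\{u,v\}\in\binom{V(H)}{2}} \bigl(\max\{0,\,\omega(uv)-T\}\bigr)^2.
\]
I pick a minimiser $\omega^{*}\in\mathcal{W}$ and aim to prove $\Phi(\omega^{*})=0$; this yields $\omega^{*}(uv)\le T<1/(\eps n)$ for every pair $\{u,v\}$, as required.

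To derive a contradiction, assume $\Phi(\omega^{*})>0$ and let $B$ be the graph on $V(H)$ with edge set $\{uv:\omega^{*}(uv)>T\}$. Using
\[
\sum_{v\neq u}\omega^{*}(uv) \;=\; \sum_{v\neq u}\sum_{F'\in\mathcal{F}_{uv}(H)}\omega^{*}(F') \;=\; (2k-2)\,\omega^{*}(u) \;\le\; 2k-2
\]
(each copy of $T_k$ containing $u$ contributes $2k-2$ to the left-hand side), and noting that each edge $uv\in B$ contributes strictly more than $T$, I get $d_B(u)<(2k-2)/T=2(2k-2)\eps n$. Since $k$ is fixed, setting $\varepsilon':=2(2k-2)\eps$ preserves the hierarchy $1/n\ll\varepsilon',\alpha\ll\gamma$, so Lemma~\ref{frac_tiling} (applied with $\varepsilon'$ in place of $\varepsilon$ and the same $\gamma$) produces a perfect $B$-avoiding fractional $T_k$-tiling $\omega'\in\mathcal{W}$.

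I then analyse $\omega_\lambda:=(1-\lambda)\omega^{*}+\lambda\omega'\in\mathcal{W}$ for small $\lambda\ge 0$. Since $\omega'(uv)=0$ for every $uv\in B$, while each pair $uv\notin B$ satisfies $\omega^{*}(uv)\le T$ and therefore contributes only $O(\lambda^2)$ to $\Phi(\omega_\lambda)$ (the inner max stays at $0$ for small $\lambda$ when $\omega^{*}(uv)<T$, and equals $\lambda\max\{0,\omega'(uv)-T\}$ when $\omega^{*}(uv)=T$), the right derivative at $\lambda=0$ reduces to
\[
2\sum_{uv\in B}(\omega^{*}(uv)-T)\bigl(\omega'(uv)-\omega^{*}(uv)\bigr) \;=\; -2\sum_{uv\in B}(\omega^{*}(uv)-T)\,\omega^{*}(uv) \;<\; 0,
\]
with strict inequality because $B\neq\emptyset$ and every summand is strictly negative. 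Thus $\Phi(\omega_\lambda)<\Phi(\omega^{*})$ for all sufficiently small $\lambda>0$, contradicting the minimality of $\omega^{*}$ and forcing $\Phi(\omega^{*})=0$.

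The main obstacle I anticipate is the bookkeeping around the parameter hierarchy: the threshold $T$ must be chosen slightly below $1/(\eps n)$ to yield a strict rather than weak bound, while still allowing the effective parameter $\varepsilon'\asymp\eps$ to satisfy $\varepsilon'\ll\gamma$ when invoking Lemma~\ref{frac_tiling}. A secondary subtlety is the non-smoothness of $(\max\{0,\cdot\})^2$ at zero, which is handled by the case analysis $\omega^{*}(uv)<T$ versus $\omega^{*}(uv)=T$ for pairs outside $B$; both cases produce $O(\lambda^2)$ contributions, so the first-order behaviour of $\Phi$ is governed entirely by the pairs in $B$.
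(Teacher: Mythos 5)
Your argument is correct and follows essentially the same route as the paper: fix a minimiser over the compact polytope of perfect fractional $T_k$-tilings, collect the overweight pairs into a sparse graph $B$, invoke Lemma~\ref{frac_tiling} to obtain a perfect $B$-avoiding tiling, and take a small convex combination to strictly decrease the objective. The only difference is cosmetic --- the paper minimises $\psi(\omega)=\max_{uv}\omega(uv)$ and separates the top value from the rest by a gap parameter $\mu$, whereas you minimise a smooth quadratic penalty and argue via a one-sided directional derivative at $\lambda=0$; both are valid implementations of the same idea.
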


\begin{proof}
By Lemma~\ref{frac_tiling} and the assumption that $H$ is not $\gamma$-extremal, there must exist a perfect fractional $T_k$-tiling. Furthermore, we let $W = \min_{\omega}   \psi(\omega)$, where the minimum is over all perfect fractional $T_k$-tilings $\omega$ in $H$, and $\psi(\omega) = \max_{uv \in \binom{V(H)}{2}} \omega(uv)$. Fix a perfect fractional $T_k$-tilings $\omega$ achieving $\psi(\omega) = W$. 
Then we may choose $\mu \in (0, W)$ such that for every pair $uv$, either $\omega(uv) = W$ or $\omega(uv) < W - \mu$.

Suppose for contradiction that $W \geq 1/(\varepsilon n)$. Define a graph $B$ on $V(H)$ where 
$E(B) = \{ uv : \omega(uv) = W \}$. For any vertex $u$,
\[
W \cdot d_B(u) \leq \sum_{\substack{v \in V(H) \\ v \neq u}} \omega(uv) 
= (2k-2) \omega(u) = 2k-2.
\]
Thus, $\Delta(B) \leq (2k-2)/W \leq (2k-2)\varepsilon n$.

By Lemma~\ref{frac_tiling}, there exists a perfect $B$-avoiding fractional $T_k$-tiling $\omega'$. Then $\omega'(uv) = 0$ for all $uv \in E(B)$. For each $T\in \mathcal{T}_k(H)$, we define a new fractional $T_k$-tiling $\omega''$ satisfying
$\omega''(T) = (1-\mu) \omega(T) + \mu \omega'(T)$.
Hence, for every $u\in V(H)$ we have
\[
\omega''(u) = (1-\mu)\omega(u) + \mu\omega'(u) = (1-\mu) + \mu = 1,
\]
so $\omega''$ is a perfect fractional $T_k$-tiling in $H$. For any pair $uv$, if $uv \in E(B)$,
then $\omega''(uv) = (1-\mu)W < W$.
Otherwise, 
\[
\omega''(uv)=(1-\mu)\omega(uv)+\mu\omega'(uv) < (1-\mu)(W - \mu) + \mu = W - \mu(W - \mu) < W,
\]
which follows from that $\mu(W - \mu) > 0$.
Thus $\psi(\omega'') < W$, contradicting the minimality of $W$. We conclude $W < 1/(\varepsilon n)$.
\end{proof}

Theorem~\ref{lem:tiling} follows by combining Lemma~\ref{frac_tiling_with_bound} with a result of Bowtell, Kathapurkar, Morrison and Mycroft \cite{BKMM}, which restates a special case of Pippenger and Spencer's theorem~\cite{PS}.

\begin{lemma}[\cite{BKMM}] \label{frac_to_almost}
Fix $k \geq 2$ and suppose that $1/n \ll \eps \ll \eta, 1/k$, and let $H$ be a $k$-graph on $n$ vertices. If $H$ admits a fractional matching $\omega$ in which $\omega(u) \geq 1-\eps$ for every $u \in V(H)$ and $\omega(uv) \leq \eps$ for all $u, v \in V(H)$, then $H$ admits a matching of size at least $(1-\eta) n/k$.
\end{lemma}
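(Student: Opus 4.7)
The plan is to reduce Lemma~\ref{frac_to_almost} to the classical Pippenger--Spencer theorem (in its matching form, also known as the Frankl--R\"odl nibble) by discretizing the fractional matching $\omega$. Concretely, I would fix a large integer $N$ with $n^{k}\ll \varepsilon N$ and construct an auxiliary $k$-uniform multi-hypergraph $H^\ast$ on vertex set $V(H)$ by including $N_e:=\lfloor N\omega(e)\rfloor$ parallel copies of each edge $e\in E(H)$.

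The two hypotheses on $\omega$ translate directly into the degree/codegree data needed for Pippenger--Spencer. For every vertex $u\in V(H)$,
\[
(1-\varepsilon)N-\binom{n}{k}\;\le\; d_{H^\ast}(u)\;=\;\sum_{e\ni u}N_e\;\le\;N,
\]
so $d_{H^\ast}(u)=(1\pm 2\varepsilon)N$ by the choice of $N$; and for every pair $\{u,v\}\subseteq V(H)$,
\[
d_{H^\ast}(\{u,v\})\;\le\;N\,\omega(uv)\;\le\;\varepsilon N.
\]
If desired, one can pad low-degree vertices with a small number of auxiliary edges chosen so that no pair of vertices gains more than $o(N)$ additional edges, obtaining a genuinely $(1\pm o(1))N$-regular $k$-graph $H'$ with maximum codegree at most $2\varepsilon N$; this regularization is routine.

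Next I would invoke the Pippenger--Spencer theorem: for every $k\ge 2$ and every $\eta>0$ there exists $\varepsilon_0=\varepsilon_0(\eta,k)$ such that any $k$-graph on $n$ vertices which is $(1\pm\varepsilon_0)D$-regular with maximum codegree at most $\varepsilon_0 D$ contains a matching of size at least $(1-\eta/2)n/k$. With the hierarchy $1/n\ll\varepsilon\ll\eta,1/k$ we may take $2\varepsilon\le\varepsilon_0$, so Pippenger--Spencer produces a matching $M$ in $H'$ of size $(1-\eta/2)n/k$. Distinct parallel copies of the same edge of $H$ cannot both occur in $M$, and any auxiliary padding edges can be discarded at an additional cost of at most $\eta n/(2k)$; what remains is a genuine matching in $H$ of size at least $(1-\eta)n/k$, as required.

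There is no real obstacle beyond bookkeeping: one must choose $N$ large enough that the rounding loss $\binom{n}{k}$ is dominated by $\varepsilon N$, and carry out the regularization without inflating codegrees. The substantive ingredient is Pippenger--Spencer itself, which we use as a black box; the lemma is essentially a packaging of this classical result for the weighted/fractional setting.
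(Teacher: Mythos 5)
Your derivation is correct and is exactly the intended route: the paper does not prove Lemma~\ref{frac_to_almost} itself but cites it to Bowtell, Kathapurkar, Morrison and Mycroft, explicitly describing it as a restatement of a special case of the Pippenger--Spencer theorem, which is precisely what you recover by discretizing $\omega$ into a near-regular multi-$k$-graph with small codegree and invoking Pippenger--Spencer in its large-matching (nibble) form. The only point worth flagging for tidiness is the padding remark: it is cleaner to apply the near-regular version of Pippenger--Spencer directly (degrees in $(1\pm\varepsilon_0)D$, codegrees $\le\varepsilon_0 D$), since bounding how many padding edges can enter the matching requires its own small argument; as you note, this is optional and routine, and the rest of the bookkeeping (rounding loss $\le\binom{n}{k}\ll\varepsilon N$, codegree $\le N\omega(uv)\le\varepsilon N$, discarding at most one parallel copy per edge) is sound.
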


We now complete the proof of Lemma~\ref{lem:tiling}.  

\begin{proof}[\bf Proof of Lemma~\ref{lem:tiling}]
Fix a constant $\eps$ satisfying $1/n \ll \eps \ll \eta\ll \gamma$. Suppose that $H$ is not $\gamma$-extremal, then Lemma~\ref{frac_tiling_with_bound} guarantees a perfect fractional $T_k$-tiling $\omega$ in $H$ satisfying $\omega(uv) < 1/(\eps n)$ for all distinct $u, v \in V(H)$. 

Define a $(2k-1)$-graph $H'$ on $V(H)$ where hyperedges are all sets $S \in \binom{V(H)}{2k-1}$ that support $T_k$ in $H$. Assign weights $\omega'(S) = \sum_{T \in\mathcal{T}_k(H[S])} \omega(T)$ for each such $S$. This forms a perfect fractional matching $\omega'$ in $H'$ satisfying:
\[
\sum_{\substack{S \in E(H') \\ u,v\in S}} \omega'(S) = \omega(uv) < \frac{1}{\eps n}.
\]
By Lemma~\ref{frac_to_almost}, $H'$ contains a matching $M = \{S_1,\dots,S_r\}$ of size $|M| \geq (1-\eta)n/(2k-1)$. For each $S_i \in M$, select a copy $T^i$ of $T_k$ in $H$ with $V(T^i) = S_i$. The collection $\{T^1,\dots,T^r\}$ then covers at least $(1-\eta)n$ vertices of $H$, as required.
\end{proof}

\subsection{Proof of Lemma~\ref{frac_tiling}}\label{sub2}
In this section, we present a proof of Lemma~\ref{frac_tiling}.

\begin{proof}
It suffices to prove the lemma when $(2k-1) \mid n$ (we defer the reduction to this case until the end of the proof). Assume $(2k-1) \mid n$ and define
\(
\mathcal{T}_{k,B} = \{T \in \mathcal{T}_k(H) : T \text{ is $B$-avoiding}\}.
\)
Then $H$ admits a perfect $B$-avoiding fractional $T_k$-tiling if and only if 
\(
\bm{1} \in \operatorname{cone}(\{\mathds{1}_{V(T)} : T \in \mathcal{T}_{k,B}\}).
\)

We proceed by contradiction. Suppose $H$ is not $\gamma$-extremal and lacks a perfect $B$-avoiding fractional $T_k$-tiling. Then 
$\bm{1} \notin \operatorname{cone}(\{\mathds{1}_{V(T)} : T \in \mathcal{T}_{k,B}\})$.
By Lemma~\ref{lem:farkas}, there exists $\bm{a} = (a_1, \dots, a_n) \in \mathbb{R}^n$ satisfying $\bm{a} \cdot \bm{1} < 0$ and $\bm{a} \cdot \mathds{1}_{V(T)} \ge 0$ for all $T \in \mathcal{T}_{k,B}$.
Without loss of generality, we assume $a_1 \leq \cdots \leq a_n$.

Recall our assumption that $1/n\ll \eps,\alpha\ll \gamma$. 
Take a minimal constant $\beta$ such that $\beta \geq \alpha + (2k-1)\varepsilon$ and $\beta n$ is an integer. 
We partition $V(H)$ into three disjoint families $\mathcal{V}_1, \mathcal{V}_2, \mathcal{V}_3$ of $(2k-1)$-subsets as follows: 

\begin{itemize}
    \item $\mathcal{V}_1 = \biggl\{ \bigl\{ 
        v_i,\ 
        v_{\beta n + i},\ 
        \ldots,\ 
        v_{(k-2)\beta n + i},
            v_{\frac{2k-3}{2k-1}n+\beta n+ i},
            \ldots, 
            v_{\nkterm{2k-3}{k} + i}\bigl\} : i \in [\beta n] \biggr\}$
    
    \item $\mathcal{V}_2 = \biggl\{ \bigl\{ 
        v_{\nknegterm{2k-3}{(2k-4)k+1} + i},\ 
        v_{\nknegterm{2k-3}{(2k-5)k+2} + i},\ 
        \ldots,\ 
        v_{\nknegterm{2k-3}{k-2} + i}, \\
        \hspace*{3.5cm}v_{\nkterm{2k-3}{(k+1)} + i}
        \bigr\} : i \in [(k-1)\beta n] \biggr\}$
    
    \item $\mathcal{V}_3 = \biggl\{ \bigl\{ 
        v_{(k-1)\beta n + i},\ 
        v_{\nk - \beta n + i},\ 
        \ldots,\ 
        v_{\nknegterm{2k-4}{(2k-5)k+1} + i},
            v_{\nkterm{2k-3}{2k} + i}, \\
        \hspace*{3.5cm}
            v_{\nkterm{2k-2}{k} + i}
            \bigr\} : i \in [\nk - k\beta n] \biggr\}$
\end{itemize}

In doing this, our goal is to show there exist $T^1,T^2,T^3 \in \mathcal{T}_{k, B}$ such that for each $i \in [3]$, every $S \in \mathcal{V}_i$ dominates $T^i$. In this case, we have $\mathbf{a} \cdot \mathds{1}_{V(T^i)} \leq \mathbf{a} \cdot \mathds{1}_S$, and it follows from $\sum_{i=1}^3 \sum_{S \in \mathcal{V}_i} \mathds{1}_S={\bf 1}$ that
\begin{equation*}
\begin{aligned}
0&>\mathbf{a}\cdot \mathbf{1}=\mathbf{a}\cdot \bigg(\sum_{i=1}^3 \sum_{S \in \mathcal{V}_i} \mathds{1}_S\bigg)\\
&\ge \beta n(\mathbf{a}\cdot \mathds{1}_{V(T^1)})+(k-1)\beta n(\mathbf{a}\cdot \mathds{1}_{V(T^2)})+\bigg(\frac{n}{2k-1}-k\beta n\bigg)(\mathbf{a}\cdot \mathds{1}_{V(T^3)})\\
&\ge 0,    
\end{aligned}
\end{equation*}
a contradiction.
Hence, $H$ is either $\gamma$-extremal or admits a perfect $B$-avoiding fractional $T_k$-tiling.

Our next goal is to find $T^1$, $T^2$ and $T^3$ satisfying the aforementioned conditions. Note that $T^i \in \mathcal{T}_{k,B}$ means $V(T^i)$ induces no edges in $B$ for each $i\in [3]$. Hence, $V(T^i)$ would form a complete graph in the complement graph $\bar{B}$.

We begin by establishing the existence of $T^1$. Let $G=\bar{B}$.
Fix $v_1$ to be the first vertex of $T^1$.
Sequentially select vertices $\{v_{i_2}, \dots, v_{i_{k-1}}\}$ with indices $2 \leq i_2 < \cdots < i_{k-1} \leq \bconst n$ such that $\{v_1, v_{i_2}, \dots, v_{i_{k-1}}\}$ forms a clique in $\Gind$. This selection is feasible as each vertex has at most $\eps n$ non-neighbors in $\Gind$ and $\bconst \geq \alpha + (2k-1)\econst$.
We now sequentially select vertices $v_s$ and $v_t$ such that 
$s, t \leq \left( \frac{2k-3}{2k-1} + \alpha + (2k-1)\varepsilon \right)n$, $v_s, v_t \in N(v_1 v_{i_2} \dots v_{i_{k-1}})$ and $\{v_1, v_{i_2}, \dots, v_{i_{k-1}}, v_s, v_t\}$ induces a $(k+1)$-clique in $G$. The selection of $v_s$ and $v_t$ is valid by the same reason as the choice of vertices $\{v_{i_2}, \dots, v_{i_{k-1}}\}$, combined with the fact that \(d_H(v_1v_{i_2}\dots v_{i_{k-1}}) \geq \delta(H) \geq \frac{2n}{2k-1} - \alpha n.\). Then we select vertices $v_{j_1}, \dots, v_{j_{k-3}}$ satisfying $j_1 \leq \cdots \leq j_{k-3} \leq \frac{2k-3}{2k-1}n + \beta n$.
Similarly, there exists $m \leq \frac{2k-3}{2k-1}n + \beta n$ with $v_m \in N(v_s v_t v_{j_1} \dots v_{j_{k-3}})$ such that  
$\{v_1, v_{i_2},\dots, v_{i_{k-1}}, v_s, v_t, v_{j_1},\dots,v_{j_{k-3}}, v_m\}$ forms a $(2k-1)$-clique in $G$.
Thus, we obtain a $B$-avoiding $T^1 \in \mathcal{T}_{k,B}$ that is indeed dominated by every set in $\mathcal{V}_1$. As illustrated in Figure~\ref{fig:T1}, distinct edges are represented using differently colored curves.

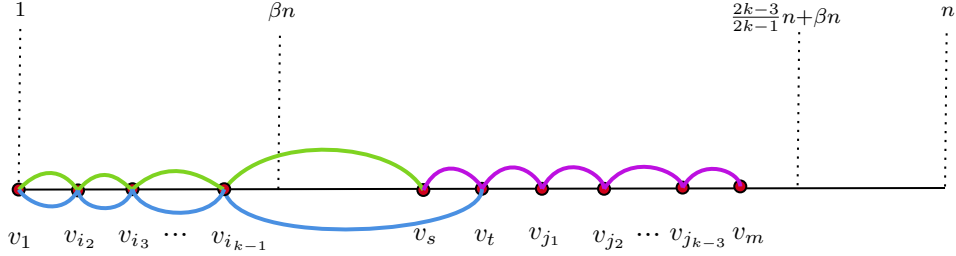
\begin{figure}[htbp]
\centering

\tikzset{every picture/.style={line width=0.75pt}} 

\begin{tikzpicture}[x=0.75pt,y=0.75pt,yscale=-1,xscale=1]

\draw    (47.8,341.6) -- (510.2,340.8) ;
\draw  [fill={rgb, 255:red, 208; green, 2; blue, 27 }  ,fill opacity=1 ] (44.8,341.6) .. controls (44.8,339.94) and (46.14,338.6) .. (47.8,338.6) .. controls (49.46,338.6) and (50.8,339.94) .. (50.8,341.6) .. controls (50.8,343.26) and (49.46,344.6) .. (47.8,344.6) .. controls (46.14,344.6) and (44.8,343.26) .. (44.8,341.6) -- cycle ;
\draw  [fill={rgb, 255:red, 208; green, 2; blue, 27 }  ,fill opacity=1 ] (74.4,342) .. controls (74.4,340.34) and (75.74,339) .. (77.4,339) .. controls (79.06,339) and (80.4,340.34) .. (80.4,342) .. controls (80.4,343.66) and (79.06,345) .. (77.4,345) .. controls (75.74,345) and (74.4,343.66) .. (74.4,342) -- cycle ;
\draw  [fill={rgb, 255:red, 208; green, 2; blue, 27 }  ,fill opacity=1 ] (101.6,341.4) .. controls (101.6,339.74) and (102.94,338.4) .. (104.6,338.4) .. controls (106.26,338.4) and (107.6,339.74) .. (107.6,341.4) .. controls (107.6,343.06) and (106.26,344.4) .. (104.6,344.4) .. controls (102.94,344.4) and (101.6,343.06) .. (101.6,341.4) -- cycle ;
\draw  [fill={rgb, 255:red, 208; green, 2; blue, 27 }  ,fill opacity=1 ] (246.8,341.8) .. controls (246.8,340.14) and (248.14,338.8) .. (249.8,338.8) .. controls (251.46,338.8) and (252.8,340.14) .. (252.8,341.8) .. controls (252.8,343.46) and (251.46,344.8) .. (249.8,344.8) .. controls (248.14,344.8) and (246.8,343.46) .. (246.8,341.8) -- cycle ;
\draw  [fill={rgb, 255:red, 208; green, 2; blue, 27 }  ,fill opacity=1 ] (276,341.2) .. controls (276,339.54) and (277.34,338.2) .. (279,338.2) .. controls (280.66,338.2) and (282,339.54) .. (282,341.2) .. controls (282,342.86) and (280.66,344.2) .. (279,344.2) .. controls (277.34,344.2) and (276,342.86) .. (276,341.2) -- cycle ;
\draw  [fill={rgb, 255:red, 208; green, 2; blue, 27 }  ,fill opacity=1 ] (306,341.2) .. controls (306,339.54) and (307.34,338.2) .. (309,338.2) .. controls (310.66,338.2) and (312,339.54) .. (312,341.2) .. controls (312,342.86) and (310.66,344.2) .. (309,344.2) .. controls (307.34,344.2) and (306,342.86) .. (306,341.2) -- cycle ;
\draw  [fill={rgb, 255:red, 208; green, 2; blue, 27 }  ,fill opacity=1 ] (337,341.2) .. controls (337,339.54) and (338.34,338.2) .. (340,338.2) .. controls (341.66,338.2) and (343,339.54) .. (343,341.2) .. controls (343,342.86) and (341.66,344.2) .. (340,344.2) .. controls (338.34,344.2) and (337,342.86) .. (337,341.2) -- cycle ;
\draw [line width=0.75]  [dash pattern={on 0.84pt off 2.51pt}]  (48.6,260.8) -- (47.8,338.6) ;
\draw  [dash pattern={on 0.84pt off 2.51pt}]  (178.2,264.2) -- (177.4,342) ;
\draw  [dash pattern={on 0.84pt off 2.51pt}]  (437.4,262.4) -- (436.6,340.2) ;
\draw  [dash pattern={on 0.84pt off 2.51pt}]  (511,263) -- (510.2,340.8) ;
\draw [color={rgb, 255:red, 126; green, 211; blue, 33 }  ,draw opacity=1 ][line width=1.5]    (47.8,341.6) .. controls (66.2,322.4) and (77.4,342.4) .. (77.4,342) ;
\draw [color={rgb, 255:red, 126; green, 211; blue, 33 }  ,draw opacity=1 ][line width=1.5]    (77.4,342) .. controls (91.8,324.8) and (102.2,341.6) .. (104.6,341.4) ;
\draw [color={rgb, 255:red, 126; green, 211; blue, 33 }  ,draw opacity=1 ][line width=1.5]    (150.4,341.4) .. controls (171.8,312.8) and (235.8,317) .. (249.8,341.8) ;
\draw [color={rgb, 255:red, 74; green, 144; blue, 226 }  ,draw opacity=1 ][line width=1.5]    (47.8,341.6) .. controls (52.6,349) and (69.4,356) .. (77.4,342) ;
\draw [color={rgb, 255:red, 74; green, 144; blue, 226 }  ,draw opacity=1 ][line width=1.5]    (77.4,342) .. controls (79,352.8) and (100.6,355.2) .. (104.6,341.4) ;
\draw [color={rgb, 255:red, 74; green, 144; blue, 226 }  ,draw opacity=1 ][line width=1.5]    (150.4,341.4) .. controls (155.6,371.77) and (279.8,364.8) .. (279,341.2) ;
\draw [color={rgb, 255:red, 189; green, 16; blue, 224 }  ,draw opacity=1 ][line width=1.5]    (249.8,341.8) .. controls (260.6,320) and (278.6,337.37) .. (279,341.2) ;
\draw [color={rgb, 255:red, 189; green, 16; blue, 224 }  ,draw opacity=1 ][line width=1.5]    (279,341.2) .. controls (292.6,320) and (308.2,335.77) .. (309,341.2) ;
\draw [color={rgb, 255:red, 189; green, 16; blue, 224 }  ,draw opacity=1 ][line width=1.5]    (309,341.2) .. controls (319.8,320.8) and (340.2,334.17) .. (340,341.2) ;
\draw  [fill={rgb, 255:red, 208; green, 2; blue, 27 }  ,fill opacity=1 ] (147.4,341.4) .. controls (147.4,339.74) and (148.74,338.4) .. (150.4,338.4) .. controls (152.06,338.4) and (153.4,339.74) .. (153.4,341.4) .. controls (153.4,343.06) and (152.06,344.4) .. (150.4,344.4) .. controls (148.74,344.4) and (147.4,343.06) .. (147.4,341.4) -- cycle ;
\draw [color={rgb, 255:red, 126; green, 211; blue, 33 }  ,draw opacity=1 ][line width=1.5]    (104.6,341.4) .. controls (127,320.8) and (148,341.6) .. (150.4,341.4) ;
\draw [color={rgb, 255:red, 74; green, 144; blue, 226 }  ,draw opacity=1 ][line width=1.5]    (104.6,341.4) .. controls (108.6,356) and (144.6,358.4) .. (150.4,341.4) ;
\draw  [fill={rgb, 255:red, 208; green, 2; blue, 27 }  ,fill opacity=1 ] (376.2,340.6) .. controls (376.2,338.94) and (377.54,337.6) .. (379.2,337.6) .. controls (380.86,337.6) and (382.2,338.94) .. (382.2,340.6) .. controls (382.2,342.26) and (380.86,343.6) .. (379.2,343.6) .. controls (377.54,343.6) and (376.2,342.26) .. (376.2,340.6) -- cycle ;
\draw [color={rgb, 255:red, 189; green, 16; blue, 224 }  ,draw opacity=1 ][line width=1.5]    (340,341.2) .. controls (353.4,320.8) and (379.4,333.57) .. (379.2,340.6) ;
\draw  [fill={rgb, 255:red, 208; green, 2; blue, 27 }  ,fill opacity=1 ] (405,340) .. controls (405,338.34) and (406.34,337) .. (408,337) .. controls (409.66,337) and (411,338.34) .. (411,340) .. controls (411,341.66) and (409.66,343) .. (408,343) .. controls (406.34,343) and (405,341.66) .. (405,340) -- cycle ;
\draw [color={rgb, 255:red, 189; green, 16; blue, 224 }  ,draw opacity=1 ][line width=1.5]    (379.2,340.6) .. controls (388.6,324) and (408.2,332.97) .. (408,340) ;

\draw (170.8,246) node [anchor=north west][inner sep=0.75pt]  [font=\normalsize] [align=left] {$\displaystyle _{\beta n}$};
\draw (44.4,246) node [anchor=north west][inner sep=0.75pt]  [font=\normalsize] [align=left] {$\displaystyle _{1}$};
\draw (401.2,246) node [anchor=north west][inner sep=0.75pt]  [font=\normalsize] [align=left] {$\displaystyle _{\frac{2k-3}{2k-1} n+\beta n}$};
\draw (506.8,248) node [anchor=north west][inner sep=0.75pt]  [font=\normalsize] [align=left] {$\displaystyle _{n}$};
\draw (40.8,361.6) node [anchor=north west][inner sep=0.75pt]  [font=\footnotesize] [align=left] {$\displaystyle v_{1}$};
\draw (68.8,360.6) node [anchor=north west][inner sep=0.75pt]  [font=\footnotesize] [align=left] {$\displaystyle v_{i}{}_{_{2}}$};
\draw (96,360.6) node [anchor=north west][inner sep=0.75pt]  [font=\footnotesize] [align=left] {$\displaystyle v_{i}{}_{_{3}}$};
\draw (243.2,359.8) node [anchor=north west][inner sep=0.75pt]  [font=\footnotesize] [align=left] {$\displaystyle v_{s}$};
\draw (273.6,360.2) node [anchor=north west][inner sep=0.75pt]  [font=\footnotesize] [align=left] {$\displaystyle v_{t}$};
\draw (300.8,359) node [anchor=north west][inner sep=0.75pt]  [font=\footnotesize] [align=left] {$\displaystyle v_{j_{1}}$};
\draw (332.8,360.6) node [anchor=north west][inner sep=0.75pt]  [font=\footnotesize] [align=left] {$\displaystyle v_{j_{2}}$};
\draw (142.4,361.4) node [anchor=north west][inner sep=0.75pt]  [font=\footnotesize] [align=left] {$\displaystyle v_{i}{}_{_{k-1}}$};
\draw (118,362) node [anchor=north west][inner sep=0.75pt]   [align=left] {...};
\draw (371.2,359.8) node [anchor=north west][inner sep=0.75pt]  [font=\footnotesize] [align=left] {$\displaystyle v_{j_{k-3}}$};
\draw (354,362) node [anchor=north west][inner sep=0.75pt]   [align=left] {...};
\draw (402.4,359.4) node [anchor=north west][inner sep=0.75pt]  [font=\footnotesize] [align=left] {$\displaystyle v_{m}$};

\end{tikzpicture}

\caption{One possible configuration of $T^1$}
\label{fig:T1}
\end{figure}

Next, we establish the existence of $T^2$. Define 
\[
S = \left\{v_1, \dots, v_{\frac{2k-3}{2k-1}n - ((2k-4)k+1)\beta n }\right\}
\quad \text{and} \quad 
S' = \left\{v_1, \dots, v_{\frac{2k-3}{2k-1}n}\right\}.
\]
We claim that there exists a $(k-1)$-clique $v_{x_1},v_{x_2},\dots,v_{x_{k-1}}$ in $G[S]$ with $|N(v_{x_1}\dots v_{x_{k-1}}) \cap S| > (k+1)\varepsilon n$. Suppose not, then since there are at most $(k-2)\varepsilon n \binom{|S|}{k-2}$ non-clique $(k-1)$-sets in $G[S]$, it follows that
\begin{equation*}
\begin{aligned}
e(H[S']) & \leq (\varepsilon n)(k-2)\binom{|S|}{k-2}|S| + \binom{|S|}{k-1}(k+1)\varepsilon n + |S'\setminus S|\binom{|S'|}{k-1} \\
& \leq \gamma \binom{(2k-3)n/(2k-1)}{k},
\end{aligned}
\end{equation*}
and so $S'$ witnesses that $H$ is $\gamma$-extremal, a contrary to our earlier assumption. In this case, let $v_{x_1},v_{x_2},\dots,v_{x_{k-1}}$ be a $(k-1)$-clique in $G[S]$ with $|N(v_{x_1}\dots v_{x_{k-1}}) \cap S| > (k+1)\varepsilon n$. We sequentially select distinct $v_z,v_w \in S$ such that:
$\{v_{x_1},\dots, v_{x_{k-1}},v_z\}$ and $\{v_{x_1},\dots, v_{x_{k-1}},v_w\}$ are hyperedges in $H$ and $v_{x_1},\dots,v_{x_{k-1}},v_z,v_w$ forms a $(k+1)$-clique in $G$.
Next, choose $v_{y_1},\dots,v_{y_{k-3}} \in S$. We can pick an integer $s \leq \frac{2k-3}{2k-1}n + \beta n$ satisfying $\{v_z,v_w,v_{y_1},\dots, v_{y_{k-3}},v_s\}$ is a hyperedge in $H$ and $v_{x_1},\dots,v_{x_{k-1}},v_z,v_w,v_{y_1},\dots,v_{y_{k-3}},v_s$ forms a $(2k-1)$-clique in $G$.
Thus, we obtain a $B$-avoiding $T^2 \in \mathcal{T}_{k,B}$
whose vertex set is $\{v_{x_1},\dots,v_{x_{k-1}},v_{z},v_{w},v_{y_1},\dots,v_{y_{k-3}},v_s\}$ and edge set is 
\[\{v_{x_1}\dots v_{x_{k-1}}v_{z},v_{x_1}\dots v_{x_{k-1}}v_{w},v_{z}v_{w}v_{y_1}\dots v_{y_{k-3}}v_s\}.\] 
It is dominated by every set in $\mathcal{V}_2$, see Figure~\ref{fig:T2}.

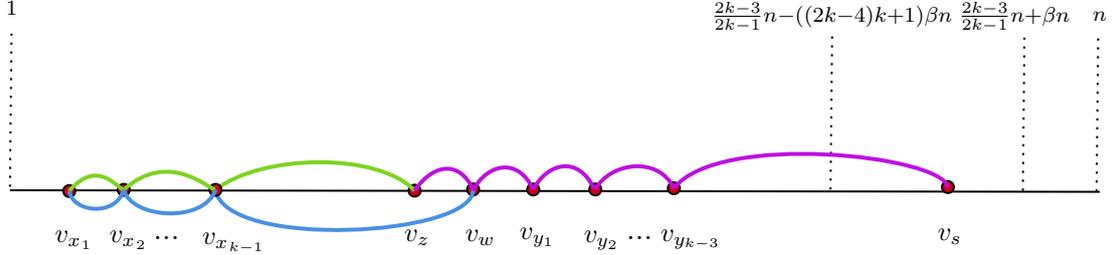
\begin{figure}[htbp]
\centering
\tikzset{every picture/.style={line width=0.75pt}} 

\begin{tikzpicture}[x=0.75pt,y=0.75pt,yscale=-1,xscale=1]

\draw    (79,270.4) -- (623,271.2) ;
\draw  [fill={rgb, 255:red, 208; green, 2; blue, 27 }  ,fill opacity=1 ] (105.6,270.8) .. controls (105.6,269.14) and (106.94,267.8) .. (108.6,267.8) .. controls (110.26,267.8) and (111.6,269.14) .. (111.6,270.8) .. controls (111.6,272.46) and (110.26,273.8) .. (108.6,273.8) .. controls (106.94,273.8) and (105.6,272.46) .. (105.6,270.8) -- cycle ;
\draw  [fill={rgb, 255:red, 208; green, 2; blue, 27 }  ,fill opacity=1 ] (132.8,270.2) .. controls (132.8,268.54) and (134.14,267.2) .. (135.8,267.2) .. controls (137.46,267.2) and (138.8,268.54) .. (138.8,270.2) .. controls (138.8,271.86) and (137.46,273.2) .. (135.8,273.2) .. controls (134.14,273.2) and (132.8,271.86) .. (132.8,270.2) -- cycle ;
\draw  [fill={rgb, 255:red, 208; green, 2; blue, 27 }  ,fill opacity=1 ] (278,270.6) .. controls (278,268.94) and (279.34,267.6) .. (281,267.6) .. controls (282.66,267.6) and (284,268.94) .. (284,270.6) .. controls (284,272.26) and (282.66,273.6) .. (281,273.6) .. controls (279.34,273.6) and (278,272.26) .. (278,270.6) -- cycle ;
\draw  [fill={rgb, 255:red, 208; green, 2; blue, 27 }  ,fill opacity=1 ] (307.2,270) .. controls (307.2,268.34) and (308.54,267) .. (310.2,267) .. controls (311.86,267) and (313.2,268.34) .. (313.2,270) .. controls (313.2,271.66) and (311.86,273) .. (310.2,273) .. controls (308.54,273) and (307.2,271.66) .. (307.2,270) -- cycle ;
\draw  [fill={rgb, 255:red, 208; green, 2; blue, 27 }  ,fill opacity=1 ] (337.2,270) .. controls (337.2,268.34) and (338.54,267) .. (340.2,267) .. controls (341.86,267) and (343.2,268.34) .. (343.2,270) .. controls (343.2,271.66) and (341.86,273) .. (340.2,273) .. controls (338.54,273) and (337.2,271.66) .. (337.2,270) -- cycle ;
\draw  [fill={rgb, 255:red, 208; green, 2; blue, 27 }  ,fill opacity=1 ] (368.2,270) .. controls (368.2,268.34) and (369.54,267) .. (371.2,267) .. controls (372.86,267) and (374.2,268.34) .. (374.2,270) .. controls (374.2,271.66) and (372.86,273) .. (371.2,273) .. controls (369.54,273) and (368.2,271.66) .. (368.2,270) -- cycle ;
\draw [line width=0.75]  [dash pattern={on 0.84pt off 2.51pt}]  (79.8,191.6) -- (79,269.4) ;
\draw  [dash pattern={on 0.84pt off 2.51pt}]  (489.4,193) -- (488.6,270.8) ;
\draw  [dash pattern={on 0.84pt off 2.51pt}]  (585.4,193.6) -- (584.6,271.4) ;
\draw  [dash pattern={on 0.84pt off 2.51pt}]  (622.2,194) -- (621.4,271.8) ;
\draw [color={rgb, 255:red, 126; green, 211; blue, 33 }  ,draw opacity=1 ][line width=1.5]    (108.6,270.8) .. controls (123,253.6) and (133.4,270.4) .. (135.8,270.2) ;
\draw [color={rgb, 255:red, 126; green, 211; blue, 33 }  ,draw opacity=1 ][line width=1.5]    (181.6,270.2) .. controls (204.6,252) and (263,251.2) .. (281,270.6) ;
\draw [color={rgb, 255:red, 74; green, 144; blue, 226 }  ,draw opacity=1 ][line width=1.5]    (108.6,270.8) .. controls (110.2,281.6) and (131.8,284) .. (135.8,270.2) ;
\draw [color={rgb, 255:red, 74; green, 144; blue, 226 }  ,draw opacity=1 ][line width=1.5]    (181.6,270.2) .. controls (186.8,300.57) and (311,293.6) .. (310.2,270) ;
\draw  [fill={rgb, 255:red, 208; green, 2; blue, 27 }  ,fill opacity=1 ] (178.6,270.2) .. controls (178.6,268.54) and (179.94,267.2) .. (181.6,267.2) .. controls (183.26,267.2) and (184.6,268.54) .. (184.6,270.2) .. controls (184.6,271.86) and (183.26,273.2) .. (181.6,273.2) .. controls (179.94,273.2) and (178.6,271.86) .. (178.6,270.2) -- cycle ;
\draw [color={rgb, 255:red, 126; green, 211; blue, 33 }  ,draw opacity=1 ][line width=1.5]    (135.8,270.2) .. controls (158.2,249.6) and (179.2,270.4) .. (181.6,270.2) ;
\draw [color={rgb, 255:red, 74; green, 144; blue, 226 }  ,draw opacity=1 ][line width=1.5]    (135.8,270.2) .. controls (139.8,284.8) and (175.8,287.2) .. (181.6,270.2) ;
\draw  [fill={rgb, 255:red, 208; green, 2; blue, 27 }  ,fill opacity=1 ] (407.4,269.4) .. controls (407.4,267.74) and (408.74,266.4) .. (410.4,266.4) .. controls (412.06,266.4) and (413.4,267.74) .. (413.4,269.4) .. controls (413.4,271.06) and (412.06,272.4) .. (410.4,272.4) .. controls (408.74,272.4) and (407.4,271.06) .. (407.4,269.4) -- cycle ;
\draw  [fill={rgb, 255:red, 208; green, 2; blue, 27 }  ,fill opacity=1 ] (544.2,268.8) .. controls (544.2,267.14) and (545.54,265.8) .. (547.2,265.8) .. controls (548.86,265.8) and (550.2,267.14) .. (550.2,268.8) .. controls (550.2,270.46) and (548.86,271.8) .. (547.2,271.8) .. controls (545.54,271.8) and (544.2,270.46) .. (544.2,268.8) -- cycle ;
\draw [color={rgb, 255:red, 189; green, 16; blue, 224 }  ,draw opacity=1 ][line width=1.5]    (281,270.6) .. controls (291,256) and (303.8,256) .. (310.2,270) ;
\draw [color={rgb, 255:red, 189; green, 16; blue, 224 }  ,draw opacity=1 ][line width=1.5]    (310,270.6) .. controls (317.4,256.8) and (332.6,253.6) .. (340.2,270) ;
\draw [color={rgb, 255:red, 189; green, 16; blue, 224 }  ,draw opacity=1 ][line width=1.5]    (340.2,270) .. controls (347.6,256.2) and (362.8,253) .. (370.4,269.4) ;
\draw [color={rgb, 255:red, 189; green, 16; blue, 224 }  ,draw opacity=1 ][line width=1.5]    (371.2,270) .. controls (378.6,256.2) and (402.8,253) .. (410.4,269.4) ;
\draw [color={rgb, 255:red, 189; green, 16; blue, 224 }  ,draw opacity=1 ][line width=1.5]    (410.4,269.4) .. controls (432.6,240.8) and (543.8,252.8) .. (547.2,268.8) ;

\draw (75.6,174) node [anchor=north west][inner sep=0.75pt]  [font=\normalsize] [align=left] {$\displaystyle _{1}$};
\draw (551,174) node [anchor=north west][inner sep=0.75pt]  [font=\normalsize] [align=left] {$\displaystyle _{\frac{2k-3}{2k-1} n+\beta n}$};
\draw (618,179) node [anchor=north west][inner sep=0.75pt]  [font=\normalsize] [align=left] {$\displaystyle _{n}$};
\draw (100,289.4) node [anchor=north west][inner sep=0.75pt]  [font=\footnotesize] [align=left] {$\displaystyle v_{x}{}_{_{1}}$};
\draw (127.2,289.4) node [anchor=north west][inner sep=0.75pt]  [font=\footnotesize] [align=left] {$\displaystyle v_{x}{}_{_{2}}$};
\draw (274.4,288.6) node [anchor=north west][inner sep=0.75pt]  [font=\footnotesize] [align=left] {$\displaystyle v_{z}$};
\draw (304.8,289) node [anchor=north west][inner sep=0.75pt]  [font=\footnotesize] [align=left] {$\displaystyle v_{w}$};
\draw (332,287.8) node [anchor=north west][inner sep=0.75pt]  [font=\footnotesize] [align=left] {$\displaystyle v_{y_{1}}$};
\draw (364,289.4) node [anchor=north west][inner sep=0.75pt]  [font=\footnotesize] [align=left] {$\displaystyle v_{y_{2}}$};
\draw (173.6,290.2) node [anchor=north west][inner sep=0.75pt]  [font=\footnotesize] [align=left] {$\displaystyle v_{x}{}_{_{k-1}}$};
\draw (149.2,292) node [anchor=north west][inner sep=0.75pt]   [align=left] {...};
\draw (402.4,288.6) node [anchor=north west][inner sep=0.75pt]  [font=\footnotesize] [align=left] {$\displaystyle v_{y_{k-3}}$};
\draw (385.2,292) node [anchor=north west][inner sep=0.75pt]   [align=left] {...};
\draw (540.6,288.2) node [anchor=north west][inner sep=0.75pt]  [font=\footnotesize] [align=left] {$\displaystyle v_{s}$};
\draw (427.6,174) node [anchor=north west][inner sep=0.75pt]  [font=\normalsize] [align=left] {$\displaystyle _{\frac{2k-3}{2k-1} n-(( 2k-4) k+1) \beta n}$};

\end{tikzpicture}

\caption{One possible configuration of $T^2$}
\label{fig:T2}
\end{figure}

Finally, we establish the existence of $T_3$. Define:
\[
\begin{aligned}
U &= \left\{v_i : i \leq (2k-1)\varepsilon n\right\}, \\
W &= \left\{v_i : i \leq  \frac{k-1}{2k-1}n - ((k-2)k+1)\beta n  \right\}, \\
W' &= \left\{v_i :  \frac{k-1}{2k-1}n - ((k-2)k+1)\beta n  \!+\! 1 \leq i \leq \frac{2k-2}{2k-1}n \right\}.
\end{aligned}
\]
Observe that $G[U]$ contains a $k$-clique $K$. Let $V(K) = \{v_{x_1},v_{x_2},\dots,v_{x_k}\}$. Next we consider two cases.

\textbf{Case 1.} Suppose there exist $v_{x_1},v_{x_2},\dots,v_{x_{k-1}}\in V(K)$ with $|N(v_{x_1}v_{x_2}\dots v_{x_{k-1}})\cap W|\geq k\varepsilon n$. 
Then we can choose $v_z \in W$ such that $\{v_{x_1},v_{x_2},\dots,v_{x_{k-1}}\} \cup \{v_z\} \in E(H)$ and $v_z$ is adjacent to all vertices $\{v_{x_1},v_{x_2},\dots,v_{x_{k-1}}\}$ in $G$. 
Now choose $v_i\in N(v_{x_1}\dots v_{x_{k-1}})$ such that $i\le \frac{2k-3}{2k-1}n + \beta n$ and $v_{x_1},\dots,v_{x_{k-1}},v_z,v_i$ forms a $(k+1)$-clique in $G$. 
Select $k-3$ vertices $v_{i_1},\dots ,v_{i_{k-3}}$ with indices $i_1\le i_2\le \cdots \le i_{k-3}\le \frac{k-1}{2k-1}n - ((k-2)k+1)\beta n$. 
Finally, we choose $v_j$ with $j \leq \frac{2k-3}{2k-1}n + \beta n$ such that $\{v_z, v_{i}, v_{i_1},\dots,v_{i_{k-3}}, v_j\} \in E(H)$ (i.e., $v_j \in N(v_z v_{i} v_{i_1}\dots v_{i_{k-3}})$), and $v_{x_1},\dots,v_{x_{k-1}},v_{i}, v_z, v_{i_1},\dots,v_{i_{k-3}}, v_j$ forms a $(2k-1)$-clique in $G$.  
Thus, we obtain a $B$-avoiding $T^3 \in \mathcal{T}_{k,B}$
whose vertex set is 
\[
\{v_{x_1},\dots,v_{x_{k-1}},v_z, v_{i}, v_{i_1},\dots,v_{i_{k-3}}, v_j\}
\]
and edge set is 
\(
\{v_{x_1}\dots v_{x_{k-1}}v_{z},v_{x_1}\dots v_{x_{k-1}}v_i,v_zv_{i}v_{i_1}\dots v_{i_{k-3}}v_j\}.
\)
It is dominated by every set in $\mathcal{V}_3$, see Figure~\ref{fig:T3-1}.
\begin{figure}
\centering

\tikzset{every picture/.style={line width=0.75pt}} 

\begin{tikzpicture}[x=0.75pt,y=0.75pt,yscale=-1,xscale=1]

\draw    (79,270.4) -- (623,271.2) ;
\draw  [fill={rgb, 255:red, 208; green, 2; blue, 27 }  ,fill opacity=1 ] (105.6,270.8) .. controls (105.6,269.14) and (106.94,267.8) .. (108.6,267.8) .. controls (110.26,267.8) and (111.6,269.14) .. (111.6,270.8) .. controls (111.6,272.46) and (110.26,273.8) .. (108.6,273.8) .. controls (106.94,273.8) and (105.6,272.46) .. (105.6,270.8) -- cycle ;
\draw  [fill={rgb, 255:red, 208; green, 2; blue, 27 }  ,fill opacity=1 ] (132.8,270.2) .. controls (132.8,268.54) and (134.14,267.2) .. (135.8,267.2) .. controls (137.46,267.2) and (138.8,268.54) .. (138.8,270.2) .. controls (138.8,271.86) and (137.46,273.2) .. (135.8,273.2) .. controls (134.14,273.2) and (132.8,271.86) .. (132.8,270.2) -- cycle ;
\draw  [fill={rgb, 255:red, 208; green, 2; blue, 27 }  ,fill opacity=1 ] (307.2,270) .. controls (307.2,268.34) and (308.54,267) .. (310.2,267) .. controls (311.86,267) and (313.2,268.34) .. (313.2,270) .. controls (313.2,271.66) and (311.86,273) .. (310.2,273) .. controls (308.54,273) and (307.2,271.66) .. (307.2,270) -- cycle ;
\draw  [fill={rgb, 255:red, 208; green, 2; blue, 27 }  ,fill opacity=1 ] (337.2,270) .. controls (337.2,268.34) and (338.54,267) .. (340.2,267) .. controls (341.86,267) and (343.2,268.34) .. (343.2,270) .. controls (343.2,271.66) and (341.86,273) .. (340.2,273) .. controls (338.54,273) and (337.2,271.66) .. (337.2,270) -- cycle ;
\draw  [fill={rgb, 255:red, 208; green, 2; blue, 27 }  ,fill opacity=1 ] (368.2,270) .. controls (368.2,268.34) and (369.54,267) .. (371.2,267) .. controls (372.86,267) and (374.2,268.34) .. (374.2,270) .. controls (374.2,271.66) and (372.86,273) .. (371.2,273) .. controls (369.54,273) and (368.2,271.66) .. (368.2,270) -- cycle ;
\draw [line width=0.75]  [dash pattern={on 0.84pt off 2.51pt}]  (79.8,191.6) -- (79,269.4) ;
\draw  [dash pattern={on 0.84pt off 2.51pt}]  (427.8,193) -- (427,270.8) ;
\draw  [dash pattern={on 0.84pt off 2.51pt}]  (588.6,193.6) -- (587.8,271.4) ;
\draw  [dash pattern={on 0.84pt off 2.51pt}]  (622.2,194) -- (621.4,271.8) ;
\draw [color={rgb, 255:red, 126; green, 211; blue, 33 }  ,draw opacity=1 ][line width=1.5]    (108.6,270.8) .. controls (123,253.6) and (133.4,270.4) .. (135.8,270.2) ;
\draw [color={rgb, 255:red, 74; green, 144; blue, 226 }  ,draw opacity=1 ][line width=1.5]    (181.6,270.2) .. controls (188.09,288) and (502.49,297.6) .. (547.2,269.8) ;
\draw  [fill={rgb, 255:red, 208; green, 2; blue, 27 }  ,fill opacity=1 ] (178.6,270.2) .. controls (178.6,268.54) and (179.94,267.2) .. (181.6,267.2) .. controls (183.26,267.2) and (184.6,268.54) .. (184.6,270.2) .. controls (184.6,271.86) and (183.26,273.2) .. (181.6,273.2) .. controls (179.94,273.2) and (178.6,271.86) .. (178.6,270.2) -- cycle ;
\draw [color={rgb, 255:red, 126; green, 211; blue, 33 }  ,draw opacity=1 ][line width=1.5]    (135.8,270.2) .. controls (158.2,249.6) and (179.2,270.4) .. (181.6,270.2) ;
\draw [color={rgb, 255:red, 74; green, 144; blue, 226 }  ,draw opacity=1 ][line width=1.5]    (135.8,270.2) .. controls (139.8,284.8) and (175.8,287.2) .. (181.6,270.2) ;
\draw  [fill={rgb, 255:red, 208; green, 2; blue, 27 }  ,fill opacity=1 ] (407.4,269.4) .. controls (407.4,267.74) and (408.74,266.4) .. (410.4,266.4) .. controls (412.06,266.4) and (413.4,267.74) .. (413.4,269.4) .. controls (413.4,271.06) and (412.06,272.4) .. (410.4,272.4) .. controls (408.74,272.4) and (407.4,271.06) .. (407.4,269.4) -- cycle ;
\draw  [fill={rgb, 255:red, 208; green, 2; blue, 27 }  ,fill opacity=1 ] (544.2,269.8) .. controls (544.2,268.14) and (545.54,266.8) .. (547.2,266.8) .. controls (548.86,266.8) and (550.2,268.14) .. (550.2,269.8) .. controls (550.2,271.46) and (548.86,272.8) .. (547.2,272.8) .. controls (545.54,272.8) and (544.2,271.46) .. (544.2,269.8) -- cycle ;
\draw [color={rgb, 255:red, 189; green, 16; blue, 224 }  ,draw opacity=1 ][line width=1.5]    (310,270.6) .. controls (317.4,256.8) and (332.6,253.6) .. (340.2,270) ;
\draw [color={rgb, 255:red, 189; green, 16; blue, 224 }  ,draw opacity=1 ][line width=1.5]    (340.2,270) .. controls (347.6,256.2) and (362.8,253) .. (370.4,269.4) ;
\draw [color={rgb, 255:red, 189; green, 16; blue, 224 }  ,draw opacity=1 ][line width=1.5]    (371.2,270) .. controls (378.6,256.2) and (402.8,253) .. (410.4,269.4) ;
\draw [color={rgb, 255:red, 189; green, 16; blue, 224 }  ,draw opacity=1 ][line width=1.5]    (410.4,269.4) .. controls (424.89,250.4) and (543.8,253.8) .. (547.2,269.8) ;
\draw [color={rgb, 255:red, 126; green, 211; blue, 33 }  ,draw opacity=1 ][line width=1.5]    (181.6,270.2) .. controls (200.6,253.6) and (282.49,254.4) .. (310.2,270) ;
\draw [line width=0.75]  [dash pattern={on 0.84pt off 2.51pt}]  (231.8,192.8) -- (231,270.6) ;
\draw  [dash pattern={on 0.84pt off 2.51pt}]  (82.49,217.6) -- (148.09,217.6) ;
\draw  [dash pattern={on 0.84pt off 2.51pt}]  (158.49,217.6) -- (231.29,217.6) ;
\draw  [dash pattern={on 0.84pt off 2.51pt}]  (80.89,159.2) -- (224.09,159.2) ;
\draw  [dash pattern={on 0.84pt off 2.51pt}]  (240.89,160) -- (431,160) ;
\draw  [fill={rgb, 255:red, 208; green, 2; blue, 27 }  ,fill opacity=1 ] (564.2,269.8) .. controls (564.2,268.14) and (565.54,266.8) .. (567.2,266.8) .. controls (568.86,266.8) and (570.2,268.14) .. (570.2,269.8) .. controls (570.2,271.46) and (568.86,272.8) .. (567.2,272.8) .. controls (565.54,272.8) and (564.2,271.46) .. (564.2,269.8) -- cycle ;
\draw [color={rgb, 255:red, 189; green, 16; blue, 224 }  ,draw opacity=1 ][line width=1.5]    (547.2,269.8) .. controls (552.89,260) and (560.89,260.8) .. (567.2,269.8) ;
\draw [color={rgb, 255:red, 74; green, 144; blue, 226 }  ,draw opacity=1 ][line width=1.5]    (108.6,270.8) .. controls (113.4,284.59) and (131,282.99) .. (135.8,270.2) ;

\draw (75.6,174) node [anchor=north west][inner sep=0.75pt]  [font=\normalsize] [align=left] {$\displaystyle _{1}$};
\draw (552.4,174) node [anchor=north west][inner sep=0.75pt]  [font=\normalsize] [align=left] {$\displaystyle _{\frac{2k-3}{2k-1} n+\beta n}$};
\draw (618,179) node [anchor=north west][inner sep=0.75pt]  [font=\normalsize] [align=left] {$\displaystyle _{n}$};
\draw (100,289.4) node [anchor=north west][inner sep=0.75pt]  [font=\footnotesize] [align=left] {$\displaystyle v_{x}{}_{_{1}}$};
\draw (127.2,289.4) node [anchor=north west][inner sep=0.75pt]  [font=\footnotesize] [align=left] {$\displaystyle v_{x}{}_{_{2}}$};
\draw (304.8,289) node [anchor=north west][inner sep=0.75pt]  [font=\footnotesize] [align=left] {$\displaystyle v_{z}$};
\draw (332,287.8) node [anchor=north west][inner sep=0.75pt]  [font=\footnotesize] [align=left] {$\displaystyle v_{i_{1}}$};
\draw (364,289.4) node [anchor=north west][inner sep=0.75pt]  [font=\footnotesize] [align=left] {$\displaystyle v_{i_{2}}$};
\draw (173.6,290.2) node [anchor=north west][inner sep=0.75pt]  [font=\footnotesize] [align=left] {$\displaystyle v_{x}{}_{_{k-1}}$};
\draw (149.2,294) node [anchor=north west][inner sep=0.75pt]   [align=left] {...};
\draw (402.4,288.6) node [anchor=north west][inner sep=0.75pt]  [font=\footnotesize] [align=left] {$\displaystyle v_{i_{k-3}}$};
\draw (385.2,294) node [anchor=north west][inner sep=0.75pt]   [align=left] {...};
\draw (540.6,288.2) node [anchor=north west][inner sep=0.75pt]  [font=\footnotesize] [align=left] {$\displaystyle v_{i}$};
\draw (366,174) node [anchor=north west][inner sep=0.75pt]  [font=\normalsize] [align=left] {$\displaystyle _{\frac{k-1}{2k-1} n-(( k-2) k+1) \beta n}$};
\draw (210,174) node [anchor=north west][inner sep=0.75pt]  [font=\normalsize] [align=left] {$\displaystyle _{( 2k-1) \varepsilon n}$};
\draw (146,210) node [anchor=north west][inner sep=0.75pt]  [font=\large] [align=left] {$\displaystyle _{U}$};
\draw (224.4,151) node [anchor=north west][inner sep=0.75pt]  [font=\large] [align=left] {$\displaystyle _{W}$};
\draw (561.4,289) node [anchor=north west][inner sep=0.75pt]  [font=\footnotesize] [align=left] {$\displaystyle v_{j}$};

\end{tikzpicture}

\caption{One possible configuration of $T^3$ under Case 1}
\label{fig:T3-1}
\end{figure}
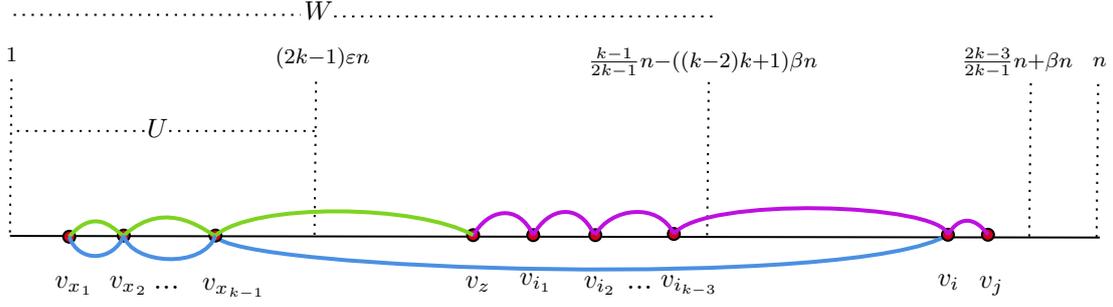

\textbf{Case 2.} For all ($k-1$)-subset $\{v_{x_{i_1}},v_{x_{i_2}},\dots,v_{x_{i_{k-1}}}\}$ of $V(K)$, we have 
\[
|N(v_{x_{i_1}}v_{x_{i_2}}\dots v_{x_{i_{k-1}}})\cap W|< k\varepsilon n.
\]
Then as $\delta(H) \geq \frac{2}{2k-1}n - \alpha n$, we have
\[
\begin{aligned}
|N(v_{x_{i_1}}v_{x_{i_2}}\dots v_{x_{i_{k-1}}})\cap W'| &\geq \delta(H) - k\varepsilon n - \frac{n}{2k-1}\\ 
&\geq \frac{n}{2k-1} - \alpha n - k\varepsilon n.
\end{aligned}
\]
Since $|W'| = \frac{k-1}{2k-1}n + ((k-2)k+1)\beta n$, we may assume that  
\[
\left|N(v_{x_1}\dots v_{x_{k-1}}) \cap N(v_{x_2}\dots v_{x_k}) \cap W'\right| \geq (k+1)\varepsilon n.
\]  
Choose $v_w \in W'$ such that $v_w \in N(v_{x_1}\dots v_{x_{k-1}}) \cap N(v_{x_2}\dots v_{x_k})$ and $v_w$ is adjacent to each of $v_{x_1},\dots,v_{x_k}$ in $G$. Select $k-3$ vertices $v_{i_1},\dots ,v_{i_{k-3}}$ with indices $i_1\le i_2\le \cdots \le i_{k-3}\le  \frac{k-1}{2k-1}n - (k(k-2)+1)\beta n$.  
Choose $v_i$ with $i \leq \frac{2k-3}{2k-1}n + \alpha n + (2k-1)\varepsilon n$ such that $v_i \in N(v_{x_1} v_{x_k} v_{i_1}\dots v_{i_{k-3}})$ and $v_{x_1},\dots,v_{x_k}, v_w, v_{i_1},\dots,v_{i_{k-3}}, v_i$ is a $(2k-1)$-clique in $G$.  
Thus, we obtain a $B$-avoiding $T^3 \in \mathcal{T}_{k,B}$
whose vertex set is $\{v_{x_1},\dots,v_{x_k}, v_w, v_{i_1},\dots,v_{i_{k-3}}, v_i\}$ and edge set is 
\[
\{v_{x_1}\dots v_{x_{k-1}}v_w,v_{x_2}\dots v_{x_{k-1}}v_{x_k}v_w,v_{x_1}v_{x_k}v_{i_1}\dots v_{i_{k-3}}v_i\}.
\] 
It is dominated by every set in $\mathcal{V}_3$, see Figure~\ref{fig:T3-2}.
\\[4pt]
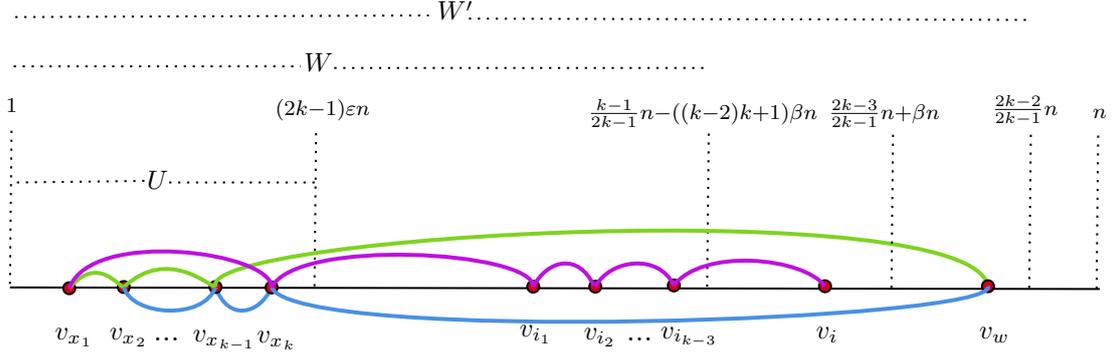
\begin{figure}
\centering

\tikzset{every picture/.style={line width=0.75pt}} 

\begin{tikzpicture}[x=0.75pt,y=0.75pt,yscale=-1,xscale=1]

\draw    (79,270.4) -- (623,271.2) ;
\draw  [fill={rgb, 255:red, 208; green, 2; blue, 27 }  ,fill opacity=1 ] (105.6,270.8) .. controls (105.6,269.14) and (106.94,267.8) .. (108.6,267.8) .. controls (110.26,267.8) and (111.6,269.14) .. (111.6,270.8) .. controls (111.6,272.46) and (110.26,273.8) .. (108.6,273.8) .. controls (106.94,273.8) and (105.6,272.46) .. (105.6,270.8) -- cycle ;
\draw  [fill={rgb, 255:red, 208; green, 2; blue, 27 }  ,fill opacity=1 ] (132.8,270.2) .. controls (132.8,268.54) and (134.14,267.2) .. (135.8,267.2) .. controls (137.46,267.2) and (138.8,268.54) .. (138.8,270.2) .. controls (138.8,271.86) and (137.46,273.2) .. (135.8,273.2) .. controls (134.14,273.2) and (132.8,271.86) .. (132.8,270.2) -- cycle ;
\draw  [fill={rgb, 255:red, 208; green, 2; blue, 27 }  ,fill opacity=1 ] (337.2,270) .. controls (337.2,268.34) and (338.54,267) .. (340.2,267) .. controls (341.86,267) and (343.2,268.34) .. (343.2,270) .. controls (343.2,271.66) and (341.86,273) .. (340.2,273) .. controls (338.54,273) and (337.2,271.66) .. (337.2,270) -- cycle ;
\draw  [fill={rgb, 255:red, 208; green, 2; blue, 27 }  ,fill opacity=1 ] (368.2,270) .. controls (368.2,268.34) and (369.54,267) .. (371.2,267) .. controls (372.86,267) and (374.2,268.34) .. (374.2,270) .. controls (374.2,271.66) and (372.86,273) .. (371.2,273) .. controls (369.54,273) and (368.2,271.66) .. (368.2,270) -- cycle ;
\draw [line width=0.75]  [dash pattern={on 0.84pt off 2.51pt}]  (79.8,191.6) -- (79,269.4) ;
\draw  [dash pattern={on 0.84pt off 2.51pt}]  (427.8,193) -- (427,270.8) ;
\draw  [dash pattern={on 0.84pt off 2.51pt}]  (588.6,193.6) -- (587.8,271.4) ;
\draw  [dash pattern={on 0.84pt off 2.51pt}]  (622.2,194) -- (621.4,271.8) ;
\draw [color={rgb, 255:red, 126; green, 211; blue, 33 }  ,draw opacity=1 ][line width=1.5]    (108.6,270.8) .. controls (123,253.6) and (133.4,270.4) .. (135.8,270.2) ;
\draw [color={rgb, 255:red, 74; green, 144; blue, 226 }  ,draw opacity=1 ][line width=1.5]    (181.6,270.2) .. controls (188.09,288) and (203.67,283.83) .. (209.6,270.4) ;
\draw  [fill={rgb, 255:red, 208; green, 2; blue, 27 }  ,fill opacity=1 ] (178.6,270.2) .. controls (178.6,268.54) and (179.94,267.2) .. (181.6,267.2) .. controls (183.26,267.2) and (184.6,268.54) .. (184.6,270.2) .. controls (184.6,271.86) and (183.26,273.2) .. (181.6,273.2) .. controls (179.94,273.2) and (178.6,271.86) .. (178.6,270.2) -- cycle ;
\draw [color={rgb, 255:red, 126; green, 211; blue, 33 }  ,draw opacity=1 ][line width=1.5]    (135.8,270.2) .. controls (158.2,249.6) and (179.2,270.4) .. (181.6,270.2) ;
\draw [color={rgb, 255:red, 74; green, 144; blue, 226 }  ,draw opacity=1 ][line width=1.5]    (135.8,270.2) .. controls (139.8,284.8) and (175.8,287.2) .. (181.6,270.2) ;
\draw  [fill={rgb, 255:red, 208; green, 2; blue, 27 }  ,fill opacity=1 ] (407.4,269.4) .. controls (407.4,267.74) and (408.74,266.4) .. (410.4,266.4) .. controls (412.06,266.4) and (413.4,267.74) .. (413.4,269.4) .. controls (413.4,271.06) and (412.06,272.4) .. (410.4,272.4) .. controls (408.74,272.4) and (407.4,271.06) .. (407.4,269.4) -- cycle ;
\draw  [fill={rgb, 255:red, 208; green, 2; blue, 27 }  ,fill opacity=1 ] (482.87,269.8) .. controls (482.87,268.14) and (484.21,266.8) .. (485.87,266.8) .. controls (487.52,266.8) and (488.87,268.14) .. (488.87,269.8) .. controls (488.87,271.46) and (487.52,272.8) .. (485.87,272.8) .. controls (484.21,272.8) and (482.87,271.46) .. (482.87,269.8) -- cycle ;
\draw [color={rgb, 255:red, 189; green, 16; blue, 224 }  ,draw opacity=1 ][line width=1.5]    (340.2,270) .. controls (347.6,256.2) and (362.8,253) .. (370.4,269.4) ;
\draw [color={rgb, 255:red, 189; green, 16; blue, 224 }  ,draw opacity=1 ][line width=1.5]    (371.2,270) .. controls (378.6,256.2) and (402.8,253) .. (410.4,269.4) ;
\draw [color={rgb, 255:red, 189; green, 16; blue, 224 }  ,draw opacity=1 ][line width=1.5]    (410.4,269.4) .. controls (424.89,250.4) and (475,255.16) .. (485.87,269.8) ;
\draw [color={rgb, 255:red, 126; green, 211; blue, 33 }  ,draw opacity=1 ][line width=1.5]    (181.6,270.2) .. controls (171.67,243.83) and (559,222.5) .. (567.2,269.8) ;
\draw [line width=0.75]  [dash pattern={on 0.84pt off 2.51pt}]  (231.8,192.8) -- (231,270.6) ;
\draw  [dash pattern={on 0.84pt off 2.51pt}]  (82.49,217.6) -- (148.09,216.8) ;
\draw  [dash pattern={on 0.84pt off 2.51pt}]  (158.49,217.6) -- (231.29,217.6) ;
\draw  [dash pattern={on 0.84pt off 2.51pt}]  (80.89,159.2) -- (224.09,159.2) ;
\draw  [dash pattern={on 0.84pt off 2.51pt}]  (240.89,160) -- (428,160) ;
\draw  [dash pattern={on 0.84pt off 2.51pt}]  (81.69,134.4) -- (288.09,133.6) ;
\draw  [dash pattern={on 0.84pt off 2.51pt}]  (308.09,135.2) -- (589,135.83) ;
\draw  [fill={rgb, 255:red, 208; green, 2; blue, 27 }  ,fill opacity=1 ] (564.2,269.8) .. controls (564.2,268.14) and (565.54,266.8) .. (567.2,266.8) .. controls (568.86,266.8) and (570.2,268.14) .. (570.2,269.8) .. controls (570.2,271.46) and (568.86,272.8) .. (567.2,272.8) .. controls (565.54,272.8) and (564.2,271.46) .. (564.2,269.8) -- cycle ;
\draw  [dash pattern={on 0.84pt off 2.51pt}]  (519.8,193.6) -- (519,271.4) ;
\draw  [fill={rgb, 255:red, 208; green, 2; blue, 27 }  ,fill opacity=1 ] (206.6,270.4) .. controls (206.6,268.74) and (207.94,267.4) .. (209.6,267.4) .. controls (211.26,267.4) and (212.6,268.74) .. (212.6,270.4) .. controls (212.6,272.06) and (211.26,273.4) .. (209.6,273.4) .. controls (207.94,273.4) and (206.6,272.06) .. (206.6,270.4) -- cycle ;
\draw [color={rgb, 255:red, 74; green, 144; blue, 226 }  ,draw opacity=1 ][line width=1.5]    (209.6,270.4) .. controls (222.33,299.16) and (573,287.16) .. (567.2,269.8) ;
\draw [color={rgb, 255:red, 189; green, 16; blue, 224 }  ,draw opacity=1 ][line width=1.5]    (108.6,270.8) .. controls (119.67,240.5) and (209.67,252.5) .. (209.6,270.4) ;
\draw [color={rgb, 255:red, 189; green, 16; blue, 224 }  ,draw opacity=1 ][line width=1.5]    (209.6,270.4) .. controls (222.33,245.16) and (340.27,252.1) .. (340.2,270) ;

\draw (75.6,174) node [anchor=north west][inner sep=0.75pt]  [font=\normalsize] [align=left] {$\displaystyle _{1}$};
\draw (567.6,172) node [anchor=north west][inner sep=0.75pt]  [font=\normalsize] [align=left] {$\displaystyle _{\frac{2k-2}{2k-1} n}$};
\draw (618,179) node [anchor=north west][inner sep=0.75pt]  [font=\normalsize] [align=left] {$\displaystyle _{n}$};
\draw (100,289.4) node [anchor=north west][inner sep=0.75pt]  [font=\footnotesize] [align=left] {$\displaystyle v_{x}{}_{_{1}}$};
\draw (127.2,289.4) node [anchor=north west][inner sep=0.75pt]  [font=\footnotesize] [align=left] {$\displaystyle v_{x}{}_{_{2}}$};
\draw (332,287.8) node [anchor=north west][inner sep=0.75pt]  [font=\footnotesize] [align=left] {$\displaystyle v_{i_{1}}$};
\draw (364,289.4) node [anchor=north west][inner sep=0.75pt]  [font=\footnotesize] [align=left] {$\displaystyle v_{i_{2}}$};
\draw (168.8,290.2) node [anchor=north west][inner sep=0.75pt]  [font=\footnotesize] [align=left] {$\displaystyle v_{x}{}_{_{k-1}}$};
\draw (149.2,294) node [anchor=north west][inner sep=0.75pt]   [align=left] {...};
\draw (402.4,288.6) node [anchor=north west][inner sep=0.75pt]  [font=\footnotesize] [align=left] {$\displaystyle v_{i_{k-3}}$};
\draw (385.2,294) node [anchor=north west][inner sep=0.75pt]   [align=left] {...};
\draw (479.93,288.87) node [anchor=north west][inner sep=0.75pt]  [font=\footnotesize] [align=left] {$\displaystyle v_{i}$};
\draw (366,174) node [anchor=north west][inner sep=0.75pt]  [font=\normalsize] [align=left] {$\displaystyle _{\frac{k-1}{2k-1} n-(( k-2) k+1) \beta n}$};
\draw (210,174) node [anchor=north west][inner sep=0.75pt]  [font=\normalsize] [align=left] {$\displaystyle _{( 2k-1) \varepsilon n}$};
\draw (146,210) node [anchor=north west][inner sep=0.75pt]  [font=\large] [align=left] {$\displaystyle _{U}$};
\draw (224.4,151) node [anchor=north west][inner sep=0.75pt]  [font=\large] [align=left] {$\displaystyle _{W}$};
\draw (290.8,125) node [anchor=north west][inner sep=0.75pt]  [font=\large] [align=left] {$\displaystyle _{W'}$};
\draw (561.4,289) node [anchor=north west][inner sep=0.75pt]  [font=\footnotesize] [align=left] {$\displaystyle v_{w}$};
\draw (486,174) node [anchor=north west][inner sep=0.75pt]  [font=\normalsize] [align=left] {$\displaystyle _{\frac{2k-3}{2k-1} n+\beta n}$};
\draw (200.8,291) node [anchor=north west][inner sep=0.75pt]  [font=\footnotesize] [align=left] {$\displaystyle v_{x}{}_{_{k}}$};

\end{tikzpicture}

\caption{One possible configuration of $T^3$ under Case 2}
\label{fig:T3-2}
\end{figure}

\noindent
\textbf{Divisibility reduction.} We show it suffices to prove the lemma when $(2k-1) \mid n$. To see this, form augmented blow-ups $H'$ and $B'$ of $H$ and $B$ in the following way. Both $H'$ and $B'$ have vertex set $V'$, which is a set formed from $V$ by replacing each vertex $u \in V$ by $2k-1$ copies $u^1, \dots, u^{2k-1}$. So $|V'| = (2k-1)n$. 
The edges of $H'$ are the $k$-sets $\{u_1^{i_1},u_2^{i_2},\dots, u_k^{i_k}\}$ for which $\{u_1,u_2,\dots, u_k\} \in E(H)$ and $i_1,i_2,\dots,i_k \in [2k-1]$, and also all $k$-sets of the form $\{u_1^{i_1},u_1^{i_2},u_2^{i_3},\dots, u_{k-1}^{i_{k}}\}$ with $u_{1}, u_{2},\dots, u_{k-1} \in V$ (we may have $u_i=u_j$ for $i,j\in [k-1]$) and $i_1,i_2,\dots, i_k \in [2k-1]$ with $i_1 \neq i_2$. 
The edges of $B'$ are all the pairs $u^iv^j$ with $uv \in E(B)$ and $i,j \in [2k-1]$, and also all pairs $u^iu^j$ with $u \in V$ and $i, j \in [2k-1]$ with $i \neq j$. This definition implies that 
\[
\delta(H') \geq (2k-1)\delta(H) \geq (2/(2k-1)-\alpha) ((2k-1)n)
\] 
and 
\[
\Delta(B') = (2k-1)\Delta(B) + (2k-2) \leq 2\eps ((2k-1)n).
\] 
So, the lemma in the case that $(2k-1)\mid n$ implies that either $H'$ admits a perfect $B'$-avoiding fractional $T_k$-tiling $\omega'$ or $H'$ is $\gamma$-extremal.

If $H'$ admits a perfect $B'$-avoiding fractional $T_k$-tiling $\omega'$, our choice of $B'$ implies that for each $u \in V$ and $i \neq j$ there is no $T\in \mathcal{T}_k(H')$ with $\omega'(T) > 0$ which contains both $u_i$ and $u_j$. In particular, every $B'$-avoiding copy $T$ of $T_k$ in $H'$ corresponds directly to a copy $T^*$ of $T_k$ in $H$. 
Setting $\omega(T^*)$ to be the average of $\omega'(T)$ over every copy $T$ of $T_k$ in $H'$ that corresponds to $T^*$ therefore gives a perfect $B$-avoiding fractional $T_k$-tiling $\omega$ in $H$. On the other hand, if $H'$ is $\gamma$-extremal then there is a set $S' \subseteq V'$ of size $(2k-3)n$ with $d(H'[S']) \leq \gamma$. Let $S^*$ be the set of all vertices $u \in V$ for which $u_i \in S'$ for some $i \in [2k-1]$. We then have $|S'|/(2k-1) \leq |S^*| \leq |S'|$ and $e(H[S^*]) \leq e(H'[S'])$, so 
$$d(H[S^*]) = \frac{e(H[S^*])}{\binom{|S^*|}{k}} \leq \frac{e(H'[S'])}{\frac{1}{(2k-1)^k}\binom{|S'|}{k}} = (2k-1)^k d(H'[S']) \leq (2k-1)^k\gamma.$$ 
Since $|S^*| \geq (2k-3)n/(2k-1)$, by an averaging argument there is a set $S \subseteq S^*$ of size $\lfloor (2k-3)n/(2k-1) \rfloor$ with $d(H[S]) \leq d(H[S^*])$, and $S$ witnesses that $H$ is $(2k-1)^k\gamma$-extremal. 
\end{proof}

\section{The extremal case} \label{sec:extremal}
In this section, we focus on establishing Lemma~\ref{lem:extremal}, which asserts that Theorem~\ref{thm:main} holds in the extremal scenario. To achieve this, we complete the required $T_k$-tiling by locating a perfect matching in a suitably dense auxiliary $(2k-1)$-partite $(2k-1)$-graph. This step relies on a result by Daykin and H\"aggkvist~\cite{DH}.

\begin{theorem}[Daykin and H\"aggkvist~\cite{DH}] \label{5partitematching}
Let $J$ be a $k$-partite $k$-graph whose vertex classes each have size $n$. If every vertex of $J$ is contained in at least $(k-1)n^{k-1}/k$ edges of $J$, then $J$ admits a perfect matching.
\end{theorem}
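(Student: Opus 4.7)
The plan is to argue by contradiction via a maximum matching. Suppose $J$ has no perfect matching, and let $M$ be a matching of maximum size $m<n$. For each $i\in[k]$ set $U_i:=V_i\setminus V(M)$, so that $|U_i|=n-m\ge 1$ and $M$ saturates the remaining $m$ vertices in each part. The strategy is to translate the maximality of $M$ into a large lower bound on the number of forced non-edges of $J$ inside $V_1\times\cdots\times V_k$, and then to contradict this against the upper bound on non-edges supplied by the degree hypothesis.

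The first step collects the structural consequences of $M$ being maximum. At the most basic level, no edge of $J$ is contained in $\bigcup_{i=1}^{k}U_i$, since such an edge would augment $M$. More refined, I will analyse ``swap-and-extend'' augmentations: if $f\in M$ and there is an edge $e\in E(J)$ sharing exactly $k-1$ vertices with $f$ such that the unique vertex of $V(e)\setminus V(f)$ lies in $U_i$, then replacing $f$ by $e$ in $M$ frees the vertex $f_i:=V(f)\cap V_i$; should $f_i$ lie in any edge whose remaining $k-1$ vertices all fall in $\bigcup_{j\neq i}U_j$, we obtain an augmentation of $M$. Thus every such configuration must be blocked by at least one missing edge of $J$.

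The counting step converts these blockages into numerical inequalities. The degree hypothesis is equivalent to saying that every vertex $v$ is incident to at most $n^{k-1}/k$ missing edges of $J$, viewed as a subgraph of the complete $k$-partite $k$-graph. Summing over $v\in\bigcup_i U_i$ gives a total missing-edge incidence budget of at most $k(n-m)\cdot n^{k-1}/k=(n-m)n^{k-1}$. On the other hand, the basic obstruction already forces $(n-m)^{k}$ non-edges inside $U_1\times\cdots\times U_k$, contributing $k(n-m)^{k}$ to the incidence count, and the swap-and-extend obstruction contributes further forced non-edges parameterised by pairs $(f,i)\in M\times[k]$. A careful tally of these contributions, taking into account both the basic and the swap-and-extend obstructions, should strictly exceed the budget whenever $m<n$, yielding the desired contradiction and forcing $m=n$.

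The main obstacle I anticipate is the bookkeeping in the final inequality: a single non-edge of $J$ may simultaneously block several augmenting configurations, so a naive sum overcounts. I plan to organise the correspondence between non-edges and augmentation obstructions as an auxiliary bipartite graph between forced non-edges and augmentation witnesses, and apply a Hall-type or greedy selection argument to extract the requisite number of \emph{distinct} non-edges. A secondary wrinkle arises when $n-m$ is small, where the $(n-m)^{k}$ term in the counting becomes negligible and one must rely more heavily on the swap-and-extend contribution through the $m$ matched edges; the two regimes should be handled uniformly by choosing the obstruction class that dominates for the given value of $m$, and if needed by iterating the augmentation analysis along longer alternating walks.
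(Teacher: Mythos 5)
This statement is quoted in the paper as a classical result of Daykin and H\"aggkvist; the paper gives no proof of it, so there is no in-paper argument to compare against and your proposal has to stand on its own. As written it does not: it is a plan whose decisive step --- the claim that the forced non-edges ``should strictly exceed the budget whenever $m<n$'' --- is never carried out, and you yourself defer the resolution of the overcounting issue to an unspecified ``Hall-type or greedy selection argument'' or to ``longer alternating walks''. That missing count is not a routine bookkeeping matter; it is exactly where the content of the theorem lies.

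More concretely, the two obstruction classes you describe cannot yield the contradiction in the main regime where the deficiency $n-m$ is small. Your budget is $(n-m)n^{k-1}$ incidences between missing edges and unmatched vertices. The box obstruction contributes $k(n-m)^{k}$ incidences, which beats the budget only when $n-m>n\,k^{-1/(k-1)}$. For the swap-and-extend obstruction, a pair $(f,i)$ is usable only if $J$ actually contains an edge of the very special form $(V(f)\setminus\{f_i\})\cup\{u\}$ with $u\in U_i$ (nothing in the hypothesis guarantees many such edges), and even if every pair $(f,i)$ were usable it would force at most about $mk(k-1)(n-m)^{k-1}$ further incidences; for $n-m=1$ this is $O(k^2 n)$, far below the budget $n^{k-1}$ when $k\ge 3$. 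The underlying reason is that when $n-m=1$ an edge through an unmatched vertex whose matched vertices all lie in a single matching edge $f$ has at most $2^{k-1}$ possible shapes, so almost all edges through unmatched vertices meet two or more matching edges, and one-vertex swaps never free a full transversal. A correct argument at this threshold must count in the opposite direction --- lower-bound the at least $(k-1)n^{k-1}$ edges through a system of unmatched representatives, pigeonhole them over the at most $n-1$ matching edges they meet, and analyse multi-edge exchange configurations --- which is a genuinely different (and harder) scheme than the one you outline. So the proposal has a real gap, not a presentational one.
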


\begin{corollary} \label{auxiliarymatching}
Suppose $k\in\N$ and choose $1/n \ll \beta \ll \frac{1}{k}$. Let $A$ and $B$ be disjoint sets of size $(2k-3)n$ and $2n$ respectively, and let $J$ be a $(2k-1)$-graph on~$(2k-1)n$ vertices with vertex set $V := A \cup B$ such that every edge in $J$ contains $2k-3$ vertices of $A$ and $2$ vertices of $B$. If $e(J) \geq (1-\beta) \binom{(2k-3)n}{2k-3}\binom{2n}{2}$ and every vertex of $J$ is contained in at least $n^{2k-2}/(2k-1)^{(k+1)^2}$ edges of $J$, then $J$ admits a perfect matching.  
\end{corollary}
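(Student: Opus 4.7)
My plan is to reduce to Theorem~\ref{5partitematching} in two stages: first greedily absorb the few low-degree vertices of $J$ into a small matching $M$, then take a uniform random balanced partition of the remaining vertices into $2k-1$ equal classes so that the induced $(2k-1)$-partite subgraph meets the Daykin--H\"aggkvist degree threshold.

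For the first stage, let $D_A := \binom{(2k-3)n-1}{2k-4}\binom{2n}{2}$ and $D_B := \binom{(2k-3)n}{2k-3}(2n-1)$ be the maximum possible degrees in $A$ and $B$. Double-counting gives $\sum_{v\in A} d_J(v) = (2k-3)e(J) \geq (1-\beta)(2k-3)n\,D_A$, so by Markov's inequality the set $A^{\mathrm{bad}} := \{v \in A : d_J(v) < (1-\sqrt{\beta})D_A\}$ has size at most $\sqrt{\beta}(2k-3)n$, and similarly for $B^{\mathrm{bad}}$. I would then greedily build a matching $M \subseteq E(J)$ whose vertex set covers $A^{\mathrm{bad}} \cup B^{\mathrm{bad}}$: processing uncovered bad vertices one at a time, for the current bad vertex $v$ at most $|V(M_{\mathrm{prev}})|\cdot O(n^{2k-3}) = O(\sqrt{\beta}\,n^{2k-2})$ of the edges through $v$ meet previously matched vertices, which is strictly less than the hypothesized $n^{2k-2}/(2k-1)^{(k+1)^2}$ edges incident to $v$ once $\beta$ is sufficiently small (this uses only that $\beta\ll 1/k$). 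Hence the matching always extends, $|M|=O(\sqrt{\beta}\,n)$, and the residual sets $A' := A\setminus V(M)$, $B' := B\setminus V(M)$ are balanced with sizes $(2k-3)n'$ and $2n'$ where $n' := n-|M|$.

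For the second stage, I would partition $A'$ uniformly at random into $2k-3$ parts $A_1',\ldots,A_{2k-3}'$ of size $n'$ and $B'$ into $2$ parts $B_1',B_2'$ of size $n'$, and let $J''$ be the induced $(2k-1)$-partite $(2k-1)$-subgraph. A direct calculation shows that for each $v \in A'\cup B'$, any edge of $J[A'\cup B']$ through $v$ survives into $J''$ with probability asymptotic to $\tfrac{(2k-4)!}{2(2k-3)^{2k-4}}$, which when combined with the degree bound $d_{J[A'\cup B']}(v) \geq (1-O(\sqrt{\beta}))D_A$ (valid because $v$ is not bad in $J$ and only $O(\sqrt\beta n)$ vertices were removed) yields
\[
\mathbb{E}[d_{J''}(v)] \geq (1-O(\sqrt{\beta}))(n')^{2k-2}.
\]
Since the $J$-codegree of any pair of vertices is $O(n^{2k-3})$, swapping the class labels of two vertices changes $d_{J''}(v)$ by $O(n^{2k-3})$, so McDiarmid's inequality for random balanced partitions gives $\Pr[d_{J''}(v) \leq (1-O(\sqrt{\beta}))(n')^{2k-2}] \leq e^{-\Omega(n)}$. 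A union bound over the $(2k-1)n'$ vertices shows that with probability $1-o(1)$ every vertex of $J''$ has degree at least $(1-O(\sqrt{\beta}))(n')^{2k-2} > \tfrac{2k-2}{2k-1}(n')^{2k-2}$, where the last inequality holds because $\beta\ll 1/k$. Fixing such a partition and applying Theorem~\ref{5partitematching} to $J''$ delivers a perfect matching $M'$, whence $M \cup M'$ is the required perfect matching of $J$.

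The main obstacle I foresee is the simultaneous concentration in the second stage: the mean computation is a routine linearity argument, but producing a \emph{single} partition in which \emph{every} vertex has $J''$-degree above the Daykin--H\"aggkvist threshold requires combining McDiarmid-type estimates---with the correct Lipschitz constant given by the maximum codegree of $J$---with a union bound over linearly many vertices. The greedy matching in the first stage is more routine but hinges on the hypothesized lower bound $n^{2k-2}/(2k-1)^{(k+1)^2}$ being large enough to dominate the $O(\sqrt{\beta}\,n^{2k-2})$ blocked edges at each step; the unusual exponent $(k+1)^2$ in the hypothesis is precisely what gives comfortable room here.
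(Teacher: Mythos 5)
Your proposal is correct, and the first stage --- greedily covering the few low-degree vertices by a small matching $M$, noting that the remaining graph has balanced part sizes since each edge uses $2k-3$ vertices of $A$ and $2$ of $B$ --- is essentially the paper's. The second stage, however, takes a genuinely different route. You pass to a \emph{random} balanced partition and prove, via linearity of expectation plus a McDiarmid-type concentration inequality for random balanced partitions with Lipschitz constant equal to the maximum codegree of $J$, that with positive probability every vertex meets the Daykin--H\"aggkvist threshold. The paper instead avoids probability altogether: it introduces the complete ``host'' $(2k-1)$-graph $G$ of all legal $(2k-1)$-sets, tracks the \emph{deficiency} $d_G(v)-d_J(v)$ of each surviving vertex, observes that this deficiency is at most $\beta^{1/3}n^{2k-2}$ after removing $V(M)$, and then takes an \emph{arbitrary} balanced partition. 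The key point is that the $(2k-1)$-partite subgraph can only discard non-edges, so $d_{G_2}(v)-d_{J_2}(v)\le d_{G_1}(v)-d_{J_1}(v)$, while $d_{G_2}(v)$ is exactly $n_2^{2k-2}$; this immediately gives $d_{J_2}(v)\ge n_2^{2k-2}-\beta^{1/3}n^{2k-2}$, which exceeds the Daykin--H\"aggkvist threshold. Both arguments are sound; the paper's deterministic deficiency argument is shorter and dispenses with the concentration and union-bound bookkeeping, whereas your probabilistic approach is more cumbersome but perfectly workable and perhaps more portable to settings where the complete host hypergraph is less convenient to define.
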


We are now ready to present the proof of Lemma~\ref{lem:extremal}. Our approach builds on the work of Bowtell, Kathapurkar, Morrison and Mycroft~\cite{BKMM}, who proved the lemma for the case $k=3$. Here, we adapt and generalize their method to arbitrary $k$-graphs.

\begin{proof}[\bf Proof of Lemma~\ref{lem:extremal}]
Suppose that $1/n \ll \gamma \ll \gamma' \ll \beta \ll \tfrac{1}{k}$ and that $(2k-1) \mid n$. Let $H$ be a $k$-graph on $n$ vertices with minimum codegree $\delta(H) \ge \frac{2n}{2k-1}$, and assume that $H$ is $\gamma$-extremal. That is, there exists a subset $S \subseteq V(H)$ of size $|S| = \frac{2k-3}{2k-1}n$ such that the density $d(H[S]) \le \gamma$.
We say a $(k-1)$-set $x_1 \dots x_{k-1}\subseteq S$ is \emph{bad} if $|N(x_1 \dots x_{k-1}) \cap S| > \sqrt{\gamma}n$, and \emph{good} if $|N(x_1 \dots x_{k-1}) \cap S| \le \sqrt{\gamma}n$. Sets not entirely contained in $S$ are considered neither good nor bad.
There are at most $\sqrt{\gamma} n^{k-1}$ bad $(k-1)$-sets. Otherwise, they would contribute more than $\frac{1}{k} \cdot \sqrt{\gamma} n^{k-1} \cdot \sqrt{\gamma} n = \frac{\gamma}{k} n^k \ge \gamma \binom{n}{k}$ edges in $H[S]$, which contradicts the assumption $d(H[S]) \le \gamma$.

Let $X$ be the set of vertices in $V(H) \setminus S$ that appear in fewer than $n^{k-1}/(2k-1)^k$ edges containing exactly $k-1$ vertices from $S$. Since each good $(k-1)$-set has at least $\frac{2n}{2k-1} - \sqrt{\gamma}n$ neighbors in $V(H) \setminus S$, and the total number of bad $(k-1)$-sets is at most $\sqrt{\gamma} n^{k-1}$, the total number of $k$-sets with exactly $k-1$ vertices in $S$ that are not edges is at most
$\sqrt{\gamma} n^{k-1} \cdot \frac{2n}{2k-1} + \binom{\frac{2k-3}{2k-1}n}{k-1} \cdot \sqrt{\gamma}n \le \sqrt{\gamma} n^k$.
Each vertex in $X$ is missing at least $\binom{\frac{2k-3}{2k-1}n}{k-1} - \frac{n^{k-1}}{(2k-1)^k} \ge \frac{n^{k-1}}{(k-1)^k}$ such $k$-sets. Therefore, we conclude that $|X| \le (k-1)^k \sqrt{\gamma} n$.
Define $A := S \cup X$ and $B := V(H) \setminus A$. Then we have
$\frac{2k-3}{2k-1}n \le |A| = \frac{2k-3}{2k-1}n + |X| \le \left( \frac{2k-3}{2k-1} + (k-1)^k \sqrt{\gamma} \right)n$
and
$\left( \frac{2}{2k-1} - (k-1)^k \sqrt{\gamma} \right)n \le |B| = \frac{2n}{2k-1} - |X| \le \frac{2n}{2k-1}$.
Moreover, every good $(k-1)$-set lies entirely in $A$ and has at least $\frac{2n}{2k-1} - ((k-1)^k + 1)\sqrt{\gamma}n$ neighbors in $B$.

We claim that there exists a matching $M$ in $H[A]$ of size $|X|$, where each edge contains a good $(k-1)$-set.
To prove this, consider a largest such matching $M$ in $H[A]$ with $|M| \le |X|$. Suppose for contradiction that $|M| < |X|$. Then we have $|V(M)| \le k|M| < k|X| \le k(k-1)^k \sqrt{\gamma} n$.
This allows us to choose $k$ pairwise disjoint good $(k-1)$-sets $Q_1, \dots, Q_k$ disjoint from $V(M)$.
By the minimum codegree condition, each $Q_i$ has at least $|X|$ neighbors in $A$. Since $M$ is maximal, all these neighbors must already be covered by $M$.
However, $\sum_{i=1}^{k} |N(Q_i) \cap A| \ge k|X| > |V(M)|$. So by the pigeonhole principle, there exist distinct vertices $x, y \in V(M)$ and distinct indices  $i, j\in [k]$ such that $x \in N(Q_i)$ and $y \in N(Q_j)$ and $x, y$ lie in the same edge $e \in M$.
Replacing $e$ with the edges $Q_i \cup \{x\}$ and $Q_j \cup \{y\}$ gives a larger matching, contradicting the maximality of $M$.
Therefore, $|M| = |X|$, as claimed.

We now use the matching $M$ to construct a collection of $|X|$ pairwise vertex-disjoint copies of $T_k$ in $H$, where each copy includes $2k - 2$ vertices from $A$ and one vertex from $B$.
To do this, consider an edge $u_1 \dots u_{k-1} w\in M$, where $u_1 \dots u_{k-1}$ form a good $(k - 1)$-set. Choose vertices $z_1, \dots, z_{k - 2}\in A \setminus V(M)$ that have not been used in any previously selected copy of $T_k$.
If the $(k-1)$-set $z_1\dots z_{k - 2}w$ has at least $\gamma' n$ neighbors in $B$, then the two $(k - 1)$-sets $u_1 \dots u_{k - 1}$ and $wz_1 \dots z_{k - 2}$ share at least 
$\gamma' n - ((k - 1)^k + 1) \sqrt{\gamma} n \ge (2k - 1) |X| + |V(M)|$ 
common neighbors in $B$. Therefore, we can select a previously unused vertex $y \in (B \setminus V(M))\cap N(u_1 \dots u_{k - 1})\cap N(w z_1 \dots z_{k - 2})$, thus forming the desired copy of $T_k$ with edges $u_1 \dots u_{k - 1} y$, $u_1 \dots u_{k - 1} w$, and $z_1 \dots z_{k - 2} w y$.
If not, then the $(k - 1)$-set $z_1 \dots z_{k - 2} w$ must have at least $2n / (2k - 1) - \gamma' n$ neighbors in $A$. In this scenario, we can choose $k-1$ vertices $a_1, \dots, a_{k - 1}\in (A \setminus V(M))\cap N(z_1 \dots z_{k - 2} w)$, which have not been used yet, and altogether form a good $(k - 1)$-set. Then we pick a previously unused vertex $c \in B\cap N(a_1 \dots a_{k - 1})$.
This yields a copy of $T_k$ as required with edges $z_1 \dots z_{k - 2} w a_1$, $z_1 \dots z_{k - 2} w a_2$, and $a_1 \dots a_{k - 1} c$.

After constructing $|X|$ vertex-disjoint copies of $T_k$, we remove them from $H$, leaving two subsets $A' \subseteq A$ and $B' \subseteq B$ with
$|A'| = |A| - (2k - 2)|X| = \frac{2k - 3}{2k - 1}n - (2k - 3)|X| = \frac{2k - 3}{2k - 1}n_1$ and 
$|B'| = |B| - |X| = \frac{2n}{2k - 1} - 2|X| = \frac{2n_1}{2k - 1}$, 
where $n_1 := n - (2k - 1)|X| = |A' \cup B'|$. In particular, 
$n - (2k - 1)(k - 1)^k \sqrt{\gamma} n \le n_1 \le n$ and $(2k - 1) \mid n_1$.
Note that the ratio of the sizes satisfies $|A'| : |B'| = (2k - 3) : 2$. Let $H' := H[A' \cup B']$ be the subgraph induced on $n_1$ vertices. Then we have:

\begin{enumerate}[(i)]
    \item At most $\gamma' n_1^{k - 1}$ many $(k - 1)$-sets in $A'$ are not good. Indeed, each such set is either among the at most $\sqrt{\gamma} n^{k - 1}$ bad sets, or contains a vertex from $X$, which has size at most $(k - 1)^k \sqrt{\gamma} n$.\label{i0}

    \item Every good $(k - 1)$-set in $A'$ has at least $\frac{2n_1}{2k - 1} - \gamma' n_1$ neighbors in $B'$, since each good $(k - 1)$-set had at least $\frac{2n}{2k - 1} - ((k - 1)^k + 1)\sqrt{\gamma} n$ neighbors in $B$.\label{i1}

    \item $\delta(H') \ge \frac{2n}{2k - 1} - (2k - 1)|X| \ge \frac{2n_1}{2k - 1} - \gamma' n_1$, since we removed at most $(2k - 1)|X|$ vertices from $H$.\label{i2}

    \item Each vertex in $B'$ lies in at least $\frac{n_1^{k - 1}}{2(2k - 1)^k}$ edges with exactly $k - 1$ vertices from $A'$, because it was not in $X$ and hence belonged to at least $\frac{n^{k - 1}}{(2k - 1)^k}$ such edges in $H$.\label{i3}
\end{enumerate}

Let $J$ be the $(2k - 1)$-graph on $V(H')$ whose edges are all subsets $S \subseteq V(H')$ with $|S \cap A'| = 2k - 3$ and $|S \cap B'| = 2$, such that $H'[S]$ contains a copy of $T_k$, denoted as $T_k^S$. We now show that $J$ satisfies the following properties:
\begin{enumerate}[(P1)]
    \item $e(J) \ge (1 - \beta) \binom{\frac{(2k - 3)n_1}{2k - 1}}{2k - 3} \binom{\frac{2n_1}{2k - 1}}{2}$. \label{item_p1}
    
    \item Every vertex in $H'$ lies in at least $\frac{1}{(2k - 1)^{(k + 1)^2}} \left( \frac{n_1}{2k - 1} \right)^{2k - 2}$ edges of $J$. \label{item_p2}
\end{enumerate}
Using properties (P\ref{item_p1}) and (P\ref{item_p2}) we may apply Corollary~\ref{auxiliarymatching} to obtain a perfect matching in $J$. The corresponding collection of copies $T_k^S$, one for each edge $S \in J$, forms a perfect $T_k$-tiling in $H'$. Together with the previously selected $|X|$ copies of $T_k$ in $H$, this yields a perfect $T_k$-tiling of $H$, completing the proof.

It remains to verify properties (P\ref{item_p1}) and (P\ref{item_p2}). We begin with (P\ref{item_p1}).
By (i), there are at most $\gamma' n_1^{2k - 3}$ subsets $Y = \{x_1, \dots, x_{2k - 3}\} \subseteq A'$ such that not all $(k - 1)$-sets in $\binom{Y}{k - 1}$ are good.
Now fix a set $Y \subseteq A'$ of size $2k - 3$ in which every $(k - 1)$-set is good. By (ii), the number of vertices $y \in B'$ that fail to lie in $\bigcap_{R \in \binom{Y}{k - 1}} N_{H'}(R)$ is at most $\binom{2k - 3}{k - 1} \gamma' n_1$. Therefore, there are at most $\binom{2k - 3}{k - 1} \gamma' n_1 \cdot |B'|$ pairs $\{y_1, y_2\} \subseteq B'$ such that at least one $y_i\notin\bigcap_{R \in \binom{Y}{k - 1}} N_{H'}(R)$ with $i\in[2]$. 
Thus, the total number of $(2k - 1)$-sets $S = \{x_1, \dots, x_{2k - 3}, y_1, y_2\}$ with $x_i \in A'$, $y_j \in B'$ for each $i\in[2k-3]$ and each $j\in[2]$ that do not belong to $E(J)$ is at most
$\gamma' n_1^{2k - 3} \cdot \binom{|B'|}{2} + \binom{|A'|}{2k - 3} \cdot \binom{2k - 3}{k - 1} \gamma' n_1 \cdot |B'| \le 2 \gamma' n_1^{2k - 1}$.
For every remaining $(2k - 1)$-set $S = \{x_1, \dots, x_{2k - 3}, y_1, y_2\}$ with $x_i \in A'$, $y_j \in B'$ for each $i\in[2k-3]$ and each $j\in[2]$, we have $S \in E(J)$, since $H'$ contains a copy $T_k^S$ of $T_k$ with edges $x_1 \dots x_{k - 1} y_1$, $x_2 \dots x_k y_1$, and $x_1 x_k x_{k + 1} \dots x_{2k - 3} y_2$.
Therefore,
$e(J) \ge \binom{\frac{(2k - 3) n_1}{2k - 1}}{2k - 3} \cdot \binom{\frac{2n_1}{2k - 1}}{2} - 2 \gamma' n_1^{2k - 1}$,
which proves property (P\ref{item_p1}).

To establish property (P\ref{item_p2}), consider any vertex $b \in B'$. By property~(\ref{i3}), there are at least $\frac{n_1^{k - 2}}{2(2k - 1)^k}$ distinct $(k - 2)$-sets $a_1 \dots a_{k - 2} \subseteq A'$ such that $|N(a_1 \dots a_{k-2} b) \cap A'| \geq \frac{n_1}{2(2k-1)^k}$.
For each such set, we can proceed to construct a copy of $T_k$ in $H'$ containing $b$ and exactly $2k - 3$ vertices in $A'$. To do so, select a $(k - 1)$-set $c_1 \dots c_{k - 1} \subseteq N(a_1 \dots a_{k - 2} b) \cap A'$ such that $c_1 \dots c_{k - 1}$ forms a good $(k - 1)$-set. Then, since it is good, property~(\ref{i1}) ensures that the set $N(c_1 \dots c_{k - 1}) \cap B'$ has size at least $\frac{2n_1}{(2k - 1)^2} - \gamma'n_1 \geq \frac{n_1}{(2k - 1)^2}$. Picking a vertex $b_1$ from this set gives a copy of $T_k$ in $H'$ with edges $a_1 \dots a_{k - 2} b c_1$, $a_1 \dots a_{k - 2} b c_2$, and $c_1 \dots c_{k - 1} b_1$.
Moreover, from property~(\ref{i0}), the number of good $(k - 1)$-sets among the vertices in $N(a_1 \dots a_{k - 2} b) \cap A'$ is at least
$\binom{\frac{n_1}{2(2k - 1)^k}}{k - 1} - \gamma' n_1^{k - 1} \ge \frac{n_1^{k - 1}}{(2k - 1)^{k^2}}$.
Therefore, each of the $\frac{n_1^{k - 2}}{2(2k - 1)^k}$ choices for $a_1, \dots, a_{k - 2}$ yields at least $\frac{n_1^{k - 1}}{(2k - 1)^{k^2}} \cdot \frac{n_1}{(2k - 1)^2} = \frac{n_1^{k}}{(2k - 1)^{k^2 + 2}}$ copies of $T_k$ that contain $b$.
Multiplying over all such initial sets, we obtain a total of at least
$\frac{n_1^{2k - 2}}{(2k - 1)^{(k + 1)^2}}$ copies of $T_k$ in $H'$ that include $b$ and exactly $2k - 3$ vertices from $A'$.
Since each $(2k - 1)$-subset of $V(H')$ can support at most $(2k - 1)! \le (2k - 1)^{2k - 2}$ labeled copies  of $T_k$, it follows that $b$ is contained in at least
$\frac{1}{(2k - 1)^{(k + 1)^2}} \left( \frac{n_1}{2k - 1} \right)^{2k - 2}$ edges of $J$, as claimed in (P\ref{item_p2}).

Now fix a vertex $a \in A'$. Suppose that there are more than $\frac{n_1^{k - 3}}{2(2k - 1)^k}$ distinct $(k - 3)$-sets $a_1 \dots a_{k - 3} \subseteq A'$ for which at least $\frac{n_1}{2(2k - 1)^k}$ vertices $b \in B'$ satisfy $|N(a_1 \dots a_{k - 3} a b) \cap A'| \ge \frac{n_1}{2(2k - 1)^k}$.
For such a set $a_1 \dots a_{k - 3}$ and vertex $b \in B'$, we select a good $(k - 1)$-set $c_1 \dots c_{k - 1} \subseteq N(a_1 \dots a_{k - 3} a b) \cap A'$, and then choose $b_1 \in N(c_1 \dots c_{k - 1}) \cap B'$.
This yields a copy of $T_k$ with edges $a_1 \dots a_{k - 3} a b c_1$, $a_1 \dots a_{k - 3} a b c_2$, and $c_1 \dots c_{k - 1} b_1$, which contains $a$ and exactly two vertices in $B'$. By exactly
the same argument as we used for each $b \in B'$ in the previous paragraph, we deduce that property (P\ref{item_p2}) also holds for $a$.

Now assume that for at most $\frac{n_1^{k - 3}}{2(2k - 1)^k}$ many $(k - 3)$-sets $a_1 \dots a_{k - 3} \subseteq A'$, there exist more than $\frac{n_1}{2(2k - 1)^k}$ vertices $b \in B'$ such that $|N(a_1 \dots a_{k - 3} a b) \cap A'| \ge \frac{n_1}{2(2k - 1)^k}$. Equivalently, for more than 
$\binom{\frac{(2k - 3)n_1}{2k - 1}}{k - 3} - \frac{n_1^{k - 3}}{2(2k - 1)^k}$
many $(k - 3)$-sets $a_1 \dots a_{k - 3} \subseteq A'$, there are at least 
$\frac{2n_1}{2k - 1} - \frac{n_1}{2(2k - 1)^k}$
vertices $b \in B'$ for which 
$|N(a_1 \dots a_{k - 3} a b) \cap A'| < \frac{n_1}{2(2k - 1)^k}.$
Fix such a $(k - 3)$-set $a_1 \dots a_{k - 3}$, and pick a good $(k - 1)$-set $c_1 \dots c_{k - 1} \subseteq A'$. Then choose a pair $b_1 b_2 \subseteq B'$ satisfying $a_1 \dots a_{k - 3} a b_1 b_2 \in E(H')$ and $b_1 b_2 \subseteq N(c_1 \dots c_{k - 1})$.
This produces a copy of $T_k$ with edges $c_1 \dots c_{k - 1} b_1$, $c_1 \dots c_{k - 1} b_2$, and $b_1 b_2 a a_1 \dots a_{k - 3}$.
From property~(\ref{i0}), we know there are at least
$\binom{\frac{n_1}{2(2k - 1)^k}}{k - 1} - \gamma' n_1^{k - 1} \ge \frac{n_1^{k - 1}}{(2k - 1)^{k - 1}}$
choices for the good $(k - 1)$-set $c_1 \dots c_{k - 1}$.
Now, using our assumption together with properties~(\ref{i1}) and~(\ref{i2}), the number of suitable pairs $b_1 b_2$ in $B'$ is at least
\begin{equation*}
\begin{aligned}
 &\left( \frac{2n_1}{2k - 1} - \frac{n_1}{2(2k - 1)^k} \right) \left( \frac{2n_1}{2k - 1} - \gamma' n_1 - \frac{n_1}{2(2k - 1)^k} \right) + \binom{\frac{2n_1}{2k - 1} - \gamma' n_1}{2} - \binom{\frac{2n_1}{2k - 1}}{2} \\
 & > \frac{n_1^2}{(2k - 1)^2}.
\end{aligned}
\end{equation*}
Hence, this guarantees sufficiently many constructions of $T_k$ containing $a$ and exactly two vertices from $B'$. As in the previous case, this shows that property~(P\ref{item_p2}) holds for $a$, completing the argument.
\end{proof}

We end this section with a proof of Corollary \ref{auxiliarymatching}.

\begin{proof}[\bf{Proof of Corollary \ref{auxiliarymatching}}]
Let $G$ denote the $(2k-1)$-graph on the vertex set $V=A\cup B$ whose edges are all $(2k-1)$-sets in $V$ which contains $2k-3$ vertices from $A$ and $2$ vertices from $B$. Thus, $e(G) = \binom{(2k-3)n}{2k-3} \binom{2n}{2}$. Note that $J$ is a spanning subgraph of $G$. 
Define $X \subseteq V$ to be the set of vertices $x$ satisfying $d_G(x) - d_J(x) \ge \beta^{1/2}n^{2k-2}$. Given that $e(J) \ge (1 - \beta)\binom{(2k-3)n}{2k-3} \binom{2n}{2}$, it follows that $|X| \le (2k - 3)^{2k - 3} \beta^{1/2} n$.
Moreover, combining the assumption that every vertex in $J$ lies in at least $n^{2k - 2}/(2k - 1)^{(k + 1)^2}$ edges and the fact that $|X|\le (2k - 3)^{2k - 3} \beta^{1/2} n$ for $\beta\ll \tfrac{1}{k}$, we may greedily select vertex-disjoint edges in $J$ to cover all vertices in $X$. This yields a matching $M_1$ in $J$ of size at most $|X|$.

Let $V_1 := V \setminus V(M_1)$, and define $J_1 := J[V_1]$ and $G_1 := G[V_1]$. Then for each $v \in V_1$ we have 
\begin{equation*}
    \begin{aligned}
     d_{J_1}(v) &\geq d_J(v)- (2k-1)^{2k-3}|V(M_1)|n^{2k-3} \\ &\geq d_J(v) - (2k-1)^{2k-3}k(2k-3)^{2k-3}\beta^{1/2}n^{2k-2}   
    \end{aligned}           
\end{equation*}
and $d_{G_1}(v) \leq d_{G}(v)$. As $v \notin X$ and $\beta\ll \tfrac{1}{k}
$, it follows that $d_{G_1}(v) - d_{J_1}(v) \le \beta^{1/3}n^{2k-2}$. Now 
arbitrarily partition $A'=A\cap V_1$ into $2k-3$ parts of equal size, and $B'=B\cap V_1$ into two parts of equal size. Let $J_2 \subseteq J_1$ and $G_2 \subseteq G_1$ be the resulting $(2k-1)$-partite subgraphs from the $2k-1$ parts, where each part has size $n_2:=n-|M_1|$. Moreover for every $v \in V_1$ we have $d_{G_2}(v) - d_{J_2}(v) \leq d_{G_1}(v) - d_{J_1}(v) \leq \beta^{1/3}n^{2k-2}$ and $d_{G_2}(v) = n_2^{2k-2}$, so $ d_{J_2}(v) \geq n_2^{2k-2}-\beta^{1/3}n^{2k-2} \geq (2k-2)n_2^{2k-2}/(2k-1)$. By Theorem~\ref{5partitematching} $J_2$ admits a perfect matching $M_2$. Hence, $M_1\cup M_2$ is a perfect matching in~$J$.
\end{proof}

\section{Perfect rainbow tilings} \label{sec:rainbow} We conclude with a concise derivation of Theorem~\ref{thm:rainbow} using the following key concepts.

The \emph{tiling threshold} $\delta_F$ is the infimum of $\delta > 0$ such that for any $\mu > 0$, there exists $n_0$ where for $n \geq n_0$ with $\nu (F) \mid n$, every $k$-graph $H$ on $n$ vertices with $\delta(H) \geq (\delta + \mu)n$ has a perfect $F$-tiling.    
The \emph{rainbow tiling threshold} $\delta^r_F$ is defined similarly for families $\mathcal{H} = \{H_1, \dots, 
    H_{e(F)n/\nu (F)}\}$ requiring $\delta(H_i) \geq (\delta + \mu)n$ for each $i$.
    
For $k$-graphs $H_1, H_2$ on common vertex set $V$, a \emph{color covering homomorphism} from $F$ to $(H_1, H_2)$ is an injection $\phi: V(F) \to V$ where for some $e \in E(F)$, $\phi(e) \in E(H_1)$ and $\phi(e') \in E(H_2)$ for all $e' \neq e$ in $E(F)$.
The \emph{color covering threshold} $\delta^c_F$ requires that every such pair with $\delta(H_1), \delta(H_2) \geq (\delta + \mu)n$ admits such a homomorphism.

The derivation uses Lang's fundamental equivalence:

\begin{theorem}[\cite{Lang}, Theorem 5.11]\label{thm:lang}
For all $k$-graph $F$, we have $\delta^r_F = \max\{\delta^c_F, \delta_F\}$.
\end{theorem}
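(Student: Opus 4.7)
The plan is to prove both inequalities $\delta^r_F \geq \max\{\delta^c_F, \delta_F\}$ and $\delta^r_F \leq \max\{\delta^c_F, \delta_F\}$ separately; the reverse inequality is the main content.

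\textbf{Lower bound.} I would verify two extremal constructions. To see $\delta^r_F \geq \delta_F$, given any $\delta < \delta_F$ choose, for arbitrarily large $n$ with $\nu(F) \mid n$, an $n$-vertex $k$-graph $H$ with $\delta(H) \geq (\delta + \mu) n$ admitting no perfect $F$-tiling, and set $H_1 = \cdots = H_{e(F) n / \nu(F)} = H$; any perfect rainbow $F$-tiling in $\mathcal{H}$ is, in particular, a perfect $F$-tiling in $H$, which is a contradiction. To see $\delta^r_F \geq \delta^c_F$, given any $\delta < \delta^c_F$ pick a pair $(H_1^*, H_2^*)$ of $n$-vertex $k$-graphs on a common vertex set $V$ with $\delta(H_i^*) \geq (\delta + \mu) n$ admitting no color covering homomorphism from $F$; form $\mathcal{H}$ consisting of exactly one copy of $H_1^*$ together with $e(F) n/\nu(F) - 1$ copies of $H_2^*$. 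In any perfect rainbow $F$-tiling, exactly one tile $T^*$ is assigned the unique $H_1^*$-color, so the defining injection $V(F) \to V(T^*)$ places exactly one edge of $F$ into $H_1^*$ and the remaining $e(F) - 1$ edges into $H_2^*$, which is precisely a color covering homomorphism, contradicting the choice of the pair.

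\textbf{Upper bound.} Set $\delta = \max\{\delta^c_F, \delta_F\} + \mu$ and consider $\mathcal{H}$ with $\delta(H_i) \geq \delta n$ for all $i$. I would execute the absorption scheme adapted to the rainbow setting, with two ingredients. First, a \emph{rainbow absorbing structure}: a partial rainbow $F$-tiling $\mathcal{A}$ covering a vertex set $A$ of size at most $\gamma n$ and using a color set $\mathcal{C}_A$ of the matching size $e(F) |A|/\nu(F)$, with the property that for every vertex set $U \subseteq V \setminus A$ with $|U| \leq \eta n$ and every compatible set of unused colors $\mathcal{C}_U$ with $|\mathcal{C}_U| = e(F) |U|/\nu(F)$, the restriction of the subfamily indexed by $\mathcal{C}_A \cup \mathcal{C}_U$ to $A \cup U$ admits a perfect rainbow $F$-tiling. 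The construction follows the Montgomery/Nenadov--Pehova blueprint but uses \emph{color-aware absorbers}: for each pair $(v,c)$ consisting of a vertex and a color, a small sub-configuration that admits two rainbow $F$-tilings differing precisely by trading $v$ (plus one ``reservoir'' vertex) in or out while simultaneously trading $c$ (plus one reservoir color) in or out. The existence of these local gadgets is exactly what $\delta^c_F$ provides, since a color covering homomorphism is by definition a copy of $F$ that differs from another by the color assigned to a single edge. Second, a \emph{near-perfect rainbow tiling} on $V \setminus A$ with colors outside $\mathcal{C}_A$: form the auxiliary $k$-graph $H^\star$ whose edges are those $k$-sets $e$ lying in at least a $(1 - o(1))$-fraction of the remaining $H_i$. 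The hypothesis forces $\delta(H^\star) \geq (\delta_F + \mu/2)|V \setminus A|$, so by $\delta_F$ one obtains a near-perfect $F$-tiling of $H^\star$; a rainbow color assignment to its tiles is then produced by Pippenger--Spencer on the auxiliary hypergraph whose hyperedges encode compatible (copy, color-assignment) pairs. Finally, the resulting $o(n)$-vertex and $o(n)$-color deficiency is swallowed by $\mathcal{A}$.

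\textbf{Main obstacle.} The principal difficulty lies in constructing the rainbow absorber $\mathcal{A}$. In the monochromatic setting one needs only that two local configurations differ in a single vertex; here each absorber must simultaneously accept one vertex and one color, and the global structure must robustly handle an arbitrary $o(n)$ combination of vertex and color deficiencies. The right abstraction is a reachability relation on (vertex, color) pairs, where two pairs are reachable if some small sub-configuration can swap them; one must show, via $\delta^c_F$, that this reachability is dense enough for a Montgomery-type robust matching argument to succeed. The color covering threshold translates into positive linear minimum degree in this auxiliary reachability graph, which then drives the whole construction exactly as in the analogous closure arguments of Lemmas~\ref{lem:absorbing}--\ref{closed} above.
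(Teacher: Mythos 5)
First, a point of scope: the paper does not prove this statement at all --- Theorem~\ref{thm:lang} is imported verbatim from Lang~\cite{Lang} (his Theorem~5.11) and is used as a black box in Section~\ref{sec:rainbow}, so there is no in-paper proof to compare your argument against. Judged on its own merits, your lower bound is correct and standard: taking all $H_i$ equal to a tiling-extremal host gives $\delta^r_F \ge \delta_F$, and taking one copy of $H_1^*$ plus $e(F)n/\nu(F)-1$ copies of $H_2^*$ gives $\delta^r_F \ge \delta^c_F$, since the unique tile carrying the $H_1^*$-coloured edge is exactly a colour covering homomorphism.

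The upper bound, however, has a genuine gap at its two load-bearing steps. (1) Your auxiliary graph $H^\star$, consisting of $k$-sets lying in a $(1-o(1))$-fraction of the remaining $H_i$, need not inherit the codegree condition: for a fixed $(k-1)$-set $S$ the neighbourhoods $N_{H_i}(S)$ can be spread over disjoint sets (e.g.\ half the colours see $N(S)=A$, half see $N(S)=B$ with $A\cap B=\emptyset$), so $\delta(H^\star)$ can be $0$ even though every $\delta(H_i)\ge n/2$; the claim ``the hypothesis forces $\delta(H^\star)\ge(\delta_F+\mu/2)|V\setminus A|$'' is false. One can salvage a codegree of roughly $(\delta-c)n$ by lowering the popularity threshold to a small constant fraction $c$, but then each tile's edges are only guaranteed to lie in a $c$-fraction of the colour classes, and the subsequent rainbow colour assignment becomes a nontrivial matching problem rather than an afterthought. (2) The rainbow absorber is asserted rather than constructed: a colour covering homomorphism is a single copy of $F$ with one prescribed edge in $H_1$ and the rest in $H_2$, not ``two tilings differing in the colour of one edge,'' and passing from the mere existence of such homomorphisms (for pairs meeting the threshold) to linearly many well-spread vertex-and-colour gadgets that can absorb an \emph{arbitrary} $o(n)$-set of leftover colours requires supersaturation plus a closure/lattice analysis in the joint (vertex, colour) space --- this is precisely the substantive content of Lang's proof, and your sketch replaces it with the unverified claim that $\delta^c_F$ yields ``positive linear minimum degree'' in the reachability auxiliary graph (note $\delta^c_F$ may be $0$, as it is for $T_k$). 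As written, the proposal proves the easy inequality and outlines, but does not establish, the hard one.
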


\begin{proof}[{\bf Proof of Theorem~\ref{thm:rainbow}}]
First we establish $\delta_{T_k}^\mathrm{c} = 0$. Consider $k$-graphs $H_1, H_2$ on $n \geq 2k-1$ vertices with $\delta(H_i) \geq k$. Fix an edge $e = v_1\cdots v_k \in E(H_1)$. By minimum codegree condition, there exists $v_{k+1}$ such that $v_1\cdots v_{k-1}v_{k+1} \in E(H_2)$ and $v_{k+2}, \dots, v_{2k-1}$ such that $v_kv_{k+1}\cdots v_{2k-1} \in E(H_2)$.
Then $\{v_1\cdots v_k, v_1\cdots v_{k-1}v_{k+1}, v_kv_{k+1}\cdots v_{2k-1}\}$ forms a copy of $T_k$ where $e$ is covered by $H_1$ and all other edges covered by $H_2$
providing the required colour covering homomorphism.

By Theorem~\ref{thm:main} and the extremal construction $H_\mathrm{ext}$, we have $\delta_{T_k} = \frac{2}{2k-1}$. Thus by Theorem~\ref{thm:lang}, 
\[
\delta^r_{T_k} = \max\left\{0, \frac{2}{2k-1}\right\} = \frac{2}{2k-1}.
\]
The definition of $\delta^r_{T_k}$ then implies Theorem~\ref{thm:rainbow}.
\end{proof}

\section{Concluding remarks}
In this paper, for all $k \geq 3$, we determine the optimal minimum codegree threshold for perfect $T_k$-tilings in $k$-graphs. Our proof uses the lattice-based absorption method, as is usual, but develops a unified and effective approach to build transferrals for all uniformities. 
To the best of our knowledge, for general $k$, our work establishes the first optimal minimum codegree condition guaranteeing perfect $F$-tilings for non-$k$-partite $k$-uniform hypergraphs $F$. Also, we believe that our work can provide strong support for resolving perfect $F$-tilings in the case when $F$ is a non-$k$-partite $k$-graph.

Additionally, we establish an asymptotically tight minimum codegree threshold for the rainbow variant of the problem.
We conjecture that the bound in Theorem~\ref{thm:rainbow} can be strengthened to $\delta(H_i) \geq \frac{2}{2k-1}n + c$ for some absolute constant $c$. However, our method inherits a limitation from Lang's theorem~\cite{Lang}, which requires asymptotic minimum degree bounds rather than exact ones.

\appendix
\section{Proof of Lemma \ref{bip-temp}}\label{lem3.2}
Adapt with Nenadov's proof for Lemma 2.3 in \cite{NP}, the proof of Lemma \ref{bip-temp} is based on ideas of Montgomery \cite{Montgo} and relies on the existence of ``robust'' sparse bipartite graphs given by the following lemma.
\begin{lemma}
\emph{ \cite[Lemma 2.8]{Montgo}; \cite[Lemma 2.3]{NP}}\label{robust}
Let~$\beta>0$ be given.  There exists $m_0$ such that the following holds for every $m\geq m_0$. There exists a bipartite graph $B_m$ with vertex classes $X_m\cup Y_m$ and $Z_m$ and maximum degree $\Delta(B_m)\leq40$, such that $|X_m|=m+\beta m$, $|Y_m| =2m$ and $|Z_m| = 3m$, and for every subset $X_m'\subseteq X_m$ with $|X_m'|=m$, the induced graph $B_m[X_m'\cup Y_m, Z_m]$ contains a perfect matching.
\end{lemma}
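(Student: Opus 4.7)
The proof follows Montgomery's probabilistic construction. I would take $B_m$ to be a random bipartite graph on vertex classes $(X_m \cup Y_m, Z_m)$, obtained by including each potential edge independently with probability $p = C/m$ for a constant $C = C(\beta)$ to be specified. The task is then to show that, with positive probability, $B_m$ simultaneously satisfies $\Delta(B_m) \leq 40$ and the robust matching property. Treating these as two separate bad events, each of probability less than $1/2$, suffices.

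\textbf{Degree bound.} Each vertex in $X_m \cup Y_m$ has expected degree $3C$, and each vertex in $Z_m$ has expected degree $(3+\beta)C$. By a Chernoff bound, the probability that a given vertex has degree exceeding $40$ is at most $\exp(-\Omega(1))$ for $C$ taken small enough (say $C \leq 5$). A union bound over the $O(m)$ vertices shows $\Delta(B_m) \leq 40$ with probability $1-o(1)$, or alternatively one can delete the (at most $o(m)$) violating vertices in a cleanup phase and rescale.

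\textbf{Robust Hall condition.} For any fixed $X_m' \subseteq X_m$ with $|X_m'| = m$, the bipartite graph $B_m[X_m' \cup Y_m, Z_m]$ is balanced ($|X_m' \cup Y_m| = |Z_m| = 3m$), so by Hall's theorem it has a perfect matching iff $|N_{B_m}(S) \cap Z_m| \geq |S|$ for every $S \subseteq X_m' \cup Y_m$. Collapsing the quantifier over $X_m'$, the robust matching property is equivalent to the single expansion condition
\begin{equation*}
|N_{B_m}(S) \cap Z_m| \;\geq\; |S| \qquad \text{for every } S \subseteq X_m \cup Y_m \text{ with } |S \cap X_m| \leq m.
\end{equation*}
(For the forward direction, choose $X_m'$ to contain $S \cap X_m$; for the backward direction, note that any $S' \subseteq X_m' \cup Y_m$ automatically satisfies $|S' \cap X_m| \leq m$.) I would verify this expansion by splitting on $|S|$: for small $|S|$ (say $|S| \leq 3m/2$), a vertex of $Z_m$ fails to lie in $N(S)$ with probability $(1-p)^{|S|} \leq e^{-C|S|/m}$, and Chernoff concentration of $|N(S) \cap Z_m|$ around its mean $3m(1 - e^{-C|S|/m})$ beats the $\binom{(3+\beta)m}{|S|}$-sized union bound provided $C$ is a sufficiently large constant; for large $|S|$ ($|S| > 3m/2$), a failure of Hall translates to the complement set $R := Z_m \setminus N(S)$ having $|R| > 3m - |S|$ many vertices each avoiding all of $S$, and the symmetric expansion argument applied to $R$ on the $X_m \cup Y_m$ side produces a contradiction.

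\textbf{Main obstacle.} The principal technical difficulty is calibrating $C$ so that it is simultaneously small enough to maintain $\Delta(B_m) \leq 40$ via Chernoff concentration and large enough to drive the expansion estimate in both regimes past their respective union bounds. This trade-off is somewhat tight (the Chernoff tail for the degree being $\leq 40$ is delicate when the mean approaches that threshold), and handling it cleanly typically requires a cleanup step deleting high-degree vertices together with a slight over-provisioning of sizes. The careful bookkeeping carried out in Montgomery's original proof shows that a single $C$ depending only on $\beta$ works for all sufficiently large $m$.
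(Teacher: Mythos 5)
Your reduction of the robust matching property to the single expansion condition (via Hall's theorem and the observation that $|X_m'\cup Y_m|=|Z_m|=3m$) is fine, but the probabilistic model you chose cannot deliver the two required properties, and the tension is not a delicate calibration of $C$ --- it is an impossibility. With i.i.d.\ edges of probability $p=C/m$ for a constant $C$, every degree is essentially Poisson with constant mean, so the maximum degree over $\Theta(m)$ vertices is w.h.p.\ of order $\log m/\log\log m$, and your union bound for ``$\Delta(B_m)\le 40$ w.h.p.'' does not hold (the per-vertex failure probability is a positive constant, so there are w.h.p.\ linearly many violating vertices, not $o(m)$). Worse, the matching property itself fails w.h.p.\ in this model: the expected number of vertices of $Z_m$ with no neighbour in $Y_m$ is about $3me^{-2C}=\Theta(m)$, and any such vertex $z$ has at most $O(\log m)$ (in the intended construction at most $40$) neighbours in $X_m$, so an adversary can choose $X_m'$ of size $m$ avoiding all of them, leaving $z$ isolated from $X_m'\cup Y_m$ and killing the perfect matching. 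Equivalently, the singleton case of your small-set union bound already fails: for $|S|=s$ constant the failure probability is $e^{-\Theta(Cs)}$ against $\binom{(3+\beta)m}{s}\approx (cm/s)^s$ sets, which forces $C=\Omega(\log m)$, incompatible with any constant degree bound. Your proposed ``cleanup'' cannot repair this: deleting high-degree or badly-expanding vertices changes $|Z_m|$ (or $|Y_m|$, $|X_m|$) away from the exact sizes $3m$, $2m$, $(1+\beta)m$ that the statement requires, and a perfect matching of the $3m$-set $X_m'\cup Y_m$ into $Z_m$ needs $|Z_m|$ exactly $3m$; moreover no deletion can create the missing expansion of small sets.

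For context, the paper does not prove this lemma at all; it is quoted verbatim from Montgomery and Nenadov--Pehova and used as a black box in Appendix A. The known proofs avoid your obstruction by working in a random model in which bounded degree (and in particular positive minimum degree) is built in by construction --- e.g.\ each vertex receives a fixed constant number of randomly chosen neighbours, or one takes the union of boundedly many random perfect matchings between suitably sized sets --- and then the Hall/expansion condition is verified for that model, where the union bounds close because small sets expand deterministically at the level of minimum degree and only mid-sized sets need probabilistic estimates. If you want to salvage your write-up, you should switch to such a bounded-degree random model rather than the binomial one.
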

With this in mind, we are ready to prove Lemma \ref{bip-temp}.
From the assumption that for every $S \in \binom{V(H)}{s}$ there are $\gamma n$ vertex-disjoint $(F,t)$-absorbers, we conclude that for every vertex $v \in V(H)$ there is a family of at least $\gamma n/s$ copies of $F$ which contain $v$ and are otherwise vertex-disjoint. Indeed, given $v$ and copies $F_1, \dots, F_k$ of $F$ containing $v$ that are otherwise vertex-disjoint, take any set $S$ of size $s$ which contains $v$ and is otherwise disjoint from $\bigcup_i F_i$. Then as long as $k < \gamma n/(s-1)$, there exists an $(F,t)$-absorber $A_S$ disjoint from $\bigcup_i F_i$. From the perfect $F$-tiling of $S \cup A_S$, take $F_{k+1}$ to be the copy of $F$ which contains $v$. Continuing in a similar fashion, we obtain a family $\mathcal{F}_v = \{F_i - \{v\}\}_i$ which has size at least $\gamma n/s$, and all sets in this family are pairwise vertex-disjoint.

Choose a subset $X \subseteq V(H)$ by including each vertex of $H$ with probability $q = \gamma/(500 s^2 t)$. The parameter $q$ is chosen such that the calculations work and for now it is enough to remember that $q$ is a sufficiently small constant. A simple application of Chernoff's inequality and a union bound show that with positive probability $nq/2 \leq |X| \leq 2nq$ and for each vertex $v \in V(G)$, at least $q^{s-1} |\mathcal{F}_v|/2$ sets from $\mathcal{F}_v$ are contained in $X$. Therefore, there exists one such $X$ for which these properties hold. Let us denote the family of sets from $\mathcal{F}_v$ completely contained in $X$ by $\mathcal{F}'_v$.

Set $\beta = q^{s-1}\gamma/(4s)$ and $m = |X|/(1 + \beta)$. Note that $m = \Omega_{q,s,\gamma}(n)$. Let $B_m$ be a graph given by Lemma \ref{robust}. Choose disjoint subsets $Y, Z \subseteq V(H) \setminus X$ of size $|Y| = 2m$ and $|Z| = 3m(s-1)$ and arbitrarily partition $Z$ into subsets $\mathcal{Z} = \{Z_i\}_{i \in [3m]}$ of size $s-1$. Take any injective mapping $\varphi_1: X_m \cup Y_m \to X \cup Y$ such that $\varphi_1(X_m) = X$, and any injective $\varphi_2: Z_m \to Z$. We claim that there exists a family $\{A_e\}_{e \in B_m}$ of pairwise disjoint $(\le s^2t)$-subsets of $V(G) \setminus (X \cup Y \cup Z)$ such that for each $e = \{w_1, w_2\} \in B_m$, where $w_1 \in X_m \cup Y_m$ and $w_2 \in Z_m$, the set $A_e$ is $(F, t)$-absorber.

Such a family can be chosen greedily. Suppose we have already found desired subsets for all the edges in some $E_0 \subset B_m$. These sets, together with $X \cup Y \cup Z$, occupy at most
\begin{equation*}
\begin{aligned}
|X| + |Y| + |Z| + s^2t|E_0| & < 4m + 3m(s-1) + s^2t \cdot 40|Z_m| \\ 
&\leq 4sm + 120s^2tm \leq 124s^2tm \\ 
& < 250s^2tnq \leq \gamma n/2
\end{aligned}
\end{equation*}
vertices in $H$. Choose arbitrary $e = \{w_1, w_2\} \in B_m \setminus E_0$. As there are $\gamma n$ vertex-disjoint $(F, t)$-absorbers, there are at least $\gamma n/2$ ones which do not contain any of the previously used vertices. Pick any and proceed.

We claim that
\[
A = X \cup Y \cup Z \cup \left( \bigcup_{e \in B_m} A_e \right)
\]
has the $(F,\xi)$-absorbing property for $\xi = \beta/(s-1)$. Consider some subset $R \subseteq V(H) \setminus A$ such that $|R| + |A| \in s\mathbb{Z}$ and $|R| \leq \xi m$. As
\[
|\mathcal{F}'_v| > q^{s-1} \gamma n/(2s) = 2\beta n = 2\xi n(s-1) > 2|R|(s-1)
\]
we can greedily choose a subset $A_v \in \mathcal{F}'_v$ for each $v \in R$ such that all these sets are pairwise disjoint (recall each set in $\mathcal{F}'_v$ is of size $s-1$ and forms a copy of $F$ with $v$). This takes care of vertices from $R$ and uses exactly $|R|(s-1) \leq \beta m$ vertices from $X$. Note that $|A| + |R| \in s\mathbb{Z}$ implies that $|R| + \beta m \in s\mathbb{Z}$, so $|X| - |R|(s-1) - m \in s\mathbb{Z}$, thus we can cover the remaining vertices from $X$ with vertex-disjoint copies of $F$ such that there are exactly $m$ vertices remaining. Again, $|\mathcal{F}'_v| > 2\beta n > 2\beta m$ implies that such copies of $F$ can be found in a greedy manner.

Let $X'$ denote the remaining vertices from $X$ and set $X'_{m} = \varphi_1^{-1}(X')$. By Lemma \ref{robust} there exists a perfect matching $M$ in $B_m$ between $X'_{m} \cup Y_m$ and $Z_m$. For each edge $e = \{w_1, w_2\} \in M$ take an perfect $F$-tiling in $H[\varphi_1(w_1) \cup \varphi_2(w_2) \cup A_e]$ and for each $e \in B_m \setminus M$ take an perfect $F$-tiling in $H[A_e]$. All together, this gives an perfect $F$-tiling of $H[A \cup R]$.

\section{Proof of Lemma \ref{close}}\label{lem3.3}
For positive integers $s, t$ with $s\ge 3$ and $\beta>0$, let $H,F,\mathcal{P}$ and $r$ be given as in the assumption. For any $s$-subset $S\subset V(H)$ with $\textbf{i}_{\mathcal{P}}(S)$ being $(F,\beta)$-robust, we greedily construct as many pairwise disjoint $(F,t)$-absorbers for $S$ as possible. Let $\mathcal{A}=\{A_1,A_2,\ldots, A_{\ell}\}$ be a maximal family of $(F,t)$-absorbers constructed so far. Suppose to the contrary that $\ell<\frac{\beta}{k^3t}n$.
Then $|\bigcup_{\mathcal{A}}A_i| \le \tfrac{\beta}{k} n $ as each such $A_i$ has size at most $k^2t$. \medskip

By $(F,\beta)$-robustness, we can pick a copy of $F$ inside $V(H)\setminus(\bigcup_{\mathcal{A}}A_i\cup S)$ whose vertex set $T$ has the same index vector as $S$. Let $S=\{s_1,s_2,\ldots,s_s\}$ and $T=\{t_1, t_2,\dots, t_s\}$ such that $s_i$ and $t_i$ belong to the same part of $\mathcal{P}$ for each $i\in [s]$. We now greedily pick up a collection $\{S_1,S_2,\ldots, S_{s}\}$ of vertex disjoint subsets in $V(H)\setminus (\bigcup_{\mathcal{A}}A_i\cup S\cup T)$ such that each $S_i$ is an $F$-connector for $s_i,t_i$ with $|S_i|\le st-1$. Since
\[\left|\bigcup_{i=1}^\ell A_i\cup\left(\bigcup_{i=1}^{s'}S_i\right)\cup S\cup T\right|\le\beta n,\]
for any $0\leq s'\leq s$ (using that $n$ is sufficiently large), we can pick each such $S_i$ one by one because $s_i$ and $t_i$ are $(F,\beta n,t)$-reachable.
At this point, it is easy to verify that $\bigcup_{i=1}^{k}S_i\cup T$ is actually an $(F,t)$-absorber for $S$, contrary to the maximality of $\ell$.

\section{Proof of Lemma \ref{partition}}\label{lem3.4}

We shall make use of the following subtle observation in the proof of Lemma \ref{partition}.
\begin{fact}\label{fact:concatenate}
Let $H$ be a graph of $n$ vertices and let $F$ be a graph of $k$ vertices.
For two vertices $u, w\in V(G)$, if there exists $m_1$ vertices $v\in V(G)$ which are $(F, m_2, t)$-reachable to both $u$ and $w$, respectively, then $u$ and $w$ are $(F, m, 2t)$-reachable, where $m= \min\{m_1-1, m_2-kt\}$.
\end{fact}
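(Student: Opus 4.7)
The strategy is a standard ``concatenation of connectors'' argument: to witness $(F, m, 2t)$-reachability of $u$ and $w$, I will glue together an $F$-connector between $u$ and an intermediate vertex $v$ with an $F$-connector between $v$ and $w$, picking $v$ from the large pool of vertices that are simultaneously reachable to both endpoints.

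Fix an arbitrary set $W \subseteq V(H)$ with $|W| \le m$. Let $\mathcal{V}$ denote the set of $m_1$ vertices $v$ that are $(F, m_2, t)$-reachable to both $u$ and $w$; the only vertices I must avoid for $v$ are those in $W$, so the condition $m \le m_1-1$ guarantees the existence of some $v \in \mathcal{V} \setminus W$ (and in particular $v \notin \{u,w\}$). Next, since $u$ and $v$ are $(F, m_2, t)$-reachable, apply this to the forbidden set $W \cup \{w\}$, whose size is at most $m+1 \le m_2 - kt + 1 \le m_2$, to obtain an $F$-connector $A \subseteq V(H) \setminus (W \cup \{w\})$ for $u$ and $v$ with $|A| \le kt - 1$. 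Finally, since $v$ and $w$ are $(F, m_2, t)$-reachable, apply this to the forbidden set $W \cup A \cup \{u\}$, whose size is at most $m + (kt-1) + 1 \le m_2$, to obtain an $F$-connector $B \subseteq V(H) \setminus (W \cup A \cup \{u\})$ for $v$ and $w$ with $|B| \le kt - 1$.

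Now set $T := A \cup B \cup \{v\}$. By construction $T \cap W = \emptyset$ and $|T| \le 2(kt-1) + 1 = 2kt - 1$, which matches the upper bound $s \cdot (2t) - 1$ appearing in the definition of $(F, m, 2t)$-reachability (with $s = k$). Moreover, $H[A \cup \{u\}]$ and $H[B \cup \{v\}]$ both have perfect $F$-tilings, so their (disjoint) union yields a perfect $F$-tiling of $H[T \cup \{u\}] = H[A \cup \{u\} \cup B \cup \{v\}]$; symmetrically, using the perfect $F$-tilings of $H[A \cup \{v\}]$ and $H[B \cup \{w\}]$, we obtain a perfect $F$-tiling of $H[T \cup \{w\}]$. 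Thus $T$ is an $F$-connector for $u$ and $w$ avoiding $W$, proving that $u$ and $w$ are $(F, m, 2t)$-reachable.

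The only thing to be careful about is the bookkeeping of sizes in the two applications of reachability, to ensure that the forbidden sets fit inside the tolerance $m_2$; this is precisely what forces the bound $m \le m_2 - kt$. There is no real obstacle here beyond this arithmetic, which is why the statement is labelled a fact rather than a lemma.
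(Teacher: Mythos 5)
Your proof is correct. The paper states Fact~\ref{fact:concatenate} without proof, treating it as an immediate observation, so there is no argument in the paper to compare against; your concatenation argument is the natural one. The size bookkeeping is right: avoiding $W$ forces $m\le m_1-1$ to find a usable intermediate $v$, the first application of reachability requires $|W\cup\{w\}|\le m_2$, and the second (more demanding) application requires $|W\cup A\cup\{u\}|\le m+kt\le m_2$, which is exactly where $m\le m_2-kt$ is used. The resulting $T=A\cup B\cup\{v\}$ has size at most $2kt-1$, matching the bound $s\cdot(2t)-1$ with $s=k$, and the two claimed perfect $F$-tilings of $H[T\cup\{u\}]$ and $H[T\cup\{w\}]$ decompose correctly into the tilings furnished by $A$ and $B$. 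The only point worth flagging is the parenthetical ``$v\notin\{u,w\}$'': this follows from the convention that reachability is defined between distinct vertices (as the paper's usage, e.g.\ ``reachable to at least $\delta n$ \emph{other} vertices'', confirms), but even without it you could simply forbid $\{u,w\}$ as well at the cost of replacing $m_1-1$ by $m_1-3$, which is immaterial.
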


Let $r_0 = \lceil1/\delta\rceil+1$ and  choose constants $0<\beta_{r_0+1}\ll \beta_{r_0}\ll\beta_{r_0-1} \ll \cdots \ll \beta_1=\delta$. Furthermore, fix $\beta=\beta_{r_0+1}$ and $t=2^{r_0}$.
Assume that there are two vertices that are not $(F, \beta_{r_0} n, 2^{r_0-1})$-reachable, as otherwise we just output
$\mathcal P=\{V(H)\}$ as the desired partition. 
Observe  also that every set of $r_0$ vertices must contain two vertices that are $(F, \beta_2n, 2)$-reachable to each other.
Indeed,
fixing some arbitrary set of vertices $S=\{v_1,\ldots,v_{r_0}\}\subset V(H)$,
by the Inclusion-Exclusion principle we have that there exists a pair $i\neq j\in [r_0]$ so that  both $v_i$ and $v_j$ are 
both $(F, \beta_1 n, 1)$-reachable to a set of at least $\delta' n$  vertices for some $\delta'\ge \tfrac{\delta}{\binom{r_0}{2}}$.
Thus, by Fact~\ref{fact:concatenate}, we deduce that $v_i$ and $v_j$ are $(F, \beta_2 n, 2)$-reachable for as  $\beta_2\ll \beta_1=\delta$.\medskip

To ease the notation, we write $\lambda_i=\beta_{r_0+2-i}$ and $t_i = 2^{r_0+1-i}$ for each $i\in [1,r_0+1]$. Therefore \[0<\lambda_1\ll \lambda_2\ll \lambda_3\cdots\ll\lambda_{r_0+1}=\beta_1.\] Let $d$ be the largest integer with $2\le d \le r_0$  such that there are $d$ vertices $v_1,\dots, v_d$ in $H$ which are pairwise \emph{not} $(F, \lambda_d n, t_d)$-reachable. Note that $d$ exists and $2\le d\le r_0-1$. Indeed, we assumed above that there are  $2$ vertices that are not $(F,\lambda_2n,t_2)$-reachable and  if $d= r_0$, then there are $r_0$ vertices in $H$ which are pairwise not $(F, \beta_{2} n, 2)$-reachable, contrary to the observation above. Let $S=\{v_1, \dots, v_d\}$ be such a set of vertices and note that  we know that $v_1, \dots, v_d$ are also pairwise not $(F, \lambda_{d+1} n, t_{d+1})$-reachable.\medskip

For a vertex $v$ we write $\tilde{N}_i(v)$ for the set
of vertices which are $(F, \lambda_i n, t_i)$-reachable to $v$. Consider $\tilde{N}_{d+1}(v_i)$ for $i\in [d]$.
Then we conclude that
\begin{description}
  \item[(i)] any vertex $v\in V(G) \setminus \{ v_1, \dots, v_d\}$ must be
in $\tilde{N}_{d+1}(v_i)$ for some $i\in [d]$.
Otherwise, $v, v_1, \dots, v_d$ are pairwise not $(F, \lambda_{d+1} n, t_{d+1})$-reachable, contradicting the maximality of $d$.
  \item[(ii)] $| \tilde{N}_{d+1}(v_i) \cap \tilde{N}_{d+1}(v_j)| \le \lambda_d n+1$ for any $i\not = j$. Otherwise Fact \ref{fact:concatenate} implies that $v_i, v_j$ are $(F, \lambda_d n, t_d)$-reachable to each other, a contradiction (using that  $\lambda_d\ll\lambda_{d+1}$ here).
\end{description}

For $i\in [d]$, let \[U_i = (\tilde{N}_{d+1}(v_i)\cup \{ v_i\})
\setminus \bigcup_{ j\not = i} \tilde{N}_{d+1}(v_j).\]
Then we claim that each $U_i$ is $(F, \lambda_{d+1}n, t_{d+1})$-closed.
Indeed otherwise, there exist $u_1, u_2\in U_i$ that are not
$(F, \lambda_{d+1}n, t_{d+1})$-reachable to each other. Then
$\{ u_1, u_2\} \cup \{ v_1, \dots, v_d\} \setminus \{v_i\}$
contradicts the maximality of $d$.\medskip

Let $U_0 = V(H) \setminus (U_1\cup\dots\cup U_d)$.
We have $|U_0| \le d^2\lambda_d n$ due to {\textbf{(ii)}} above. In order to obtain the desired reachability partition,
we now drop each vertex of $U_0$ back into some $U_i$ for $i\in [d]$ as follows. Since each $v\in U_0$ is $(F, \lambda_{r_0+1}n, t_{r_0+1})$-reachable to at least $\delta n$ vertices, it holds due to the fact that  $\lambda_d\ll \delta$, that
\[|\tilde{N}_{r_0+1}(v)\setminus U_0| \ge \delta n - |U_0|\ge \delta n - d^2\lambda_d n > d\lambda_d n.\]
Therefore there exists some $i \in [d]$ such that $v$ is $(F, \lambda_{r_0+1}n, t_{r_0+1})$-reachable to at least
$\lambda_d n+1$ vertices in $U_i$ and hence $(F, \lambda_{d+1}n, t_{d+1})$-reachable to all these vertices. Fact \ref{fact:concatenate} (using that $\lambda_d\ll\lambda_{d+1}$) implies  that  $v$ is $(F, \lambda_d n, t_d)$-reachable to every vertex in $U_i$.
Now partition $U_0$ as  $U_0=\cup_{i\in d}R_i$ where for each $i\in[d]$,  $R_i$ denotes a set of  vertices $v\in U_0$ that are $(F, \lambda_d n, t_d)$-reachable to every vertex in $U_i$. Again by Fact \ref{fact:concatenate}, for every $i\in [d]$,  every two vertices in $R_i$ are $(F, \lambda_{d-1} n, t_{d-1})$-reachable to each other. Let $\mathcal{P}=\{V_1, \dots, V_d\}$ be the resulting partition by setting $V_i=R_i\cup U_i$. Then each $V_i$ is $(F, \lambda_{d-1} n, t_{d-1})$-closed.
Also, for each $i\in[d]$, it holds that \[|V_i| \ge |U_i| \ge |\tilde{N}_{d+1}(v_i)| - d^2 \lambda_d n
\ge |\tilde{N}_{r_0+1}(v_i)| - \tfrac{\delta}{2} n
\ge  \tfrac{\delta}{2}n.\]
This completes the proof by noting that  $\beta=\beta_{r_0+1}\le\lambda_{d-1}$ and $t=2^{r_0}\ge t_{d-1}$.

\section{Proof of Lemma \ref{transferral}}\label{fulu:transferral}
Let $H$ be an $n$-vertex $k$-graph and $\mathcal{P}=\{V_1, V_2,\ldots,V_r\}$ be a partition of $V(H)$ as in the assumption.
Suppose that there exists a vector $\textbf{v}\in L^{\beta}(\mathcal{P})$ such that $\textbf{v}=\textbf{u}_i-\textbf{u}_j$, where $i\neq j\in [r]$. Without loss of generality (relabelling if necessary), we may assume that $i = 1$ and $j = 2$. Then we may denote $\textbf{v}=\sum_{l=1}^pa_l \textbf{s}_l$, where $a_l\in \mathbb{Z}$, $\textbf{s}_l=(s_l^1,s_l^2,\ldots, s_l^r)\in I^{\beta}(\mathcal{P})$ for every $l\in[p]$.  
It suffices to show that there exists a constant $C=C(F,r)$ such that every two vertices $x\in V_1$ and $y\in V_2$ are $\left(F,\tfrac{\beta}2n, Cst\right)$-reachable. Let $x\in V_1$ and $y\in V_2$ be any two vertices. Fix some vertex set $W\subset V(H)\setminus \{x,y\}$ of size at most $ \tfrac{\beta}{2}n$. 
By the assumption of $(F,\beta)$-robustness, we can pick $\sum_{l=1}^p|a_l|$ vertex-disjoint copies $F_{\textbf{s}_1}^1,\dots,F_{\textbf{s}_1}^{|a_1|},\cdots,F_{\textbf{s}_p}^1,\dots,F_{\textbf{s}_p}^{|a_p|}$ of $F$ in $V(H)\setminus (W\cup\{x,y\})$ whose corresponding vertex set $V(F_{\textbf{s}_l}^j)$ has index vector $\textbf{s}_l$ for every $l\in[p]$ and $j\in[|a_l|]$, respectively. Let $U_{\textbf{v}}=\cup_{l=1}^p \cup_{j=1}^{|a_l|}V(F_{\textbf{s}_l}^j)$ and $U_{\textbf{v}}^i=U_{\textbf{v}}\cap V_i$ for every $i\in[r]$. \medskip

Without loss of generality, assume that $V(F_{\textbf{s}_1}^1)\cap V_1\ne \emptyset$ and $V(F_{\textbf{s}_p}^1)\cap V_2\ne \emptyset$. Note that we can choose $x' \in V(F_{s_1}^1) \cap V_1$ and 
$y' \in V(F_{s_p}^1) \cap V_2$ such that, for every $i \in [r]$, 
the set $U_{\mathbf{v}}^i \setminus \{x', y'\}$ can be partitioned into two equal parts $U_{\mathbf{v}}^i(1)$ and $U_{\mathbf{v}}^i(2)$ with the property that, for every $l \in [p]$ and $j \in [|a_l|]$, there exists 
some $q \in [2]$ such that 
$V(F_{\mathbf{s}_l}^j) \cap V_i \subseteq U_{\mathbf{v}}^i(q),$
and moreover, for each $F_{\mathbf{s}_l}^j$, the same $q$ works for 
all $i \in [r]$.

Let $z_i:=|U_{\mathbf{v}}^i(1)|=|U_{\mathbf{v}}^i(2)|$, $U_{\mathbf{v}}^i(1)=\{u^i_1,\cdots,u^i_{z_i}\}$ and $U_{\mathbf{v}}^i(2)=\{v^i_1,\cdots,v^i_{z_i}\}$ for every $i \in [r]$. Since each $V_i$ is $(F,\beta n, t)$-closed for each $i\in [r]$, we greedily pick a collection $\mathbb{S}^i=\{S^i_1,S^i_2,\ldots, S^i_{z_i}\}$ of vertex disjoint subsets in $V(H)\setminus (W\cup U_{\textbf{v}}\cup\{x,y\})$ such that for each $j\in[z_i]$,  $S^i_j$ is an $F$-connector for $u^i_j,v^i_j$ with $|S^i_j|\le st-1$. Moreover, $\mathbb{S}^i$ is vertex-disjoint from all previously chosen collections. Indeed, note that as $n$ sufficiently large we have that  $|W|\le \beta n- \sum_{i=1}^r z_i st - \sum_{l=1}^p |a_l|s$ and so for any $s'\leq s-1$ we have
\[
\left|\left(\bigcup_{i=1}^r\bigcup_{j=1}^{s'}S^i_j\right)\cup W\cup U_{\textbf{v}}\cup\{x,y\}\right|\le \beta n.
\] 
Therefore, we can indeed pick the $S^i_j$ one by one since $u^i_j$ and $v^i_j$ are $(\beta n ,t)$-reachable. Similarly, we additionally choose two vertex-disjoint (from each other and all other previously chosen vertices) $F$-connectors, say $S_x$ and $S_y$, for $x,x'$ and $y,y'$, respectively. At this point, it is easy to verify that the subset $\hat S:=\bigcup_{i=1}^{r}\bigcup_{j=1}^{z_i}S^i_j\cup S_x\cup S_y\cup \big(\bigcup_{i=1}^p\bigcup_{j=1}^{|a_i|} V(F_i^j)\big)$ is actually an $F$-connector for $x,y$ with size at most $\sum_{i=1}^r z_i st + \sum_{l=1}^p |a_l|s + 2st \leq Cs^2t -1$. 
Suppose that we would like to show that there exists a perfect $F$-tiling in $H[\hat S \cup \{x\}] $ (leaving $y$ uncovered). We may assume that $V(F_{s_1}^1)\setminus{\{x'\}} \cap V_1\subseteq U_{\mathbf{v}}^1(1) $, then we can take the perfect $F$-tilings in $H[S_x\cup\{x\}]$, $H[S_y\cup\{y'\}]$ and $H[S^i_j\cup\{v^i_j\}]$ for all $i\in[r]$ and $j\in [z_i]$, as well as the copies of $F$ on $\bigcup_{i=1}^r U_{\mathbf{v}}^i(1)$. Similarly, we can show that there exists a perfect $F$-tiling in $H[\hat S \cup \{y\}]$. Hence, $x$ and $y$ are $\left(F,\tfrac{\beta}{2}n, Cst\right)$-reachable.

\end{document}